\newcommand{\LL}{\mathcal  {L}}
\newcommand{\FF}{\mathcal  {F}}
\newcommand{\MM}{\mathcal  {M}}
\newcommand{\NN}{\mathcal  {N}}
\DeclareFontFamily{OMS}{rsfs}{\skewchar\font'60}
\DeclareFontShape{OMS}{rsfs}{m}{n}{<-5>rsfs5 <5-7>rsfs7 <7->rsfs10 }{}
\DeclareSymbolFont{rsfs}{OMS}{rsfs}{m}{n}
\DeclareSymbolFontAlphabet{\scr}{rsfs}
\newtheorem{theorem}{Theorem}[section]
\newtheorem{lemma}[theorem]{Lemma}
\newtheorem{proposition}[theorem]{Proposition}
\newtheorem{corollary}[theorem]{Corollary}
\theoremstyle{definition}
\newtheorem{definition}[theorem]{Definition}
\newtheorem{example}[theorem]{Example}
\theoremstyle{remark}
\newtheorem{remark}[theorem]{Remark}
\newtheorem{question}[theorem]{Question}
\newcommand{\bQ}{\mathbb{Q}}
\DeclareMathOperator{\Spec}{{Spec}}
\newcommand{\bZ}{\mathbb{Z}}
\newcommand{\tensor}{\otimes}
\DeclareMathOperator{\divisor}{{div}}
\newcommand{\sH}{\scr{H}}
\DeclareMathOperator{\sHom}{{\sH}om}
\newcommand{\blank}{\underline{\hskip 10pt}}
\newcommand{\mydot}{{{\,\begin{picture}(1,1)(-1,-2)\circle*{2}\end{picture}\ }}}
\newcommand{\tld}{\widetilde }
\DeclareMathOperator{\Supp}{{Supp}}
\newcommand{\sL}{\scr{L}}
\newcommand{\bN}{\mathbb{N}}
\newcommand{\bP}{\mathbb{P}}
\DeclareMathOperator{\Proj}{{Proj}}
\newcommand{\sU}{\scr{U}}
\newcommand{\sF}{\scr{F}}
\newcommand{\N}{\mathbb {N}}
\newcommand{\bm}{\mathfrak{m}}
\newcommand{\C}{\mathbb {C}}
\newcommand{\bA}{\mathbb{A}}
\newcommand{\bF}{\mathbb{F}}
\DeclareMathOperator{\Div}{{div}}
\DeclareMathOperator{\Hom}{{Hom}}
\DeclareMathOperator{\Mod}{{mod}}
\newcommand{\cO}{\mathcal{O}}
\begin{document}

\title{Globally $F$-regular and log Fano varieties}
\author{Karl Schwede and Karen E.  Smith}
\thanks{The first author was partially supported by a National Science Foundation postdoctoral fellowship and by RTG grant number 0502170 at the University of Michigan.  He also worked on this paper while a research member at MSRI in winter 2009. The second author was partially supported by NSF grant 0500823.}

\subjclass[2000]{14J45, 13A35, 14B05}

\address{\vskip -.65cm \noindent Karl E.\ Schwede: \sf Department of Mathematics,
  University of Michigan.  Ann Arbor, Michigan 48109-1109}
\email{kschwede@umich.edu}

\address{\vskip -.65cm \noindent Karen E.\ Smith: \sf Department of Mathematics,
  University of Michigan.  Ann Arbor, Michigan 48109-1109}
\email{kesmith@umich.edu}

\maketitle

\begin{abstract}
We prove that every globally $F$-regular variety  is log Fano.  In other words, if a prime characteristic variety  $X$ is globally $F$-regular,  then it admits an effective $\bQ$-divisor $\Delta$  such that $-K_X - \Delta$ is ample and $(X, \Delta)$ has controlled (Kawamata log terminal, in fact globally $F$-regular) singularities.  A weak form of this result can be viewed as a prime characteristic analog of de Fernex and Hacon's new point of view on Kawamata log terminal singularities in the non-$\bQ$-Gorenstein case.
We also prove a converse statement in characteristic zero: every log Fano variety has globally $F$-regular type. Our techniques apply also to $F$-split varieties, which we show to satisfy a  ``log Calabi-Yau" condition. We also prove a Kawamata-Viehweg vanishing theorem for globally $F$-regular pairs.
 \end{abstract}

\section{Introduction}
\label{SectionIntroduction}
Globally $F$-regular varieties are a special type of Frobenius split variety that have particularly nice properties.  Many well-known Frobenius split varieties are in fact globally $F$-regular,  including toric varieties
\cite{SmithGloballyFRegular} and Schubert varieties  \cite{LauritzenRabenThomsenGlobalFRegularityOfSchuertVarieties}.  Globally $F$-regular varieties are {\it locally\/}  $F$-regular, which implies that they are Cohen-Macaulay and  have pseudo-rational singularities (see \cite{SmithGloballyFRegular} and \cite{SmithFRatImpliesRat}).  On the other hand, demanding $F$-regularity {\it globally\/}   on a projective variety
imposes   strong positivity  properties, and implies  Kodaira-type vanishing results.

Globally $F$-regular varieties were first introduced  in  \cite{SmithGloballyFRegular}, where it was
 observed that  Fano varieties in characteristic zero reduce to globally $F$-regular varieties in characteristic $p \gg 0$. Our purpose in this paper is to consider the  extent to which globally $F$-regular varieties are \emph{essentially equivalent to} Fano varieties. Our main prime characteristic result is the following
 precise way in which globally $F$-regular varieties are ``positive:"
\vskip 6pt
\hskip -12pt
{\bf Theorem 1.1 (Corollary of Theorem \ref{MainExistenceTheorem})}
{\it Let $X$ be a globally $F$-regular variety of prime characteristic.  Then there exists an effective $\bQ$-divisor $\Delta$  such that $(X, \Delta)$ is log Fano. Explicitly, this means that
\begin{itemize}
\item[(i)]  $-K_X - \Delta$ is ample.
\item[(ii)]  $(X, \Delta)$ has  Kawamata log terminal singularities.
\end{itemize}}
\vskip 6pt
Although they are usually considered only in characteristic zero,  Kawamata log terminal singularities are defined in all characteristics; see Remark \ref{kltcharp}.

One immediate consequence (already known to experts) of Theorem 1.1  is that if a smooth projective variety is globally $F$-regular, then  its anticanonical divisor  is big.  However,  this theorem  should be viewed as a much stronger positivity statement about $-K_X$.  Indeed, bigness of a divisor means that its numerical class can be  decomposed as an ample class plus an effective class, but in general, one can not expect to find such a decomposition where the effective class is represented by some effective divisor with controlled singularities. Thus,
the assertion that  $-K_X - \Delta$ is ample  with $(X, \Delta)$ having ``nice'' singularities guarantees  that the big divisor  $-K_X$ is not too far from being ample.

To prove Theorem 1.1,  we develop a theory of  global $F$-regularity for \emph{pairs} $(X, \Delta)$,   generalizing the ``absolute" theory of global $F$-regularity introduced in  \cite{SmithGloballyFRegular}.
 The theory of globally $F$-regular pairs simultaneously generalizes and unifies several points of view, so we expect it will be of interest in its own right;  see Remark \ref{related}.
In fact, Theorem 1.1 is a corollary of a much stronger result, Theorem \ref{MainExistenceTheorem}, which states that if $X$ is globally $F$-regular, then there exists $\Delta$ such that the
 pair $(X, \Delta)$  is  \emph{globally $F$-regular} with $K_X + \Delta$  anti-ample. In particular, the pair  is
\emph{locally $F$-regular}{\footnote{meaning (strongly) $F$-regular on affine patches; Definition \ref{def1} and Remark \ref{related}.}}  with $K_X + \Delta$ $\bQ$-Cartier and therefore the pair is Kawamata log terminal by \cite{HaraWatanabeFRegFPure}.

The converse of Theorem 1.1  is false. However, the failure is due to oddities in small characteristic, and the converse does hold for ``large $p$."    In Section 5, we prove the following:
\vskip 6pt
\hskip -12pt
{\bf Theorem 1.2 (Corollary of Theorem \ref{char0converse})}
{\it Every log Fano projective variety of characteristic zero  has globally $F$-regular type.}
\vskip 6pt
Theorem 1.2 is proved by reducing to a corresponding statement about an affine cone  over the projective variety. Indeed, a projective variety is log Fano if and only if the affine cone over it (for some  appropriately chosen embedding in projective space) has Kawamata log terminal singularities; See Proposition \ref{coneoverlogFano}. Because a corresponding statement for globally $F$-regular singularities also holds (see Proposition \ref{Fregsectionring}), the proof of Theorem 1.2 reduces to a local study of  the singularities at the ``vertex of the cone."

A weak form
of Theorem  \ref{MainExistenceTheorem} can be viewed as a prime characteristic analog of de Fernex and Hacon's new idea for defining Kawamata log terminal singularities for {\it non-} $\bQ$-Gorenstein varieties.  According to the dictionary between the singularities in the minimal model program and $F$-singularities,  Kawamata log terminal singularities  correspond  to  locally
 $F$-regular singularities. Because $F$-regular pairs \emph{do not} require the restriction that $K_X + \Delta$ is $\bQ$-Cartier, it is natural to think of $F$-regular singularities as a prime characteristic analog of Kawamata log terminal singularities \emph{without} this $\bQ$-Cartier assumption. On the other hand, de Fernex and Hacon define a (non-$\bQ$-Gorenstein) variety $X$ to be  Kawamata  log terminal if there exists a $\bQ$-divisor $\Delta$ such that $K_X + \Delta$ is $\bQ$-Cartier and $(X, \Delta)$ is
Kawamata log terminal in the usual sense; see \cite[Proposition 7.2]{DeFernexHacon}. The following local version of Theorem \ref{MainExistenceTheorem} suggests that these two approaches may be equivalent.
\vskip 6pt
\hskip -12pt
{\bf Corollary \ref{CorLocalExistenceTheorem}.}
{\it Let $X$ be a normal affine $F$-finite scheme of prime characteristic $p$.
If $X$ is  $F$-regular,   then there exists an effective $\bQ$-divisor $\Delta$  such that the pair $(X, \Delta)$ is  $F$-regular and $K_X + \Delta$ is $\bQ$-Cartier with index not divisible by $p$.}
\hskip -12pt

Just as log canonical singularities can be viewed as a limit of Kawamata log terminal singularities, we introduce global sharp $F$-splitting,  which can be viewed as a limiting version of global $F$-regularity (in  certain restrictive sense).  This notion is a  global analog of the first author's local notion of sharp $F$-purity \cite{SchwedeSharpTestElements}, and can be thought as a generalization of  Frobenius splitting to the context of pairs.
 Theorem 1.1 (as well as many other results in this paper) has an analog for global sharp $F$-splitting: Theorem \ref{MainExistenceTheorem}  states that if $X$ is Frobenius split, then there exists $\Delta$ such that $(X, \Delta)$ is log Calabi-Yau---that is, $K_X + \Delta$ is $\bQ$-trivial and $(X, \Delta)$ is log canonical.  We also expect a converse to hold which would assert that a projective log Calabi-Yau variety (of characteristic zero) has  $F$-split type; however, this statement remains conjectural and is probably quite difficult to prove; see Remark \ref{RemarkLogCYImpliesFSplit}.

In Section 6, we  explore some of the very strong properties of globally $F$-regular varieties.  For example, in Theorem  \ref{TheoremKawamataViehwegForGloballyFRegular}, we  prove a version of Kawamata-Viehweg vanishing for globally $F$-regular pairs, generalizing the Kodaira-type vanishing theorems from \cite{SmithGloballyFRegular}.   We also observe that many images of global $F$-regular varieties are globally $F$-regular; see Proposition \ref{PropositionMehtaRamanathanVersion}.

Many questions about  globally $F$-regular varieties
and related issues remain open. We conclude the paper with some of these in Section 7.

\hskip -12pt{\it Acknowledgments:}

The authors
would like to thank Nobuo Hara, S\'andor Kov\'acs, Shunsuke Takagi, David E. Anderson and Osamu Fujino for many enjoyable and stimulating discussions.  The first author is particularly grateful for some lively and productive discussions with each of J\'anos Koll\'ar and Holger Brenner,
where the germ of this paper began.
 We would also like to thank the referee, Shunsuke Takagi, Kevin Tucker and Tommaso de Fernex for useful comments on an earlier draft of this paper.

\section{Preliminaries}
\label{SectionPositiveCharacteristicPreliminiaries}

All rings in this paper will be assumed to be commutative with unity, and noetherian.  All schemes will be assumed to be noetherian and separated.  Furthermore, except in Section 5 and unless otherwise specified elsewhere, all rings and schemes will be of positive characteristic $p$.

 \subsection{Frobenius}
 The Frobenius map $X \overset{F}\rightarrow X$ will play a major role in our discussion. The Frobenius map is a morphism of schemes defined as the identity map on the underlying topological space of  $X$,    but the corresponding map of structures sheaves  $\mathcal O_X \rightarrow F_*\mathcal O_X$ sends sections to their $p$-th powers.  For any natural number $e$, the notation $F^e$ denotes the $e$-th iterate of the Frobenius map; in particular, $\mathcal O_X \rightarrow F^e_*\mathcal O_X$ sends sections to their $p^{e}$-th powers.

 \begin{definition}
\label{DefnFFinite} A scheme $X$ of prime characteristic $p$ is $F$-finite if the Frobenius map $\mathcal O_X \rightarrow F_*\cO_X$ gives  $F_*\cO_X$ the structure of a finitely generated $\cO_X$-module, that is, if $F_*\cO_X$ is coherent.   A ring $R$ is called $F$-finite if $\Spec R$ is $F$-finite.
\end{definition}

\begin{remark}
Any scheme essentially of finite type over a field with finite $p$-base (for example, over any perfect field)  is $F$-finite.
\end{remark}

\subsection{Divisors} To fix notation,
we  review the basics of divisors and reflexive sheaves on normal  schemes.   Because normal schemes  are disjoint unions of their irreducible components,  there is no loss of generality working only on normal {\it irreducible\/}  schemes.

Fix a normal  irreducible scheme $X$, with function field $K.$  A \emph{prime divisor} of $X$ is a
 reduced irreducible subscheme of codimension one. A  \emph{Weil divisor} (or simply a ``\emph{divisor}'') is an element of the free abelian group Div($X$) generated by the prime divisors on $X$.
A   \emph{$\bQ$-Divisor} is an element of  Div($X$) $  \tensor_{\bZ} \bQ$.

  A $\bQ$-divisor is called \emph{$\bQ$-Cartier} if there exists an integer $m > 0$ such that $mD$ is a Cartier (or locally principle) divisor.  A ($\bQ$-)divisor  $D$ is  \emph{effective} if all of its coefficients are non-negative. We say that $D' \leq D$ if $D - D'$ is effective.   A $\bQ$-divisor $D$ is \emph{ample} (respectively \emph{big}) if there exists an $m > 0$ such that $m D$ is an ample (respectively big) Cartier divisor.

Associated to any divisor $D$ on $X$ there is a coherent subsheaf $\cO_X(D)$ of $K$, the constant sheaf of rational functions on $X$. Explicitly, for an open set $U \subset X$, we have
$$
\cO_X(D)(U) = \{f \in K \, | \, \divisor_U f + D \cap U \geq 0\}.
$$
Here, recall that  $\divisor_U f $ is the formal sum  over all prime divisors $D_i$ on $U$, where the coefficient of $D_i$ is the order of vanishing of $f$ along $D_i$ (that is, the order of $f$ in the discrete valuation ring $\mathcal O_{X, D_i}$).
A simple but important observation we will use repeatedly is that whenever $D' \leq D$, we have an inclusion $\cO_X(D') \subset \cO_X(D)$,  as subsheaves of K.

\begin{proposition}
\label{PropSectionsAreSameAsEmbeddings}
For a normal irreducible scheme $X$,
the association $$
\Gamma(X, \cO_X(D)) \rightarrow \text{Div} (X)$$
$$
\,\,\,\,\,\,\,\,\,\,\,\,\,\,\,\,\,s \mapsto \divisor \,s + D
$$
defines  a one-to-one correspondence between effective divisors linearly equivalent to $D$ and non-zero global sections of $ \cO_X(D)$ modulo multiplication by units of $ \Gamma(X, \cO_X)$.
\end{proposition}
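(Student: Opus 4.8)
The plan is to establish the two halves of a bijection directly from the definitions. First I would check that the map is well-defined: given a nonzero $s \in \Gamma(X, \cO_X(D))$, by definition $\divisor s + D \geq 0$ on every open set, so $E := \divisor s + D$ is an effective divisor, and since $\divisor s$ is a principal divisor, $E$ is linearly equivalent to $D$. If $s' = us$ for a unit $u \in \Gamma(X,\cO_X)$, then $\divisor s' = \divisor s$ (a global unit has trivial divisor on the normal scheme $X$), so $s$ and $s'$ have the same image; thus the map descends to global sections modulo $\Gamma(X,\cO_X)^\times$.

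For injectivity, suppose $s_1, s_2 \in \Gamma(X, \cO_X(D))$ are nonzero with $\divisor s_1 + D = \divisor s_2 + D$, hence $\divisor(s_1/s_2) = 0$. Since $X$ is normal and irreducible, a rational function with trivial divisor is a section of $\cO_X$, and likewise its inverse, so $s_1/s_2 \in \Gamma(X,\cO_X)^\times$; this shows the classes of $s_1$ and $s_2$ agree. For surjectivity, let $E \geq 0$ be an effective divisor with $E \sim D$, say $E = D + \divisor f$ for some $f \in K^\times$. Then $\divisor f + D = E \geq 0$, so $f \in \Gamma(X, \cO_X(D))(X)$ is a nonzero global section, and by construction it maps to $E$. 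This exhibits the map as a bijection onto the set of effective divisors linearly equivalent to $D$.

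The one step that deserves genuine care — and which I'd flag as the crux — is the claim that a rational function $g \in K^\times$ with $\divisor_X g = 0$ is in fact a global section of $\cO_X$ (and hence, applied to $g$ and $g^{-1}$, a global unit). This is exactly where normality is used: on a normal scheme, $\cO_X(0) = \cO_X$ because $\cO_X = \bigcap_{D_i} \cO_{X,D_i}$ (intersection over codimension-one points) by Serre's criterion / the fact that a normal domain is the intersection of its localizations at height-one primes. Granting this, $\divisor_X g \geq 0$ forces $g \in \Gamma(X, \cO_X(0)) = \Gamma(X,\cO_X)$, and similarly $g^{-1} \in \Gamma(X,\cO_X)$, so $g$ is a unit. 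Everything else — that $\divisor s$ makes sense for $s$ a section of an invertible-away-from-codimension-two sheaf inside the constant sheaf $K$, and that $\divisor(fg) = \divisor f + \divisor g$ — is the standard valuation-theoretic bookkeeping already set up in the paragraph preceding the statement, so I would cite it rather than reprove it.
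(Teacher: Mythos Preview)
Your proof is correct and complete. The paper itself does not prove this proposition: it appears in the preliminaries section as a standard background fact stated without proof, so there is no ``paper's own proof'' to compare against. Your argument is the expected one, and you have correctly isolated normality as the ingredient ensuring that a rational function with trivial divisor is a global unit.
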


\subsection{Reflexive sheaves}\label{reflex}
A coherent sheaf $\MM$ on $X$ is said to be {\it reflexive} if the natural map
 $\MM \rightarrow \MM^{\vee\vee}$  to its double dual   (with respect to $\sHom_{\cO_X}(\blank, \cO_X)$) is an isomorphism. The sheaf $\cO_X(D)$  is reflexive for any Weil divisor $D$, but it is not invertible unless $D$ is a Cartier divisor. In fact, every rank one reflexive module   $\MM$ on $X$ is isomorphic to some  $\cO_X(D)$.  In such a case, if $\MM$ has a global section $s$, then the zero set of $s$ (counting
  multiplicities) determines a uniquely defined effective Weil divisor $D$ on $X$ and a corresponding isomorphism of $\MM$ with $\cO_X(D)$ sending $s$ to the element $1$ of the function field $K$.

For the convenience of the reader, we record some useful properties of reflexive modules that we will use without comment.

\begin{proposition}
Let   $\MM$ be a coherent sheaf on  a normal irreducible scheme $X$.  Then:
\begin{itemize}
\item[(1)]  $\MM$ is reflexive if and only if $\MM$ is torsion free and satisfies Serre's S2 condition.
\item[(2)]   If $\NN$ is reflexive, then $\sHom(\MM, \NN)$ is also reflexive. In particular, the sheaf $\MM^{\vee} =
\sHom_{\cO_X}(\MM, \cO_X)$ is reflexive.
\item[(3)]  If $X$ is an $F$-finite scheme of prime characteristic, then $\MM$ is reflexive if and only if $F^e_* \MM$ is reflexive.
\item[(4)]  Suppose $\MM$ is reflexive, and  let $i : U \rightarrow X$ be the inclusion of an open set whose compliment has codimension two.  Then $i_* (\MM|_U) \cong \MM$. Furthermore,  restriction to $U$ induces an equivalence of categories from reflexive coherent sheaves on $X$ to reflexive coherent sheaves on $U$.
\end{itemize}
\end{proposition}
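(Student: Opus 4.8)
The plan is to deduce all four parts from the depth-theoretic form of Serre's condition together with the depth lemma. Recall that a coherent sheaf $\MM$ on $X$ satisfies $S_2$ exactly when $\operatorname{depth}\MM_x \ge \min(2,\dim \cO_{X,x})$ for every point $x$; in particular $\operatorname{depth}\MM_x\ge 2$, equivalently the local cohomology modules $H^0_{\bm_x}(\MM_x)$ and $H^1_{\bm_x}(\MM_x)$ vanish, whenever $x$ has codimension $\ge 2$, while $\MM$ is torsion free near every point of codimension $\ge 1$. The depth lemma asserts that for a short exact sequence $0 \to \MM' \to \MM \to \MM'' \to 0$ of coherent sheaves one has $\operatorname{depth}\MM'_x \ge \min(\operatorname{depth}\MM_x, \operatorname{depth}\MM''_x + 1)$ at each $x$. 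Since $X$ is normal it satisfies $S_2$, so $\cO_X$ and all its finite direct sums do. As a preliminary observation, every dual sheaf $\NN^\vee = \sHom_{\cO_X}(\NN,\cO_X)$ is torsion free and $S_2$: working locally, choose a presentation $\cO_X^a \to \cO_X^b \to \NN \to 0$ and dualize into $\cO_X$ to get $0 \to \NN^\vee \to \cO_X^b \to \mathcal J \to 0$ with $\mathcal J \subseteq \cO_X^a$ torsion free, so the depth lemma gives $\operatorname{depth}\NN^\vee_x \ge \min(2, 1+1) = 2$ at codimension $\ge 2$ points.

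For (1): if $\MM$ is reflexive, then $\MM \cong \MM^{\vee\vee}$ is a dual sheaf, hence torsion free and $S_2$ by the preliminary observation. Conversely, suppose $\MM$ is torsion free and $S_2$. At a point of codimension one the local ring is a discrete valuation ring, over which a finitely generated torsion free module is free and therefore reflexive, so $\MM \to \MM^{\vee\vee}$ is an isomorphism in codimension $\le 1$ (its kernel vanishes already because $\MM$ is torsion free); write $0 \to \MM \to \MM^{\vee\vee} \to Q \to 0$, where $\Supp Q$ has codimension $\ge 2$. If $Q \ne 0$, pick $x$ a generic point of $\Supp Q$; then $Q_x$ is supported only at $\bm_x$, so $Q_x = H^0_{\bm_x}(Q_x)$. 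But $H^1_{\bm_x}(\MM_x) = 0$ because $\MM$ is $S_2$, and $H^0_{\bm_x}(\MM^{\vee\vee}_x) = 0$ because $\MM^{\vee\vee}$ is torsion free, so the long exact sequence of local cohomology forces $H^0_{\bm_x}(Q_x) = 0$, contradicting $Q_x \ne 0$. Hence $Q = 0$ and $\MM$ is reflexive.

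For (2): by (1) it suffices to show $\sHom(\MM,\NN)$ is torsion free and $S_2$. Torsion freeness is immediate since $\NN$ is torsion free. For $S_2$, apply $\sHom(\blank,\NN)$ to a local presentation $\cO_X^a \to \cO_X^b \to \MM \to 0$ to obtain $0 \to \sHom(\MM,\NN) \to \NN^b \to \mathcal J \to 0$ with $\mathcal J \subseteq \NN^a$ torsion free; since $\NN$ is reflexive, hence $S_2$ by (1), $\NN^b$ has depth $\ge 2$ at codimension $\ge 2$ points, and the depth lemma propagates this to $\sHom(\MM,\NN)$. The statement for $\MM^\vee$ is the case $\NN = \cO_X$. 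For (3): reflexivity, torsion freeness and $S_2$ can all be checked at stalks, so by (1) it is enough to note that $(F^e_*\MM)_x$ and $\MM_x$ have the same depth over $\cO_{X,x}$ for every $x$. This holds because $R$ being $F$-finite makes $F^e$ a finite morphism, and a finite local homomorphism leaves depth unchanged; since $F$ is moreover the identity on the underlying space, it follows that $F^e_*\MM$ is $S_2$ (respectively torsion free) if and only if $\MM$ is, hence reflexive if and only if $\MM$ is.

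For (4): set $Z = X \setminus U$, of codimension $\ge 2$. Because $\MM$ is reflexive it is $S_2$, so the local cohomology sheaves $\mathcal H^0_Z(\MM)$ and $\mathcal H^1_Z(\MM)$ vanish; in the exact sequence $0 \to \mathcal H^0_Z(\MM) \to \MM \to i_*(\MM|_U) \to \mathcal H^1_Z(\MM)$ the kernel of $\MM \to i_*(\MM|_U)$ is $\mathcal H^0_Z(\MM)$ and its cokernel embeds into $\mathcal H^1_Z(\MM)$, so $\MM \cong i_*(\MM|_U)$. For the equivalence of categories, restriction to $U$ carries reflexive coherent sheaves to reflexive coherent sheaves because reflexivity is local, and it is fully faithful since for reflexive $\MM,\NN$ on $X$ adjunction together with the isomorphism just established gives $\Hom_X(\MM,\NN) = \Hom_X(\MM, i_*(\NN|_U)) = \Hom_U(\MM|_U,\NN|_U)$. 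For essential surjectivity, given a reflexive coherent sheaf $\NN$ on $U$, extend the coherent sheaf $\NN$ to a coherent sheaf $\tld\NN$ on $X$ and put $\MM = \tld\NN^{\vee\vee}$; then $\MM$ is coherent and reflexive on $X$ with $\MM|_U \cong \NN^{\vee\vee} = \NN$, and in particular $i_*\NN \cong \MM$ is coherent. The bulk of the work is the depth bookkeeping across the two dualized presentations; the sole external ingredient is that a coherent sheaf on the open subscheme $U$ of the noetherian scheme $X$ extends to a coherent sheaf on all of $X$, used for essential surjectivity in (4).
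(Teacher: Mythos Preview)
The paper does not prove this proposition at all: it is introduced with ``For the convenience of the reader, we record some useful properties of reflexive modules that we will use without comment,'' and no argument is given. Your proof is correct and self-contained; the depth-lemma bookkeeping for (1) and (2), the observation that a finite local homomorphism preserves depth for (3), and the local-cohomology exact sequence plus double-dual extension for (4) are exactly the standard arguments one would supply for these well-known facts.
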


\begin{remark}\label{reflexeasy} We will make especially good use of the fourth statement above. It allows us to treat rank one reflexive sheaves on a normal variety essentially as invertible sheaves: we simply restrict to the regular locus (whose compliment has codimension at least two), perform any operations there, and then extend uniquely to   a
reflexive sheaf on all of $X$.  In terms of Weil divisors, we often argue on the regular locus where they are Cartier, then we take the closure in $X$ to arrive at a statement about Weil divisors on $X$.
\end{remark}

\subsection{The canonical sheaf}\label{canonical}
An important reflexive sheaf is the canonical sheaf $\omega_X$.  When $X$ is of finite type over a
field, $\omega_X$ is the unique reflexive sheaf agreeing with the invertible sheaf  $\wedge^d\Omega_X$ of  top Kahler forms on the smooth locus of $X$, where $d$ is the dimension of $X$.   A \emph {canonical divisor} is any divisor  $K_X$ such that $\omega_X \cong \cO_X(K_X)$.

 More generally, we can work with canonical modules on any integral scheme essentially of finite type over a Gorenstein local ring $R$ (or over any local ring with a dualizing complex). Such a scheme admits a fixed choice of dualizing complex,  namely $\omega_X^{\mydot} =  \eta^{!}R$
  \cite{HartshorneResidues}.  If $X$ is Cohen-Macaulay (and equidimensional),  $\omega_X^{\mydot}$ is concentrated in degree $-d$, and we call this the dualizing sheaf.
So for normal   $X$,  the canonical module $\omega_X$ can be defined as the unique reflexive sheaf which agrees with this dualizing sheaf on the Cohen-Macaulay locus (whose compliment has codimension  at least three in the normal case).

Throughout the rest of the paper, whenever we work with canonical modules, we always assume
they exist in the sense just described. In particular, in prime characteristic, we will assume that all schemes are essentially of finite type over an $F$-finite Gorenstein local ring (or simply over an $F$-finite local ring with a dualizing complex). Importantly, in this case, $F^!\omega_X^{\mydot}$ is quasi-isomorphic to $\omega_X^{\mydot}$.  

\section{Global $F$-regularity and global sharp $F$-splitting of pairs}

We now introduce $F$-splitting and global  $F$-regularity  ``for a  pair" $(X, \Delta)$, and prove an important criterion for them, Theorem \ref{LemmaSplitAlongAndComplementDivisorImpliesFRegular}.  Throughout, we assume
$X$ is  a normal irreducible  scheme{\footnote {In this section, we do not need to assume the existence of dualizing complexes.}} of prime characteristic $p$ and $\Delta$ is an effective  $\bQ$-divisor.
The function field of $X$ is denoted by $K$.

\label{natural} For  any effective divisor $D$ on $X$, we have an inclusion $\cO_X \hookrightarrow \cO_X(D)$   of subsheaves of the function field $K$.  Thus we also have an inclusion $F^e_*\cO_X \hookrightarrow F^e_*\cO_X(D)$. Precomposing with the Frobenius map $\cO_X \rightarrow F^e_*\cO_X$, we have a natural map
$$
\cO_X \rightarrow    F^e_*\cO_X(D)
$$
of coherent $\cO_X$-modules.
It will be the splitting of maps such as these, for particular types of $D$, that provide the definition of $F$-splitting and $F$-regularity.

\begin{definition}\label{def1} Let  $(X, \Delta)$ be a pair,  where $X$ is a normal irreducible $F$-finite  scheme of prime characteristic $p$ and $\Delta$ is an effective  $\bQ$ divisor on $X$.
\begin{itemize}
\item[(i)]
The pair $(X, \Delta)$ is \emph{globally $F$-regular} if, for every effective divisor $D$, there exists some $e > 0$  such that the natural map
 $\cO_X \rightarrow F^e_* \cO_X( \lceil (p^e - 1) \Delta \rceil + D) $  splits (in the category of  $\cO_X$-modules).
 \item[(ii)]
 Similarly, the pair $(X, \Delta)$ is \emph{globally sharply $F$-split} if there exists an $e > 0$
for which the natural map  $ \cO_X \rightarrow F^e_* \cO_X(\lceil (p^e - 1) \Delta \rceil)$
splits.
\item[(iii)]  The pair $(X, \Delta)$ is \emph{locally $F$-regular} if  $X$ can be covered by open sets  $U_i$ such that each $(U_i, \Delta|_{U_i})$ is
 globally  $F$-regular.  {\emph{Local  sharp $F$-splitting}} for a pair is defined similarly.
   \end{itemize}
   \end{definition}

    In general,
 the  global conditions are {\it much stronger\/} than their local counterparts. For example, every smooth projective curve  is locally $F$-regular, but only the genus zero curves are globally $F$-regular; see Example \ref{curve}.  On the other hand, for an affine scheme $X$,  the pair $(X, \Delta)$ is globally  sharply $F$-split (respectively, globally  $F$-regular) if and only if it is    locally sharply $F$-split (respectively, locally
     $F$-regular).

\begin{remark}\label{related} Global $F$-regularity and global sharp $F$-splitting of pairs generalizes and unifies several points of view.  If $X$  is projective and  $\Delta= 0$,    global sharp $F$-splitting for the pair $(X, 0)$ recovers the usual notion of $F$-splitting for $X$ due to Mehta and Ramanathan  \cite{MehtaRamanathanFrobeniusSplittingAndCohomologyVanishing}, while  global $F$-regularity for $(X, 0)$  recovers Smith's notion of global $F$-regularity for $X$ \cite[Thm 3.10]{SmithGloballyFRegular}. (The latter is  itself a stable version of Ramanan and Ramanathan's ``Frobenius splitting along a divisor" \cite{RamananRamanathanProjectiveNormality};    See also
\cite{HaraWatanabeYoshidaReesAlgebrasOfFRegularType}).     At the opposite extreme, for an affine scheme $X =  \Spec R$,  sharp $F$-splitting for $(X, \Delta)$  is simply Schwede's notion of sharp $F$-purity for $(R, \Delta)$, itself a generalization to pairs of Hochster and Robert's $F$-purity for $R$, see \cite{HochsterRobertsRingsOfInvariants} and \cite{HochsterRobertsFrobeniusLocalCohomology} and a slight variant of the notion of (non-sharp) $F$-purity for pairs found in \cite{HaraWatanabeFRegFPure}.  Similarly, our notion of $F$-regularity in the affine setting is simply Hara and Watanabe's strong $F$-regularity for the pair $(R, \Delta)$, which is itself a generalization to pairs  of Hochster and Huneke's  notion  of strong
$F$-regularity for $R$; see \cite{HaraWatanabeFRegFPure} and \cite{HochsterHunekeTightClosureAndStrongFRegularity}.
     \end{remark}

       \begin{remark}\label{opencondition}
       The locus of points of $X$ where a pair $(X, \Delta)$ is locally  $F$-regular (or locally sharply $F$-split) is a non-empty open set of $X$. Indeed, the locus of points  where each  $\cO_X \rightarrow F^e_* \cO_X( \lceil (p^e - 1) \Delta \rceil + D) $  fails to split is closed, and the intersection of arbitrarily many closed sets (as we range over all $e$ and all effective $D$) is closed.  On the other hand, restricting to an open affine set where $X$ is non-singular and $\Delta$ is empty,  we obviously have a splitting.
            \end{remark}

    \begin{remark} \label{concrete}
For any effective divisor $A$,  the sheaf  $F^e_* \cO_X(A)$ is a  subsheaf of the sheaf $F^e_*K$, where $K$ is the function field of $X$.
If we like, we can identify  $F^e_*K$ with the field $K^{1/p^e}$, and view both $\cO_X$ and
$F^e_* \cO_X(A)$ as subsheaves of  $K^{1/p^e}$.  In this case,
 finding a splitting as specified by Definition \ref{def1} is simply a matter of finding an
$\cO_X$-linear map $F^e_* \cO_X(A) \rightarrow \cO_X$ which sends the element 1 to 1, where here $A$ stands for either
$\lceil (p^e - 1) \Delta \rceil$ or
  $\lceil (p^e - 1) \Delta \rceil + D$.
\end{remark}

The next lemma is extremely simple, but we will use it (and the technique of its proof) repeatedly.
\begin{lemma}\label{simple}
If $(X, \Delta)$ is globally $F$-regular, then $(X, \Delta')$ is globally $F$-regular for any $\Delta' \leq \Delta$. The corresponding statement for globally sharply $F$-split pairs also holds.
\end{lemma}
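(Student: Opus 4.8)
The plan is to exploit the fact that all of the relevant sheaves sit naturally inside the pushed-forward function field, together with the obvious monotonicity of the ceiling. First I would note that since $\Delta' \leq \Delta$ and $p^e - 1 > 0$, we have $(p^e-1)\Delta' \leq (p^e-1)\Delta$, hence $\lceil (p^e-1)\Delta' \rceil \leq \lceil (p^e-1)\Delta \rceil$ as integral divisors; adding an arbitrary effective divisor $D$ preserves the inequality. By the observation recorded in Section \ref{SectionPositiveCharacteristicPreliminiaries} that $D_1 \leq D_2$ yields an inclusion $\cO_X(D_1) \subseteq \cO_X(D_2)$ of subsheaves of $K$, we obtain an inclusion
$$
\cO_X\bigl(\lceil (p^e-1)\Delta' \rceil + D\bigr) \hookrightarrow \cO_X\bigl(\lceil (p^e-1)\Delta \rceil + D\bigr),
$$
and, applying $F^e_*$, an inclusion $\iota$ of the corresponding pushforwards.

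Next I would observe, using the description of the natural maps given just before Definition \ref{def1} together with the identification of $F^e_* K$ with $K^{1/p^e}$ from Remark \ref{concrete}, that the natural map attached to $\Delta'$ is precisely the natural map attached to $\Delta$ post-composed with $\iota$: both are $\cO_X$-linear and send $1$ to $1$ inside $K^{1/p^e}$. Now fix an effective divisor $D$. Global $F$-regularity of $(X, \Delta)$ provides an $e > 0$ and an $\cO_X$-linear splitting $\psi$ of $\cO_X \to F^e_* \cO_X(\lceil (p^e-1)\Delta \rceil + D)$. Then $\psi \circ \iota$ is an $\cO_X$-linear map from $F^e_* \cO_X(\lceil (p^e-1)\Delta' \rceil + D)$ to $\cO_X$, and composing it with the natural map attached to $\Delta'$ gives $\psi$ composed with the natural map attached to $\Delta$, which is $\mathrm{id}_{\cO_X}$. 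Hence the natural map attached to $\Delta'$ splits, with the same $e$; as $D$ was arbitrary, $(X, \Delta')$ is globally $F$-regular. For the globally sharply $F$-split statement I would run the identical argument with $D = 0$: an exponent $e$ witnessing sharp $F$-splitting for $(X, \Delta)$ also witnesses it for $(X, \Delta')$ via $\psi \circ \iota$.

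I do not expect a genuine obstacle here; the only point requiring care is the bookkeeping that makes ``the natural map for $\Delta'$, post-composed with $\iota$, equals the natural map for $\Delta$'' literally correct, which is exactly why it is convenient to view all the sheaves as subsheaves of the common field $K^{1/p^e}$ before composing the given splitting with the inclusion $\iota$.
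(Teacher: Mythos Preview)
Your proposal is correct and follows essentially the same approach as the paper: both arguments observe that the natural map for $\Delta$ factors as the natural map for $\Delta'$ followed by the inclusion $\iota$, and then precompose the given splitting with $\iota$. The paper states this as an abstract observation about factorizations of split maps, while you unpack it concretely for these particular sheaves inside $K^{1/p^e}$; the content is identical (one small wording slip: where you wrote ``the natural map attached to $\Delta'$ is precisely the natural map attached to $\Delta$ post-composed with $\iota$'' you meant the reverse, but the subsequent computation makes your intent clear).
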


\begin{proof}
This follows easily from the following simple observation: If a map of coherent sheaves  $\LL \overset{g}\rightarrow \FF$ on a scheme $X$ splits, then there is also a splitting for any map $\LL \overset{h}\rightarrow \MM$  through which $g$ factors.
Indeed, factor $g$ as $\LL \overset{h}\rightarrow \MM \overset{h'} \rightarrow \FF$. Then if $s: \FF \rightarrow \LL$ splits $g$, it is clear that the composition $s \circ h'$ splits $h$.
Now we simply observe that if $\Delta' \leq \Delta$, we have a factorization
$$
\cO_X \rightarrow F^e_* \cO_X( \lceil (p^e - 1) \Delta' \rceil + D) \hookrightarrow  F^e_* \cO_X( \lceil (p^e - 1) \Delta \rceil + D),$$ so the result follows.
\end{proof}

\begin{remark}  Lemma \ref{simple} implies that if $(X, \Delta)$ is globally $F$-regular for some $\Delta$, then also $(X, 0)$ is globally $F$-regular, which is to say the scheme $X$ is globally  $F$-regular in the original sense of \cite{SmithGloballyFRegular}.
\end{remark}

\begin{remark}  If $X$ is quasi-projective over an $F$-finite ring, then Definition \ref{def1} (i) of global $F$-regularity for  $(X, \Delta)$ can be stated with {\it very ample} effective  $D$.
To see this, note that if $X$ is quasi-projective, then for every effective divisor $D$ there exists a very ample effective divisor $A$ such that $D \leq A$, so the splitting for $A$ implies the splitting for $D$,
 as in the proof of  Lemma \ref{simple}.
 A more interesting fact is that it is often enough to check the splitting condition \emph{for one choice} of ample divisor $D$; see Theorem \ref{LemmaSplitAlongAndComplementDivisorImpliesFRegular}.
\end{remark}

\begin{proposition}
Let $X$ be a normal $F$-finite scheme of prime characteristic $p$, with  effective $\bQ$-divisor $\Delta$.
\begin{itemize}
\item[(a)]
The pair
 $(X, \Delta)$ is globally $F$-regular if and only if for all effective divisors $D$ and all $e \gg 0$, the map
\[
\cO_X \rightarrow F^{e}_* \cO_X(\lceil (p^{e} - 1) \Delta \rceil + D)
\]
splits.
\item[(b)]  The pair
 $(X, \Delta)$ is globally sharply $F$-split if and only if there exists an $e_0 > 0$ such that
\[
\cO_X \rightarrow F^{ne_0}_* \cO_X(\lceil (p^{ne_0} - 1) \Delta \rceil)
\]
splits for all $n > 0$.

\end{itemize}
\end{proposition}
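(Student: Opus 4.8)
The plan is to reduce each of (a) and (b) to the bare definition via the factorization trick from Lemma~\ref{simple}, using only that one effective divisor dominates another. For part (a), one direction is trivial: if the splitting holds for all effective $D$ and all $e \gg 0$, then in particular there exists some $e > 0$ for which it splits, so $(X,\Delta)$ is globally $F$-regular by Definition~\ref{def1}(i). For the converse, suppose $(X,\Delta)$ is globally $F$-regular and fix an effective divisor $D$ and an arbitrary $e_1 > 0$; I want to produce a splitting of $\cO_X \to F^{e_1}_* \cO_X(\lceil (p^{e_1}-1)\Delta\rceil + D)$. The idea is to apply global $F$-regularity to a \emph{larger} effective divisor. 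Concretely, for a given $e_1$, one applies the definition to the effective divisor $D' := D + \lceil(p^{e_1}-1)\Delta\rceil$ (or something slightly larger to absorb rounding), obtaining some $e > 0$ with $\cO_X \to F^{e}_*\cO_X(\lceil(p^e-1)\Delta\rceil + D')$ split. The key numerical point is then that
\[
\lceil (p^{e_1} - 1)\Delta \rceil + D \;\le\; \lceil (p^e - 1)\Delta \rceil + D',
\]
which must be verified by comparing coefficients prime-by-prime using $p^{e} \ge p^{e_1}$ — this is where a small amount of care with the ceiling function is needed, but it is routine. Granting the inequality, the inclusion of subsheaves of $F^{e}_* K$ gives a factorization of the natural map $\cO_X \to F^{e}_*\cO_X(\lceil(p^e-1)\Delta\rceil + D')$ through $\cO_X \to F^{e}_*\cO_X(\lceil(p^{e_1}-1)\Delta\rceil + D) \hookrightarrow F^{e}_*\cO_X(\cdots)$, wait — the exponents must match, so in fact one should argue that the \emph{existence} of a splitting for \emph{one} value of $e$ forces splittings for all larger $e$, which is the content one really needs.

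Let me restructure: the honest core of (a) is the claim that if $\cO_X \to F^{e}_*\cO_X(\lceil(p^e-1)\Delta\rceil + D)$ splits for some $e$ and some $D$, then for every $e' \ge e$ the map $\cO_X \to F^{e'}_*\cO_X(\lceil(p^{e'}-1)\Delta\rceil + D)$ splits as well. To see this, write $e' = e + e''$ and apply $F^{e''}_*$ to a splitting $\sigma \colon F^{e}_*\cO_X(\lceil(p^e-1)\Delta\rceil + D) \to \cO_X$; composing $F^{e''}_*\sigma$ after the natural inclusion $F^{e'}_*\cO_X(\lceil(p^{e'}-1)\Delta\rceil + D) \hookrightarrow F^{e''}_* F^{e}_*\cO_X(p^{e''}(\lceil(p^e-1)\Delta\rceil + D))$ — here one checks $\lceil(p^{e'}-1)\Delta\rceil + D \le p^{e''}\bigl(\lceil(p^e-1)\Delta\rceil + D\bigr)$, again a prime-by-prime ceiling estimate — and then precomposing with $\cO_X \to F^{e'}_*\cO_X$, one obtains an $\cO_X$-linear map sending $1 \mapsto 1$, i.e.\ a splitting. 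Combined with the observation (Lemma~\ref{simple}-style) that splitting for $D$ forces splitting for any $D' \le D$, and the fact that for quasi-projective $X$ every effective divisor is dominated by a very ample one, one deduces that if $(X,\Delta)$ is globally $F$-regular then for \emph{all} effective $D$ and \emph{all} $e \gg 0$ the displayed map splits. (In the general, non-quasi-projective case one argues directly with the given $D$.)

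For part (b), the argument is parallel but simpler since no auxiliary $D$ appears. If such an $e_0$ exists, taking $n = 1$ gives a splitting of $\cO_X \to F^{e_0}_*\cO_X(\lceil(p^{e_0}-1)\Delta\rceil)$, so $(X,\Delta)$ is globally sharply $F$-split. Conversely, if $(X,\Delta)$ is globally sharply $F$-split, there is some $e_0 > 0$ with $\cO_X \to F^{e_0}_*\cO_X(\lceil(p^{e_0}-1)\Delta\rceil)$ split; I claim this same $e_0$ works for all $n$. Given a splitting $\sigma$ at level $e_0$, one obtains a splitting at level $2e_0$ by composing $F^{e_0}_*\sigma$ with the natural map $\cO_X \to F^{e_0}_*\cO_X(\lceil(p^{e_0}-1)\Delta\rceil)$ pushed forward and with the inclusion coming from the inequality $\lceil(p^{2e_0}-1)\Delta\rceil \le p^{e_0}\lceil(p^{e_0}-1)\Delta\rceil + \lceil(p^{e_0}-1)\Delta\rceil = (p^{e_0}+1)\lceil(p^{e_0}-1)\Delta\rceil$; iterating (equivalently, using the telescoping identity $p^{ne_0}-1 = (p^{e_0}-1)(p^{(n-1)e_0} + \cdots + 1)$) gives the statement for all $n$. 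The only real obstacle in the whole proposition is bookkeeping with the ceiling function in these coefficient inequalities — everything else is the standard ``compose pushed-forward splittings'' manipulation. I expect to state the needed inequality $\lceil (p^a - 1)\Delta\rceil + p^a\lceil(p^b-1)\Delta\rceil \ge \lceil(p^{a+b}-1)\Delta\rceil$ (valid because $(p^a - 1) + p^a(p^b - 1) = p^{a+b} - 1$ and $\lceil x \rceil + \lceil y \rceil \ge \lceil x + y \rceil$) once, and then both parts follow formally.
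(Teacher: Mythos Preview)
Your argument for part (b) is fine and is essentially the standard one: the telescoping identity $(p^{ne_0}-1)=(p^{e_0}-1)(p^{(n-1)e_0}+\cdots+1)$ together with $\lceil x\rceil+\lceil y\rceil\ge\lceil x+y\rceil$ lets you compose the level-$e_0$ splitting with itself $n$ times. The paper simply cites this from \cite{SchwedeSharpTestElements}.

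Part (a), however, has a genuine gap. Your ``core claim'' is that a single splitting $\sigma\colon F^{e}_*\cO_X(\lceil(p^{e}-1)\Delta\rceil+D)\to\cO_X$ forces splittings for every $e'=e+e''>e$, and you justify this via the inclusion
\[
F^{e'}_*\cO_X\bigl(\lceil(p^{e'}-1)\Delta\rceil+D\bigr)\ \hookrightarrow\ F^{e''}_*F^{e}_*\cO_X\bigl(p^{e''}(\lceil(p^{e}-1)\Delta\rceil+D)\bigr),
\]
which would require $\lceil(p^{e'}-1)\Delta\rceil \le p^{e''}\lceil(p^{e}-1)\Delta\rceil$. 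That inequality is \emph{false} in general: since $p^{e'}-1>p^{e''}(p^{e}-1)$, the left side can exceed the right. For instance, with $p=3$, $\Delta=\tfrac12 P$, $e=e''=1$ one gets $\lceil 8\cdot\tfrac12\rceil=4>3=3\lceil 2\cdot\tfrac12\rceil$. (There is also a bookkeeping slip: $F^{e''}_*\sigma$ has source $F^{e'}_*\cO_X(\lceil(p^e-1)\Delta\rceil+D)$, with no factor of $p^{e''}$, and its target is $F^{e''}_*\cO_X$, not $\cO_X$.) Your final inequality $\lceil(p^a-1)\Delta\rceil+p^a\lceil(p^b-1)\Delta\rceil\ge\lceil(p^{a+b}-1)\Delta\rceil$ is correct, but to use it at level $e'=a+b$ you would need a $(X,\Delta)$-splitting at level $a$ \emph{and} at level $b$; part (b) only supplies these along an arithmetic progression, so you will not reach all large $e$ this way.

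What the paper does to fix this is exactly what you abandoned after your first paragraph: apply global $F$-regularity not to $D$ but to $D+E$, where $E$ is effective with $E+\lceil(p^e-1)\Delta\rceil\ge\lceil p^e\Delta\rceil$ for all $e$ (e.g.\ $E=\lceil\Delta\rceil$). This yields, for some $e_0$, a splitting of $\cO_X\to F^{e_0}_*\cO_X(\lceil p^{e_0}\Delta\rceil+D)$. \emph{Then} one composes with the ordinary Frobenius splitting $\cO_X\to F^f_*\cO_X$ of $X$ (available for every $f$, since $X$ is globally $F$-regular hence $F$-split). The resulting divisor is $p^f\lceil p^{e_0}\Delta\rceil+p^fD$, and now the needed inequality $p^f\lceil p^{e_0}\Delta\rceil\ge\lceil p^{e_0+f}\Delta\rceil\ge\lceil(p^{e_0+f}-1)\Delta\rceil$ \emph{does} hold, giving splittings for all $e=e_0+f$. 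The extra $E$ is precisely the missing idea.
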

\begin{proof}
One implication of (a) is trivial, so we assume that $(X, \Delta)$ is globally $F$-regular. Now given any effective divisor $D$, we wish find a natural number $e_0$ such that
\[
\cO_X \rightarrow F^{e}_* \cO_X(\lceil (p^{e} - 1) \Delta \rceil + D)
\]
splits for all $e>e_0$.
 Let $E$ be an effective divisor such that $$E + \lceil (p^e - 1)\Delta \rceil \geq \lceil p^e \Delta \rceil$$ for all $e > 0$.  Then, because $(X, \Delta)$ is globally $F$-regular and $D+E$ is effective, we can find $e_0 $ such that the map
\[
\cO_X \rightarrow F^{e_0}_* \cO_X(\lceil (p^{e_0} - 1) \Delta \rceil + D + E)
\]
splits.   Since this map factors through  $F^{e_0}_* \cO_X(\lceil p^{e_0} \Delta \rceil + D )$ (see the
 proof of Lemma \ref{simple}), this implies a splitting of
 \begin{equation}
 \label{1}
\cO_X \rightarrow F^{e_0}_* \cO_X(\lceil p^{e_0} \Delta \rceil + D ).
\end{equation}

 Now because $X$ itself is $F$-split, we know that $\cO_X \rightarrow F^{f}_*\cO_X$ splits for all $f > 0$.
 Tensoring{\footnote{Strictly speaking, we tensor after first restricting to the nonsingular locus of $X$,  where all sheaves are locally free, then extend uniquely to maps of reflexive sheaves on all of $X$. Equivalently, we can double-dualize after tensoring to recover the reflexivity. These are equivalent since a reflexive sheaf on a normal scheme is determined by its restriction to any open set whose compliment has codimension at least two. We will use this technique for handling reflexive sheaves throughout with little comment; See Subsection \ref{reflex}.}}
 with $ \cO_X(\lceil p^{e_0} \Delta \rceil + D )$,  we have a splitting of
 $$
  \cO_X(\lceil p^{e_0} \Delta \rceil + D ) \rightarrow  F^{f}_*\cO_X \otimes
    \cO_X(\lceil p^{e_0} \Delta \rceil + D ) \cong   F^f_*\cO_X(p^f\lceil p^{e_0} \Delta \rceil + p^f D )
 $$
  for all $f > 0$ (with a natural isomorphism coming from the  projection formula for $F^f$).
 Applying the functor $F^{e_0}_*$, and then  precomposing with the natural map from line (\ref{1}) above, we have a splitting of the map
 \smallskip
\[
\cO_X \rightarrow F^{e_0}_* \cO_X(\lceil (p^{e_0} ) \Delta \rceil + D) \rightarrow
 F^{e_0+f}_*\cO_X(p^f \lceil (p^{e_0} ) \Delta \rceil + p^fD),
\]
\smallskip
 for all $f > 0.$
Finally, because $p^f \lceil p^{e_0} \Delta \rceil + p^f D \geq \lceil (p^{e_0 + f} -1)\Delta \rceil + D$ for all $f$, we conclude that
\[
\cO_X \rightarrow F^{e_0 + f}_* \cO_X(\lceil (p^{e_0 + f} -1)\Delta \rceil + D)
\]
also splits, again  as in the proof of Lemma \ref{simple}.
 The proof  of (a) is complete.

 The proof of  (b) follows as in \cite[Proposition 3.3]{SchwedeSharpTestElements} and \cite{SchwedeCentersOfFPurity}.
\end{proof}

\smallskip
We now establish a useful criterion for  global $F$-regularity, generalizing well-known
results for the local case \cite[Theorem 3.3]{HochsterHunekeTightClosureAndStrongFRegularity} and the
 ``boundary-free" case \cite[Theorem 3.10]{SmithGloballyFRegular}.
\begin{theorem}
\label{LemmaSplitAlongAndComplementDivisorImpliesFRegular}
The pair  $(X, \Delta)$ is globally $F$-regular if (and only if)   there exists some effective divisor $C$ on $X$  satisfying the following two properties:
\begin{itemize}
\item[(i)]  There exists an $e > 0$ such that the natural  map
\[
\xymatrix{
\cO_{X}  \rightarrow   F^e_* \cO_{X}(\lceil (p^e - 1)\Delta + C \rceil)
}
\]
splits.
\item[(ii)]  The pair $(X \setminus C, \Delta|_{X \setminus C})$ globally $F$-regular (for example, affine and locally $F$-regular).
\end{itemize}
\end{theorem}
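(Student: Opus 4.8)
My plan is to prove the nontrivial (``if'') implication; the converse is easy, since for a globally $F$-regular pair $(X,\Delta)$ condition (i) is just Definition~\ref{def1}(i) applied with $C$ (any effective divisor) in the role of $D$, and condition (ii) follows by restricting the resulting splittings to the open set $X\setminus C$, exactly as in the proof of Lemma~\ref{simple}. So suppose $C$ satisfies (i) and (ii). I would fix an arbitrary effective divisor $D$, let $e_0$ be the exponent furnished by (i), put $U=X\setminus\Supp C$, and aim to produce an $N>0$ for which $\cO_X\to F^N_*\cO_X(\lceil (p^N-1)\Delta\rceil+D)$ splits; this is what global $F$-regularity of $(X,\Delta)$ requires. (Note $C$ is an integral Weil divisor, so $\lceil (p^{e_0}-1)\Delta+C\rceil=\lceil (p^{e_0}-1)\Delta\rceil+C$.)

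First I would produce a splitting over $U$ and push it out to $X$ with controlled poles along $C$. By (ii), $(U,\Delta|_U)$ is globally $F$-regular, so Definition~\ref{def1}(i) applied to the effective divisor $D|_U$ yields an $e_1>0$ and a splitting $\sigma_U$ of $\cO_U\to F^{e_1}_*\cO_U(\lceil (p^{e_1}-1)\Delta|_U\rceil+D|_U)$. Setting $M_1:=\lceil (p^{e_1}-1)\Delta\rceil+D$, I would regard $\sigma_U$ as a section over $U$ of the coherent reflexive sheaf $\sHom_{\cO_X}(F^{e_1}_*\cO_X(M_1),\cO_X)$; because $X\setminus U=\Supp C$ is the support of an effective divisor, this section extends to a section over $X$ of $\sHom_{\cO_X}(F^{e_1}_*\cO_X(M_1),\cO_X)\otimes\cO_X(mC)\cong\sHom_{\cO_X}(F^{e_1}_*\cO_X(M_1),\cO_X(mC))$ for some $m\ge0$. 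In other words I obtain an $\cO_X$-linear $\sigma\colon F^{e_1}_*\cO_X(M_1)\to\cO_X(mC)$ with $\sigma(1)=1$ (this equality holds because it holds over the dense open $U$ and $\cO_X(mC)$ is torsion-free).

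Next I would iterate (i). Repeatedly composing the splitting in (i) with Frobenius push-forwards of itself — twisting by $\cO_X$-modules and invoking the projection formula, as in the proof of the Proposition immediately preceding the theorem — gives for every $j\ge1$ a splitting $\tau_j$ of $\cO_X\to F^{je_0}_*\cO_X\big(c_j\lceil (p^{e_0}-1)\Delta\rceil+c_jC\big)$, where $c_j=1+p^{e_0}+\dots+p^{(j-1)e_0}$; here $c_j\lceil (p^{e_0}-1)\Delta\rceil$ is integral and $\ge(p^{je_0}-1)\Delta$, hence $\ge\lceil (p^{je_0}-1)\Delta\rceil$, and $c_j\to\infty$. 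I would then take $N=e_1+je_0$ with $j$ chosen so that $c_j\ge m$, and splice. From $p^N-1=p^{e_1}(p^{je_0}-1)+(p^{e_1}-1)$, together with $\lceil a+b\rceil\le\lceil a\rceil+\lceil b\rceil$ and $\lceil p^{e_1}x\rceil\le p^{e_1}\lceil x\rceil$, I get $\lceil (p^N-1)\Delta\rceil+D\le p^{e_1}\lceil (p^{je_0}-1)\Delta\rceil+M_1$, so by Lemma~\ref{simple} it is enough to split $\cO_X\to F^N_*\cO_X\big(p^{e_1}\lceil (p^{je_0}-1)\Delta\rceil+M_1\big)$. The projection formula rewrites this sheaf as $F^{je_0}_*\big(F^{e_1}_*\cO_X(M_1)\otimes\cO_X(\lceil (p^{je_0}-1)\Delta\rceil)\big)$; applying $F^{je_0}_*(\sigma\otimes\mathrm{id})$ maps it to $F^{je_0}_*\cO_X\big(\lceil (p^{je_0}-1)\Delta\rceil+mC\big)$, which includes into $F^{je_0}_*\cO_X\big(c_j\lceil (p^{e_0}-1)\Delta\rceil+c_jC\big)$ since $m\le c_j$; composing with $\tau_j$ and tracking the section $1$ (it goes to $1$ at every stage) produces the required splitting.

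The main obstacle I anticipate is the bookkeeping with the round-up divisors $\lceil (p^e-1)\Delta\rceil$: every composite has to be arranged so that the $\Delta$-contributions arising at the various Frobenius levels line up, which is exactly the role of the inequality $\lceil (p^{a+b}-1)\Delta\rceil\le p^{a}\lceil (p^{b}-1)\Delta\rceil+\lceil (p^{a}-1)\Delta\rceil$ and of the repeated projection-formula manipulations (sliding $\Delta$-twists through $F^e_*$ and then reflexivizing away from the regular locus, as the paper does throughout). The one other delicate point is the extension step: hypothesis (ii) and the effectivity of $C$ are precisely what let the splitting $\sigma_U$ over $U$ be promoted to an honest map on $X$ with pole order along $C$ bounded by some $m$, and it is the interaction of this $\sigma$ with the iterates $\tau_j$ — available because $c_j\to\infty$ eventually overtakes $m$ — that makes the argument close.
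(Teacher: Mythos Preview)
Your proof is correct and follows essentially the same approach as the paper's: extend the splitting over $U=X\setminus C$ furnished by (ii) to a map on all of $X$ with bounded pole order $m$ along $C$, then absorb those poles by iterating the splitting from (i) sufficiently many times (so that $c_j\ge m$) and composing. The only place where the paper is slightly more careful is the extension step---it first restricts to the nonsingular (Cartier) locus before invoking \cite[II, Lemma~5.14(b)]{Hartshorne} and then extends uniquely by reflexivity---but this is exactly the standard maneuver you allude to in your final paragraph.
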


\smallskip
\begin{corollary}\label{alternate}
The pair $(X, \Delta)$ is globally $F$-regular if and only if there exists an effective $C$ such that
$(X, \Delta)$ is globally $F$-regular on the open set complimentary to $C$ and $(X, \Delta+ \epsilon C)$
is globally sharply $F$-split for sufficiently small $\epsilon$.
\end{corollary}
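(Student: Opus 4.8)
The plan is to deduce this directly from Theorem \ref{LemmaSplitAlongAndComplementDivisorImpliesFRegular}: I claim that, for a fixed effective divisor $C$ on $X$, condition (i) of that theorem is equivalent to the assertion that $(X, \Delta + \epsilon C)$ is globally sharply $F$-split for all sufficiently small $\epsilon > 0$. Granting this, the corollary follows at once, since condition (ii) of Theorem \ref{LemmaSplitAlongAndComplementDivisorImpliesFRegular} is, word for word, the statement that $(X,\Delta)$ is globally $F$-regular on $X \setminus C$; one simply substitutes the reformulation of (i) into the ``if and only if'' of that theorem. (Throughout I use that $C$ is an integral divisor, so $\lceil (p^e-1)\Delta + C\rceil = \lceil (p^e-1)\Delta\rceil + C$, together with the elementary observation from the proof of Lemma \ref{simple} that a map splits whenever a map through which it factors splits.)

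For the easy half of the claimed equivalence, suppose condition (i) holds, so there is an $e>0$ with $\cO_X \to F^e_* \cO_X(\lceil (p^e-1)\Delta\rceil + C)$ split. If $\epsilon \le 1/(p^e-1)$ then $(p^e-1)\epsilon C \le C$, hence $\lceil (p^e-1)(\Delta + \epsilon C)\rceil \le \lceil (p^e-1)\Delta\rceil + C$ by monotonicity of $\lceil\,\cdot\,\rceil$, so there is an inclusion of subsheaves of $F^e_* K$
$$
F^e_*\cO_X(\lceil (p^e-1)(\Delta+\epsilon C)\rceil) \hookrightarrow F^e_*\cO_X(\lceil (p^e-1)\Delta\rceil + C).
$$
The natural map $\cO_X \to F^e_*\cO_X(\lceil (p^e-1)(\Delta+\epsilon C)\rceil)$ (whose splitting is exactly global sharp $F$-splitting of $(X,\Delta+\epsilon C)$) factors the split map of (i) through this inclusion, hence splits; by Lemma \ref{simple} the same holds for every $0 < \epsilon' \le \epsilon$.

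For the converse, suppose $(X, \Delta + \epsilon C)$ is globally sharply $F$-split for some $\epsilon>0$. The crucial point is to promote this to a splitting with a \emph{large} Frobenius power. By part (b) of the Proposition preceding Theorem \ref{LemmaSplitAlongAndComplementDivisorImpliesFRegular}, applied to the pair $(X,\Delta+\epsilon C)$, there is an $e_0>0$ such that $\cO_X \to F^{ne_0}_*\cO_X(\lceil (p^{ne_0}-1)(\Delta+\epsilon C)\rceil)$ splits for all $n>0$. Choosing $n$ large enough that $(p^{ne_0}-1)\epsilon \ge 1$, we get $(p^{ne_0}-1)(\Delta+\epsilon C) \ge (p^{ne_0}-1)\Delta + C$, whence $F^{ne_0}_*\cO_X(\lceil (p^{ne_0}-1)\Delta + C\rceil) \subseteq F^{ne_0}_*\cO_X(\lceil (p^{ne_0}-1)(\Delta+\epsilon C)\rceil)$; the factoring argument then shows that $\cO_X \to F^{ne_0}_*\cO_X(\lceil (p^{ne_0}-1)\Delta + C\rceil)$ splits, which is condition (i) with $e=ne_0$. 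This last promotion to a large Frobenius power, relying on the eventually-periodic characterization of global sharp $F$-splitting in part (b) of that Proposition, is the only step that is not pure bookkeeping; the rest is manipulation of ceilings and the splitting-through-a-factor principle used repeatedly in this section.
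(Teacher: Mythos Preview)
Your proof is correct and follows essentially the same approach as the paper: both arguments reduce the corollary to Theorem \ref{LemmaSplitAlongAndComplementDivisorImpliesFRegular} by showing that condition (i) there is equivalent to global sharp $F$-splitting of $(X,\Delta+\epsilon C)$ for small $\epsilon$. The only difference is cosmetic: in the converse direction the paper first shrinks $\epsilon$ to $\tfrac{1}{p^g-1}$ so that at level $ng$ the coefficient $\tfrac{p^{ng}-1}{p^g-1}$ of $C$ is an exact integer, whereas you keep $\epsilon$ arbitrary and invoke part (b) of the preceding Proposition to pass to a large exponent $ne_0$ with $(p^{ne_0}-1)\epsilon \ge 1$, then use monotonicity of ceilings---a slightly cleaner bookkeeping, but the same idea.
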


\begin{remark} In fact, it turns out that  if $(X, \Delta)$ is globally $F$-regular, then for any effective $C$, we have that $(X, \Delta + \epsilon C)$ is also globally $F$-regular for sufficiently small $\epsilon$.
See Corollary \ref{CorGlobalFRegularityIsOpen}.
\end{remark}
\begin{proof}[Proof of Corollary \ref{alternate}]
Condition (i) of Theorem \ref{LemmaSplitAlongAndComplementDivisorImpliesFRegular} obviously implies global sharp $F$-splitting for the pair $(X, \Delta + \frac{1}{p^e-1}C)$, and this in turn implies   global sharp $F$-splitting for the pair
 $(X, \Delta +\epsilon C)$  for any $\epsilon \leq \frac{1}{p^e-1}$, by Lemma \ref{simple}.
 Conversely, if $(X, \Delta +\epsilon C)$ is globally sharply $F$-split, then by Lemma \ref{simple}, we may assume $\epsilon = {1 \over p^g - 1}$ for some sufficiently large $g$.  Now take $n>0$ such that the map
\[
 \cO_X \rightarrow F^{ng}_* \cO_X( \lceil (p^{ng}-1)( \Delta + \frac{1}{p^{g} -1} C )\rceil) = F^{ng}_* \cO_X( \lceil (p^{ng}-1) \Delta  \rceil + \left({p^{ng} - 1 \over p^g- 1}\right) C)
\]
splits.  Since  $C \leq \left({p^{ng} - 1 \over p^g- 1}\right) C$,  we also have a splitting for the map
\[
 \cO_X \rightarrow F^{ng}_* \cO_X( \lceil (p^{ng}-1) \Delta  \rceil + C).
\]
Then setting $e = ng$ we arrive at condition (i) of Theorem
\ref{LemmaSplitAlongAndComplementDivisorImpliesFRegular}, and the proof of the corollary is complete.
\end{proof}

\begin{proof}[Proof of Theorem \ref{LemmaSplitAlongAndComplementDivisorImpliesFRegular}]
 Let $X_C$ denote the open set complimentary to $C$.  Now fix any effective divisor $C'$ on $X$.
By hypothesis (ii), we can find $e'$ and an $\cO_X$-module homomorphism $\phi : F^{e'}_* \cO_{X_C}(\lceil (p^{e'} - 1)\Delta|_{X_C} + C'|_{X_C} \rceil) \rightarrow \cO_{X_C}$ that sends $1$ to $1$.   In other words, $\phi$ is a section of the reflexive sheaf
$$
 \sHom_{\cO_X}(F^{e'}_* \cO_{X}(\lceil (p^{e'} - 1)\Delta + C' \rceil), \cO_{X})
$$
over the open set $X_C$. Thus on the non-singular locus $U$ of $X$ (really, we need the Cartier locus of $C$), we can choose $m_0 > 0$ so that $\phi|_U$ is the restriction of a global section $\phi_m$ of
\begin{equation}
\label{EqnAGoodPsi}
\begin{split}
\sHom_{\cO_U}(F^{e'}_* \cO_{U}(\lceil (p^{e'} - 1)\Delta + C' \rceil), \cO_{U}) \tensor \cO_U(mC) \\
\cong \sHom_{\cO_U}(F^{e'}_* \cO_{U}(\lceil (p^{e'} - 1)\Delta + C' \rceil), \cO_{U}(mC))
\end{split}
\end{equation} over $U$,
for all $m \geq m_0$; see \cite[Chapter II, Lemma 5.14(b)]{Hartshorne}.  Note that $\phi_m$ still sends $1$ to $1$.  Now, since the involved sheaves are reflexive, this section extends uniquely to a global section of $\sHom_{\cO_X}(F^{e'}_* \cO_{X}(\lceil (p^{e'} - 1)\Delta + C' \rceil), \cO_{X}(mC))$, also denoted $\phi_m$ over the whole of $X$.

Consider an $m$ of the form $m = p^{(n-1)e} + \ldots p^e + 1$, where $e$ is the number guaranteed by hypothesis (i).
Tensoring{\footnote{See Remark \ref{reflexeasy} and footnote (3).}}  the map $\phi_m$ from Equation (\ref{EqnAGoodPsi}) with  $\cO_X(\lceil (p^{ne} - 1)\Delta \rceil)$, we have an induced map
\[
F^{e'}_* \cO_{X}(\lceil (p^{e'} - 1) \Delta \rceil + C' + p^{e'}\lceil (p^{ne} - 1)\Delta \rceil)  \rightarrow \cO_X(\lceil (p^{ne} - 1)\Delta + mC\rceil).
\]
Now, as in Lemma \ref{simple}, it follows that there is a map
\[
\psi : F^{e'}_* \cO_{X}(\lceil (p^{ne + e'} - 1) \Delta + C' \rceil)  \rightarrow \cO_X(\lceil (p^{ne} - 1)\Delta + mC \rceil)
\]
which sends $1$ to $1$.

By composing the splitting from hypothesis (i) with itself $(n-1)$-times and after twisting appropriately (compare with \cite[Proof of Lemma 2.5]{TakagiInterpretationOfMultiplierIdeals} and \cite{SchwedeFAdjunction}), we obtain a map
\[
\theta : F^{ne}_* \cO_{X}(\lceil (p^{ne} - 1) \Delta + (p^{(n-1)e} + \dots + p^{e} + 1)C\rceil) = F^{ne}_* \cO_X(\lceil (p^{ne} - 1) \Delta + mC \rceil) \rightarrow \cO_{X}
\] which sends $1$ to $1$.

Combining the maps $\theta$ and $\psi$, we obtain a composition
\[
\xymatrix{
F^{ne + e'}_* \cO_{X}(\lceil (p^{ne + e'} - 1) \Delta + C'\rceil) \ar[r]^-{F^{ne}_* (\psi_{\Delta})} & F^{ne}_* \cO_X(\lceil (p^{ne} - 1)\Delta + mC \rceil) \ar[r]^-{\theta} & \cO_X
}
\]
which sends $1$ to $1$ as desired. The proof is complete.
\end{proof}

\medskip
The next result is important, because it lets us ``deform" the divisor $\Delta$  in a globally $F$-regular pair so as to assume that the coefficients of $\Delta$ do not have denominators divisible by the characteristic $p$.
\begin{proposition}
\label{CorCanReplaceBByNice}
Let $(X, \Delta)$ be a  globally $F$-regular pair.  There exists an effective $\bQ$-divisor $\tld \Delta$ on $X$ such that:
\begin{itemize}
\item[(a)]  $\tld \Delta \geq \Delta$.
\item[(b)]  $(X, \tld \Delta)$ is globally $F$-regular.
\item[(c)]  No coefficient of $\tld \Delta$ has a denominator divisible by $p$ (in other words, there exists an integer $e_0 > 0$ such that $(p^e - 1)\tld \Delta$ is integral for all $e = ne_0$, $n > 0$ an integer).
\end{itemize}
The analogous statement for globally sharply $F$-split pairs also holds.
\end{proposition}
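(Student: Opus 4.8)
The plan is to produce $\tld\Delta$ by a single upward rounding of the coefficients of $\Delta$ and then to certify the global $F$-regularity of the result via the criterion of Theorem \ref{LemmaSplitAlongAndComplementDivisorImpliesFRegular}, rather than by manipulating splittings by hand. Let $C$ be the reduced effective divisor with $\Supp C=\Supp\Delta$. Since $(X,\Delta)$ is globally $F$-regular, Definition \ref{def1} (i) applied to the effective divisor $D=C$ gives an integer $e_0>0$ for which the natural map
\[
\cO_X \rightarrow F^{e_0}_*\cO_X\bigl(\lceil (p^{e_0}-1)\Delta\rceil + C\bigr)
\]
splits. Set $\tld\Delta:=\tfrac{1}{p^{e_0}-1}\lceil (p^{e_0}-1)\Delta\rceil$; equivalently, replace each coefficient $a_i$ of $\Delta$ by $\lceil (p^{e_0}-1)a_i\rceil/(p^{e_0}-1)$. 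Then (a) and (c) are immediate: $\lceil (p^{e_0}-1)a_i\rceil\ge (p^{e_0}-1)a_i$ forces $\tld\Delta\ge\Delta$; the coefficients of $\tld\Delta$ have denominator dividing $p^{e_0}-1$, hence prime to $p$; and $(p^{ne_0}-1)\tld\Delta=\tfrac{p^{ne_0}-1}{p^{e_0}-1}\lceil (p^{e_0}-1)\Delta\rceil$ is integral for every $n\ge 1$, so the integer demanded in (c) can be taken to be $e_0$.

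To prove (b) --- that $(X,\tld\Delta)$ is globally $F$-regular --- I would apply Theorem \ref{LemmaSplitAlongAndComplementDivisorImpliesFRegular} with this same $C$. Hypothesis (i) holds at level $e_0$ by construction: $(p^{e_0}-1)\tld\Delta=\lceil (p^{e_0}-1)\Delta\rceil$ is integral and so is $C$, whence $\lceil (p^{e_0}-1)\tld\Delta + C\rceil=\lceil (p^{e_0}-1)\Delta\rceil + C$, and the map required in (i) is precisely the splitting displayed above. Hypothesis (ii) asks that $(X\setminus C,\tld\Delta|_{X\setminus C})$ be globally $F$-regular; but $\Supp\tld\Delta=\Supp\Delta=\Supp C$, so $\tld\Delta|_{X\setminus C}=0$, and we need only that $X\setminus C$ itself is globally $F$-regular. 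This follows because $X$ is globally $F$-regular (Lemma \ref{simple} with $\Delta'=0$) and global $F$-regularity passes to open subschemes: for an effective divisor $D$ on $X\setminus C$, pick $e$ so that $\cO_X\rightarrow F^e_*\cO_X(\overline D)$ splits, $\overline D$ the closure of $D$ in $X$, and restrict that splitting to $X\setminus C$. (When $X$ is quasi-projective one may instead absorb a very ample divisor into $C$, making $X\setminus C$ affine and locally $F$-regular, which is the form of (ii) spelled out in the theorem.) Thus $(X,\tld\Delta)$ is globally $F$-regular, completing (b).

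The globally sharply $F$-split statement uses the identical construction but is easier: if $\cO_X\rightarrow F^{e_0}_*\cO_X(\lceil (p^{e_0}-1)\Delta\rceil)$ splits, then since $\lceil (p^{e_0}-1)\tld\Delta\rceil=\lceil (p^{e_0}-1)\Delta\rceil$ this very map also splits $\cO_X\rightarrow F^{e_0}_*\cO_X(\lceil (p^{e_0}-1)\tld\Delta\rceil)$, with no need for Theorem \ref{LemmaSplitAlongAndComplementDivisorImpliesFRegular}. The only point that is not pure bookkeeping with ceilings is hypothesis (ii) --- that deleting $\Supp\Delta$ preserves global $F$-regularity --- and I expect that to be the crux. (One cannot instead simply enlarge $\Delta$ to $\tld\Delta$ and transport the splittings of $(X,\Delta)$ directly, because the discrepancy $\lceil (p^e-1)\tld\Delta\rceil-\lceil (p^e-1)\Delta\rceil$ grows linearly in $p^e$ and so is not absorbed by any fixed auxiliary divisor; this is exactly the difficulty that the one-splitting criterion of Theorem \ref{LemmaSplitAlongAndComplementDivisorImpliesFRegular} sidesteps.)
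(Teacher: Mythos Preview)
Your proof is correct and follows essentially the same approach as the paper's: the same construction $\tld\Delta=\tfrac{1}{p^{e_0}-1}\lceil(p^{e_0}-1)\Delta\rceil$, the same choice of $C$ supported on $\Supp\Delta$, and the same appeal to Theorem~\ref{LemmaSplitAlongAndComplementDivisorImpliesFRegular}. The paper's argument is terser---it simply instructs the reader to ``use Theorem~\ref{LemmaSplitAlongAndComplementDivisorImpliesFRegular}''---whereas you have spelled out the verification of hypotheses (i) and (ii), including the observation that global $F$-regularity restricts to open subschemes by taking closures of divisors; this extra detail is sound and arguably helpful.
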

\begin{proof}
We first do the globally sharply $F$-split case.  Since $(X, \Delta)$ is globally sharply $F$-split, for some $e > 0$, the map
\[
\cO_X \rightarrow F^e_* \cO_X(\lceil (p^e - 1)\Delta \rceil)
\]
splits.  Set $\tld \Delta = {1 \over p^e - 1}\lceil (p^e - 1)\Delta \rceil$.  It is easy to see that $\tld \Delta$ satisfies properties (a), (b) and (c) above (or rather, instead of (b), we see that $(X, \tld \Delta)$ is globally sharply $F$-split).

Now suppose that $(X, \Delta)$ is globally $F$-regular.  Choose  an effective divisor $C$ whose support contains the support of $\Delta$.  Since $(X, \Delta)$ is globally $F$-regular, there exists an $e > 0$ so that the natural map
\[
\cO_{X} \rightarrow F^e_* \cO_{X}(\lceil (p^e - 1)\Delta + C \rceil)
\]
splits.  Again, set $\tld \Delta = {1 \over p^e - 1}\lceil (p^e - 1)\Delta \rceil$ and then use Theorem \ref{LemmaSplitAlongAndComplementDivisorImpliesFRegular}.
\end{proof}

\section{Positivity of globally $F$-regular varieties}

In this section, we prove our main prime characteristic result:  globally $F$-regular varieties are log Fano. We also establish what can be  loosely called  a ``limiting statement:"  globally sharply $F$-split varieties are log Calabi-Yau.  The converse statements are false, however some closely related statements are true in characteristic zero;  see Section \ref{SectionLogFanoImpliesFType}.
The main point is Theorem \ref{MainExistenceTheorem} below, which describes the geometry of globally F-regular pairs. Its proof occupies most of the section.


We first recall  the definitions of  log Fano and log Calabi-Yau.

\begin{definition}
Let  $X$ be a  normal irreducible variety, $\Delta$ an effective $\bQ$-divisor on $X$.
\begin{itemize}
\item[(i)] The pair $(X, \Delta)$ is
\emph{log Fano} if $-(K_X + \Delta)$ is ample and the pair $(X, \Delta)$ is Kawamata log terminal.
\item[(ii)]
Similarly, the pair $(X, \Delta)$ is
 \emph{log Calabi-Yau} if $K_X + \Delta$ is $\bQ$-linearly equivalent to zero and the pair $(X, \Delta)$ is log canonical.
 \end{itemize}
 We say that a normal variety is log Fano (or log Calabi-Yau) if there exists a $\bQ$-divisor $\Delta$ on $X$ such that $(X, \Delta)$ is log Fano (or log Calabi-Yau).

 \end{definition}
\smallskip
\begin{remark}\label{kltcharp}
Kawamata log terminal singularities and log canonical singularities are typically considered only in characteristic zero. However, these notions make sense in any characteristic by considering discrepancies along \emph{every} divisor  over $X$. Specifically, a pair $(X, \Delta)$, where $X$ is a normal variety and $\Delta$ is an effective $\bQ$-divisor, is {\it Kawamata log terminal (respectively log canonical)\/} if $K_X + \Delta$ is $\bQ$-Cartier, and for all proper birational maps $Y \overset{\pi}\rightarrow X$, writing $\pi^*(K_X + \Delta) = K_Y + \Delta'$ with $\pi_*K_Y = K_X$, we have that $\lfloor \Delta' \rfloor \leq 0$ (respectively, the coefficients of $\Delta'$ are at most one). In characteristic zero, one need check this condition only for one $\pi$, namely, for any log resolution of $(X, \Delta)$.  We often use the short-hand notation \emph{klt} (respectively \emph{lc}) instead of Kawamata log terminal (respectively, log canonical).
See  \cite[Definition 2.34]{KollarMori}. \end{remark}

We now state our main result:

\begin{theorem}\label{MainExistenceTheorem} Let $X$ be a normal scheme quasiprojective over an $F$-finite local ring with a dualizing complex and suppose that $B$ is an effective $\bQ$-divisor on $X$.
\begin{itemize}
\item[(i)]
 If the pair $(X, B)$ is globally $F$-regular, then there is an effective $\bQ$-divisor $\Delta$ such that $(X, B + \Delta)$ is globally $F$-regular with $K_X + B +\Delta$ anti-ample.
 \item[(ii)]
 Similarly, if $(X, B)$  is  globally sharply  $F$-split, then there exists an effective $\bQ$-divisor $\Delta$ such that
 $(X, B + \Delta)$ is globally sharply $F$-split with $K_X + B + \Delta$ $\bQ$-trivial.
\end{itemize}
In both (i) and (ii), the denominators of the coefficients of $B + \Delta$ can be assumed not divisible by the characteristic $p$.
\end{theorem}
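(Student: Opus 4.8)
The plan is to reduce both statements to finding a single carefully chosen effective divisor that both "absorbs" the negativity needed to make $K_X + B + \Delta$ (anti-)positive and remains compatible with the splitting hypothesis via Theorem \ref{LemmaSplitAlongAndComplementDivisorImpliesFRegular}. First I would invoke Proposition \ref{CorCanReplaceBByNice} to replace $B$ by a larger effective $\bQ$-divisor whose coefficients have denominators prime to $p$, so that there is a fixed $e_0$ with $(p^{ne_0}-1)B$ integral for all $n$; this handles the last sentence of the theorem in advance, since any $\Delta$ we add will be built out of integral divisors divided by $p^{e}-1$. So from now on assume $(p^e-1)B$ is integral for all $e$ in the relevant arithmetic progression.

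The heart of the argument is (i). Since $(X,B)$ is globally $F$-regular and $X$ is quasiprojective, there is a splitting of $\cO_X \to F^e_*\cO_X(\lceil (p^e-1)B\rceil + A)$ for a suitable effective (very) ample divisor $A$ and some $e$. Interpreting a splitting $\phi$ as a global section, the key is to unwind what the existence of $\phi$ says at the level of divisors: by the reflexive form of "Frobenius pushforward of the dualizing module" — using that $F^!\omega_X^\mydot \simeq \omega_X^\mydot$, which is exactly the point flagged at the end of Section 2 — a nonzero $\cO_X$-linear map $F^e_*\cO_X((p^e-1)B + A) \to \cO_X$ corresponds to a section of $F^e_*\cO_X((1-p^e)K_X - (p^e-1)B - A)$, hence (via Proposition \ref{PropSectionsAreSameAsEmbeddings}) to an effective divisor $D_e \sim_{\bQ} (1-p^e)(K_X+B) - A$. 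Dividing by $p^e-1$, I get an effective $\bQ$-divisor $\Gamma = \frac{1}{p^e-1}D_e$ with $K_X + B + \Gamma \sim_\bQ -\frac{1}{p^e-1}A$, which is anti-ample. So the "numerical" part is automatic; I then set $\Delta = \Gamma$ and must only check $(X, B+\Delta)$ is still globally $F$-regular.

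For that last check I would use Theorem \ref{LemmaSplitAlongAndComplementDivisorImpliesFRegular} with $C$ an effective divisor whose support contains $\Supp A \cup \Supp \Gamma$ (and $\Supp B$): on $X\setminus C$ the pair $(X\setminus C, (B+\Delta)|_{X\setminus C})$ agrees with $(X\setminus C, B|_{X\setminus C})$, which is globally $F$-regular by Lemma \ref{simple} restricted to an open set; and for condition (i) of that theorem I need a splitting of $\cO_X \to F^e_*\cO_X(\lceil (p^e-1)(B+\Delta) + C\rceil)$, which follows because $(p^e-1)(B+\Delta) + C = (p^e-1)B + D_e + C \geq (p^e-1)B + A$ once $C$ is chosen large enough that $D_e + C \geq A$ — wait, one must instead build $C$ so that $A \leq D_e + C$, i.e. take $C \geq A$, which is fine — and then the original splitting of $\cO_X \to F^e_*\cO_X((p^e-1)B + A)$ factors through it, so it splits too, exactly as in the proof of Lemma \ref{simple}. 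This gives global $F$-regularity of $(X, B+\Delta)$.

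Part (ii) is the same argument with "ample $A$" replaced by "no $A$": from a global sharp $F$-splitting $\cO_X \to F^e_*\cO_X(\lceil(p^e-1)B\rceil)$ I extract an effective $D_e \sim_\bQ (1-p^e)(K_X+B)$, set $\Delta = \frac{1}{p^e-1}D_e$, so $K_X+B+\Delta \sim_\bQ 0$, and then $(X,B+\Delta)$ is globally sharply $F$-split because $\cO_X \to F^e_*\cO_X(\lceil(p^e-1)(B+\Delta)\rceil)$ is (up to rounding, which I would absorb by passing to a multiple $ne_0$ of $e$ using Proposition \ref{CorCanReplaceBByNice}-style rounding and part (b) of the proposition on $P$-splitting) the same map we started with. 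The main obstacle I anticipate is bookkeeping the ceiling functions and reflexive twists cleanly — in particular making sure that "tensor, restrict to the regular locus, extend" (Remark \ref{reflexeasy}) is applied consistently so that the section $\phi$ really does produce an honest effective \emph{Weil} divisor $D_e$ on all of $X$, and that the integrality-of-denominators claim survives after dividing $D_e$ by $p^e-1$; once the divisor-theoretic dictionary for $\Hom(F^e_*\cO_X(\cdot),\cO_X)$ is set up correctly, everything else is a factoring argument of the type already used repeatedly in Section 3.
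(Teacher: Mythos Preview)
Your argument for part (ii) is essentially the paper's: dualize the splitting via $\sHom_{\cO_X}(-,\cO_X)$, read off an effective $D_e \sim (1-p^e)(K_X+B)$, and set $\Delta = \frac{1}{p^e-1}D_e$; the dual splitting itself exhibits $(X,B+\Delta)$ as sharply $F$-split. Fine.

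Part (i), however, has a genuine gap. Your plan is to take $\Gamma = \frac{1}{p^e-1}D_e$ with $D_e \sim (1-p^e)(K_X+B)-A$, which correctly makes $K_X+B+\Gamma$ anti-ample, and then to verify global $F$-regularity of $(X,B+\Gamma)$ via Theorem \ref{LemmaSplitAlongAndComplementDivisorImpliesFRegular}. But the factoring step is in the wrong direction. You need a splitting of
\[
\cO_X \longrightarrow F^e_*\cO_X\big((p^e-1)(B+\Gamma)+C\big)=F^e_*\cO_X\big((p^e-1)B+D_e+C\big),
\]
and you only have a splitting of $\cO_X \to F^e_*\cO_X((p^e-1)B+A)$. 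Since $(p^e-1)B+A \leq (p^e-1)B+D_e+C$ (for your choice of $C$), the \emph{larger} map factors through the \emph{smaller} one, not the other way around; and Lemma \ref{simple} says that a splitting of a composition gives a splitting of the first factor, not conversely. So the splitting you have does not produce the one you need.

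This is not just a cosmetic misstep; a single $\Gamma$ of this shape does not obviously work. The dual splitting does show $(X,B+\Gamma)$, and even $(X,B+\Gamma+\tfrac{1}{p^e-1}A)$, is globally sharply $F$-split; combined with global $F$-regularity of $(X,B)$ and Lemma \ref{CorollaryCombinationOfFRegularAndFPureImpliesFRegular}(iii) this yields global $F$-regularity only for $(X,B+\delta\Gamma)$ with $\delta<1$, and then $K_X+B+\delta\Gamma \sim_{\bQ} -\tfrac{\delta}{p^e-1}A - (1-\delta)\Gamma$ is anti-(ample $+$ effective), i.e.\ anti-big, but not anti-ample in general. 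The paper resolves exactly this tension by building \emph{two} divisors: a ``Calabi--Yau'' $\Delta_1$ from part (ii), then a very ample $H$ with $\Supp\Delta_1\subseteq\Supp H$, and a second $\Delta_2$ from $H$ as you do. The inclusion $\Supp\Delta_1\subseteq\Supp H$ together with the sharp $F$-splitting of $(X,B+\Delta_2+\tfrac{1}{p^f-1}H)$ and three applications of Lemma \ref{CorollaryCombinationOfFRegularAndFPureImpliesFRegular} then produce a convex combination $\Delta=\epsilon_1\Delta_1+\epsilon_2\Delta_2$ that is simultaneously globally $F$-regular and has $K_X+B+\Delta$ anti-ample. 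That combinatorial step is the missing idea in your proposal.
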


Given this theorem, Theorem 1.1  from  the  introduction follows easily. Indeed, since globally $F$-regular varieties are locally $F$-regular (and respectively, globally sharply $F$-split singularities are locally sharply $F$-split), Theorem 1.1  follows immediately from the following well-known theorem about $F$-singularities:

\begin{theorem}\cite[Theorem 3.3]{HaraWatanabeFRegFPure}
\label{TheoremHaraWatanabePairsImplication}
Let $(X, \Delta)$ be a pair, where $X$ is normal and essentially of finite type over a $F$-finite field, and $\Delta$ is  an effective $\bQ$-divisor such that $K_X + \Delta$ is $\bQ$-Cartier.
\begin{itemize}
\item[(i)]  If $(X, \Delta)$ is locally $F$-regular, then $(X, \Delta)$ is Kawamata log terminal.
\item[(ii)] If $(X, \Delta)$ is locally sharply $F$-split, then $(X, \Delta)$ is log canonical. \end{itemize}
\end{theorem}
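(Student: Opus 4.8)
\textbf{Proof strategy for Theorem \ref{TheoremHaraWatanabePairsImplication}.}
The statement is local on $X$, so we may assume $X=\Spec R$ with $R$ an $F$-finite normal local domain and $m(K_X+\Delta)$ Cartier for some $m>0$; we must bound discrepancies over $X$, and it suffices to treat one divisor at a time. The first step is to translate the splitting conditions of Definition \ref{def1} into surjectivity of a ``twisted Frobenius trace''. Since $F^!\omega_X^{\mydot}$ is quasi-isomorphic to $\omega_X^{\mydot}$, Grothendieck duality for the finite morphism $F^e$ gives, for any Weil divisor $G$, a natural identification of reflexive sheaves
\[
\sHom_{\cO_X}\!\bigl(F^e_*\cO_X(G),\cO_X\bigr)\;\cong\;F^e_*\cO_X\bigl((1-p^e)K_X-G\bigr).
\]
Taking $G=\lceil(p^e-1)\Delta\rceil$ (and $G=\lceil(p^e-1)\Delta\rceil+D$ in the $F$-regular case), this identifies the $\cO_X$-linear maps appearing in Definition \ref{def1} with sections of $F^e_*\cO_X((1-p^e)(K_X+\Delta))$ (resp.\ of $F^e_*\cO_X((1-p^e)(K_X+\Delta)-D)$), and ``sends $1$ to $1$'' becomes ``the canonical (twisted) trace map $F^e_*\cO_X((1-p^e)(K_X+\Delta))\to\cO_X$ is surjective''. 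In other words: global $F$-regularity (resp.\ sharp $F$-splitting) of the pair is surjectivity of these trace maps for all $D$ and some $e$ (resp.\ for some $e$).

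Next I bring in a divisor over $X$. Let $\pi\colon Y\to X$ be a proper birational morphism with $Y$ normal and $E\subset Y$ a prime divisor; set $V=\cO_{Y,\eta_E}$, a DVR with fraction field $K$ and closed point $P$, so $R\hookrightarrow V$. Using that $K_X+\Delta$ is $\bQ$-Cartier, write $\pi^*(K_X+\Delta)=K_Y+\Delta_Y$ with $\pi_*K_Y=K_X$, and put $b=\mathrm{mult}_E(\Delta_Y)$; by Remark \ref{kltcharp}, $(X,\Delta)$ is klt (resp.\ lc) precisely when $b<1$ (resp.\ $b\le 1$) for every such $E$. The key point is that the Frobenius trace is compatible with $\pi$: because $\pi_*\omega_Y=\omega_X$, the pushforward of the twisted trace on $Y$ is the twisted trace on $X$, so a surjective twisted trace upstairs on $X$ localizes at $\eta_E$ to a surjective map $F^e_*\bigl(t^{-\lceil(p^e-1)b\rceil}V\bigr)\to V$ carrying $1$ to $1$ (and, in the $F$-regular case, the analogous statement with an extra $t^{-m}$ for every $m$), where $t$ is a uniformizer of $V$. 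Equivalently: $(\Spec V,bP)$ inherits sharp $F$-splitting (resp.\ $F$-regularity) from $(X,\Delta)$.

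Finally one does the computation over a DVR. For a DVR $V$ with closed point $P$ and $\lambda\ge 0$, a direct calculation --- completing to reduce to $\kappa[[t]]$ and using that $F^e_*\kappa[[t]]$ is free, with the trace reading off the coefficient in the ``$t^{p^e-1}$'' slot --- shows that the twisted trace $F^e_*\bigl(t^{-\lceil(p^e-1)\lambda\rceil}V\bigr)\to V$ hits a unit if and only if $\lceil(p^e-1)\lambda\rceil\le p^e-1$, i.e.\ iff $\lambda\le 1$; hence $(\Spec V,\lambda P)$ is sharply $F$-split iff $\lambda\le 1$, and is $F$-regular iff $\lambda<1$ (the latter because $\lceil(p^e-1)\lambda\rceil+m\le p^e-1$ is solvable for every $m>0$ exactly when $\lambda<1$). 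Combined with the previous paragraph: if $(X,\Delta)$ is locally $F$-regular then $b<1$ for every $E$, so $(X,\Delta)$ is klt; if $(X,\Delta)$ is locally sharply $F$-split then $b\le 1$ for every $E$, so $(X,\Delta)$ is lc.

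\textbf{Main obstacle.} The crux is the middle step: making precise that a splitting for $(X,\Delta)$ really restricts, via $\pi$ and localization at $\eta_E$, to one for $(\Spec V,bP)$ with exactly the coefficient $b=\mathrm{mult}_E(\Delta_Y)$. This needs the $\bQ$-Cartier hypothesis (to make sense of $\Delta_Y$ as a $\bQ$-divisor) together with the equality $\pi_*\omega_Y=\omega_X$ (so no contribution to the trace is lost under pushforward) and care about the functoriality of Grothendieck duality along the non-finite-type map $R\to V$; in practice one phrases everything in terms of $p^{-e}$-linear maps (elements of $\sHom_{\cO_X}(F^e_*\cO_X,\cO_X)$, and their localizations/restrictions). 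A secondary, purely bookkeeping, issue is that $(p^e-1)\Delta$ need not be integral: one works throughout with round-ups, noting that $\lceil(p^e-1)\Delta\rceil-(p^e-1)\Delta$ is bounded independently of $e$, so it does not affect the asymptotic comparison --- and in case (ii) this is exactly why the \emph{sharp} $F$-splitting condition (defined via $\lceil(p^e-1)\Delta\rceil$) is the one matching log canonicity. Alternatively, in case (i) one may first deduce lc from (ii) (apply it to the sub-splitting with $D=0$) and then upgrade lc to klt using the extra slack ($D\neq 0$) afforded by $F$-regularity.
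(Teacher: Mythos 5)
The paper does not actually prove this theorem: it is quoted verbatim from Hara--Watanabe \cite[Theorem 3.3]{HaraWatanabeFRegFPure}, so there is no internal proof to compare against. Your sketch reconstructs the standard argument for that result, and it is correct in outline. The three ingredients --- (a) the duality identification $\sHom_{\cO_X}(F^e_*\cO_X(G),\cO_X)\cong F^e_*\cO_X((1-p^e)K_X-G)$, (b) the transformation/localization of a splitting at the generic point of a divisor $E$ over $X$, and (c) the DVR computation showing $(\Spec V,\lambda P)$ is sharply $F$-split iff $\lambda\le 1$ and $F$-regular iff $\lambda<1$ --- are exactly the ones in the literature, and (a) is the same "divisor dual to a splitting" trick the paper itself exploits in Section 4. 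Two points in your crux step (b) deserve to be made explicit. First, the cleanest way to run the localization is not to restrict "the trace twisted by $\Delta$" directly, but to attach to the chosen splitting $\phi$ its effective divisor $D_\phi\ge\lceil(p^e-1)\Delta\rceil$ with $D_\phi\sim(1-p^e)K_X$, set $\Delta_\phi=\frac{1}{p^e-1}D_\phi$, and note $K_X+\Delta_\phi\sim_{\bQ}0$ with index prime to $p$; the DVR computation applied to $\phi$ at $\eta_E$ then bounds $\mathrm{mult}_E(\Delta_{\phi,Y})$, and one descends to $\Delta_Y$ via $\Delta_Y\le\Delta_{\phi,Y}$, which is where the $\bQ$-Cartier hypothesis on $K_X+\Delta$ is genuinely used (to pull back the effective difference $\Delta_\phi-\Delta$). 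Second, in the $F$-regular case you must record that one can choose the auxiliary effective divisor $D$ (or its multiples) to have strictly positive order along $E$ --- e.g.\ any effective divisor containing the center of $E$ on $X$ --- so that the extra $t^{-m}$ really appears for every $m$. With those two remarks your argument closes; the alternative you mention at the end (deduce lc from the $D=0$ sub-splitting and upgrade using the slack from $D\ne 0$) is also fine and is essentially the same bookkeeping.
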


Before beginning the proof of Theorem \ref{MainExistenceTheorem}, we point out
a simple corollary.
\begin{corollary}\label{big}
The anticanonical divisor on a globally $F$-regular $\bQ$-Gorenstein  projective variety is big.
\end{corollary}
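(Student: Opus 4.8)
Corollary \ref{big} asserts that the anticanonical divisor on a globally $F$-regular $\mathbb{Q}$-Gorenstein projective variety is big.

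The plan is to deduce this directly from Theorem \ref{MainExistenceTheorem}(i), together with the elementary observation (noted already in the introduction) that a divisor which can be written as an ample divisor plus an effective divisor is big.

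Since $X$ is globally $F$-regular, the pair $(X, 0)$ is globally $F$-regular (see the remark following Lemma \ref{simple}), so Theorem \ref{MainExistenceTheorem}(i) applied with $B = 0$ produces an effective $\bQ$-divisor $\Delta$ such that $(X, \Delta)$ is globally $F$-regular and $K_X + \Delta$ is anti-ample; in particular $-(K_X + \Delta)$ is an ample $\bQ$-Cartier divisor. Because $X$ is $\bQ$-Gorenstein, $K_X$ is $\bQ$-Cartier, hence so is $\Delta = (K_X + \Delta) - K_X$ --- this is also what makes the bigness of $-K_X$ meaningful in the sense of the paper's definition. We therefore have a decomposition
\[
-K_X = \bigl(-(K_X + \Delta)\bigr) + \Delta
\]
of $-K_X$ as an ample $\bQ$-Cartier divisor plus an effective $\bQ$-Cartier divisor.

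To conclude, I would pick $m > 0$ clearing all denominators, so that $-mK_X$, $-m(K_X + \Delta)$ and $m\Delta$ are Cartier and $-m(K_X + \Delta)$ is ample. For every $n > 0$ the inequality $nm\Delta \geq 0$ gives an inclusion of sheaves $\cO_X(-nm(K_X+\Delta)) \hookrightarrow \cO_X(-nmK_X)$, whence
\[
h^0\bigl(X, \cO_X(-nmK_X)\bigr) \;\geq\; h^0\bigl(X, \cO_X(-nm(K_X + \Delta))\bigr),
\]
and the right-hand side grows like $n^{\dim X}$ since $-m(K_X + \Delta)$ is ample. Thus $-mK_X$ is a big Cartier divisor, so $-K_X$ is big. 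There is no essential difficulty in this argument; the only point deserving attention is the use of the $\bQ$-Gorenstein hypothesis, which is precisely what guarantees that $-K_X$ is $\bQ$-Cartier and that the "error term" $\Delta$ in the decomposition is again $\bQ$-Cartier.
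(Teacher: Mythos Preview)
Your proof is correct and follows essentially the same approach as the paper: apply Theorem \ref{MainExistenceTheorem} to write $-K_X$ as an ample $\bQ$-divisor plus an effective $\bQ$-divisor, and conclude bigness. The only differences are cosmetic: the paper cites \cite[Cor 2.2.7]{LazarsfeldPositivity1} for the ``ample plus effective implies big'' step, whereas you spell out the section-count argument; and you make explicit the role of the $\bQ$-Gorenstein hypothesis in ensuring $-K_X$ (and hence $\Delta$) is $\bQ$-Cartier, which the paper leaves implicit.
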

\begin{proof}
Theorem \ref{MainExistenceTheorem} implies that there exists an effective $\bQ$-divisor $\Delta$ such that $-K_X - \Delta$ is ample.  Therefore $-K_X$ is $\bQ$-linearly equivalent to an ample divisor plus an effective divisor,   which implies $-K_X$ is big by \cite[Cor 2.2.7]{LazarsfeldPositivity1}.
\end{proof}

\begin{remark}
Corollary \ref{big}  was known to experts (for example, see \cite[Remark 1.3]{HaraWatanabeYoshidaReesAlgebrasOfFRegularType}). One way to deduce it directly follows:  if $X$ is globally $F$-regular and  $H$ is ample, then there exists an $e$ so that $\cO_X((1 - p^e)K_X - H)$ has a non-zero global section corresponding to an effective divisor $D$.  Then $-K_X \sim_{\bQ} {1 \over p^e - 1}(H + D)$, which represents $-K_X$ as  ``ample plus effective." \end{remark}

\begin{example}\label{curve} A smooth projective curve is globally $F$-regular if and only if it has genus zero, as higher genus curves  have  canonical divisors with non-negative degree.  A smooth projective curve of genus higher than one is never $F$-split, whereas a curve of genus one is (globally sharply) $F$-split if and only if it is ordinary (non-supersingular); see \cite[Example 4.3]{SmithVanishingSingularitiesAndEffectiveBounds}.
 \end{example}

\begin{example} Ruled surfaces supply many examples of smooth surfaces with big anti-canonical divisor, where the ``difference" between $-K_X$ and the ample cone is too wide  to be breeched by a mildly singular effective  $\Delta$.    Indeed, a ruled surface is globally $F$-regular if and only if the base curve is $\bP^1$. We prove this in Example \ref{ruled}.
  \end{example}

The rest of this section is committed to the proof of  Theorem \ref{MainExistenceTheorem}. We begin with following crucial lemma,  interesting in its own right.

  \begin{lemma}
\label{CorollaryCombinationOfFRegularAndFPureImpliesFRegular}
Consider two pairs $(X, B)$ and $(X, D)$ on a normal $F$-finite scheme $X$.
\begin{itemize}
\item[(i)] If both pairs are globally sharply $F$-split,  then there exist   positive rational
numbers $\epsilon $ arbitrarily close to zero  such that the pair $(X, \epsilon B + (1 - \epsilon) D)$ is  globally sharply $F$-split.
\item[(ii)]
If  $(X, B)$ is globally $F$-regular  and $(X, D)$ is globally sharply $F$-split, then there exist   positive rational
numbers $\epsilon $ arbitrarily close to zero  such that the pair $(X, \epsilon B + (1 - \epsilon) D)$ is  globally $F$-regular.
\item[(iii)]
In particular, if $(X, B) $ is globally $F$-regular and  $(X, B + \Delta)$ is globally sharply $F$-split,  then $(X, B + \delta \Delta)$ is  globally $F$-regular for all rational $0 < \delta < 1$.
\end{itemize} In (i) and (ii), the number $\epsilon$ can be assumed to have denominator not divisible by $p$.
\end{lemma}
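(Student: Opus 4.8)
The plan is to reduce everything to a single explicit splitting computation and to exploit the ``factorization principle'' used throughout Section 3 (if a map splits, so does any map it factors through). First I would handle (i). Since $(X,B)$ is globally sharply $F$-split, there is $a>0$ such that $\cO_X \to F^a_*\cO_X(\lceil (p^a-1)B\rceil)$ splits; similarly there is $b>0$ for $D$. By the ``$e_0$-version'' in Proposition (b), we may arrange a common exponent $e$ so that \emph{both} $\cO_X \to F^{e}_*\cO_X(\lceil (p^{e}-1)B\rceil)$ and $\cO_X \to F^{e}_*\cO_X(\lceil (p^{e}-1)D\rceil)$ split (replace $a,b$ by a common multiple $e=ab$, using that the splitting condition is inherited by multiples). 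Now I would show that $(X,\epsilon B+(1-\epsilon)D)$ is globally sharply $F$-split for $\epsilon$ of the form $k/(p^e-1)$ with $0\le k\le p^e-1$: indeed I claim $F^{e}_*\cO_X(\lceil (p^e-1)(\epsilon B+(1-\epsilon)D)\rceil)$ receives a map from $F^e_*\cO_X(\lceil (p^e-1)B\rceil)$ tensored appropriately, and conversely the natural map $\cO_X\to F^e_*\cO_X(\lceil (p^e-1)(\epsilon B+(1-\epsilon)D)\rceil)$ factors through a twist of the split map for $B$. The clean way: the splitting for $B$ at level $e$ gives an $\cO_X$-linear $\phi_B\colon F^e_*\cO_X(\lceil(p^e-1)B\rceil)\to\cO_X$ with $\phi_B(1)=1$, and similarly $\phi_D$; twisting $\phi_B$ by $\cO_X(\lfloor k D\rfloor)$-type divisors and composing suitably (exactly as in the proof of Corollary \ref{alternate}) produces a splitting for the convex combination. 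Since we can take $e=ne_0$ for all $n$, the admissible values $k/(p^{ne_0}-1)$ are dense near $0$, giving $\epsilon$ arbitrarily close to zero with denominator $p^{ne_0}-1$, not divisible by $p$.

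Next, for (ii) I would invoke the criterion Theorem \ref{LemmaSplitAlongAndComplementDivisorImpliesFRegular}. Choose an effective divisor $C$ whose support contains the support of $D$ and such that $(X\setminus C,\, B|_{X\setminus C})$ is globally $F$-regular — this holds for \emph{any} $C$ since restriction preserves global $F$-regularity, so in particular $(X\setminus C, (\epsilon B+(1-\epsilon)D)|_{X\setminus C})$ is globally $F$-regular by Lemma \ref{simple} (on $X\setminus C$ the divisor $(1-\epsilon)D$ restricts to something $\le$ a multiple of $\epsilon B|_{X\setminus C}$ if we pick $C\supseteq \Supp D$... more carefully, on $X\setminus C$ we have $D|_{X\setminus C}=0$ when $\Supp D\subseteq \Supp C$, so $(\epsilon B+(1-\epsilon)D)|_{X\setminus C}=\epsilon B|_{X\setminus C}\le B|_{X\setminus C}$, and global $F$-regularity of $(X\setminus C, B|_{X\setminus C})$ plus Lemma \ref{simple} finishes condition (ii)). For condition (i) of that theorem I need an $e$ with $\cO_X\to F^e_*\cO_X(\lceil (p^e-1)(\epsilon B+(1-\epsilon)D)+C\rceil)$ split. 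Here I use global $F$-regularity of $(X,B)$: there is $e$ with $\cO_X\to F^e_*\cO_X(\lceil(p^e-1)B\rceil + C')$ split for a chosen effective $C'$ large enough to absorb $(p^e-1)(1-\epsilon)D + C - (p^e-1)\epsilon B$ plus rounding; combined with the sharp splitting for $D$ and the twist-and-compose trick, I extract the required splitting. Then Theorem \ref{LemmaSplitAlongAndComplementDivisorImpliesFRegular} gives global $F$-regularity of $(X,\epsilon B+(1-\epsilon)D)$, with $\epsilon$ of the form $k/(p^e-1)$, hence of denominator prime to $p$ and arbitrarily small.

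Part (iii) is then formal: given $0<\delta<1$ rational, write $B+\delta\Delta = (1-\delta)B + \delta(B+\Delta)$, which is a convex combination $\epsilon\,(\text{something})+(1-\epsilon)\,(\text{something})$ with the roles of ``$B$'' and ``$D$'' in (ii) played by $B+\Delta$ (globally sharply $F$-split by hypothesis) and... hmm — rather, apply (ii) with the globally $F$-regular pair being $(X,B)$ and the globally sharply $F$-split pair being $(X, B+\Delta)$: (ii) yields global $F$-regularity of $(X, \epsilon B + (1-\epsilon)(B+\Delta)) = (X, B + (1-\epsilon)\Delta)$ for a dense set of small $\epsilon$, i.e.\ for $1-\epsilon$ in a dense subset of $(0,1)$ near $1$; then Lemma \ref{simple} together with Corollary \ref{CorGlobalFRegularityIsOpen} (or a direct argument: global $F$-regularity of $(X,B+\delta_0\Delta)$ for $\delta_0$ close to $1$ implies it for all smaller $\delta$ by Lemma \ref{simple}) upgrades this to all rational $0<\delta<1$.

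\textbf{Expected main obstacle.} The delicate point is the bookkeeping in the twist-and-compose step that turns the splitting for $B$ (at level $e$) into a splitting for the convex combination $\epsilon B+(1-\epsilon)D$ with the extra divisor $C$ present, while controlling the ceilings $\lceil\,\cdot\,\rceil$ so that all the inclusions of reflexive sheaves $\cO_X(\text{smaller})\hookrightarrow\cO_X(\text{larger})$ go the right way. One must choose $\epsilon=k/(p^e-1)$ so that $(p^e-1)(\epsilon B+(1-\epsilon)D)$ has no fractional part coming from the $\epsilon$, push the remaining fractional contributions into an auxiliary effective divisor absorbed by $C'$, and verify the numerical inequality $p^f\lceil p^{e_0}(\cdots)\rceil + p^f(\cdots) \ge \lceil (p^{e_0+f}-1)(\cdots)\rceil + (\cdots)$ of the type appearing in the proof of Proposition (a). This is routine but must be done carefully; everything else is a direct appeal to Lemma \ref{simple}, Theorem \ref{LemmaSplitAlongAndComplementDivisorImpliesFRegular}, and the proof technique of Corollary \ref{alternate}.
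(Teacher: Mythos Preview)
Your overall architecture for (ii) and (iii) matches the paper's, but the core computational step in (i) has a genuine gap, and this gap propagates into (ii).

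In (i) you arrange a \emph{common} exponent $e$ and then assert that $(X,\epsilon B+(1-\epsilon)D)$ is sharply $F$-split for $\epsilon=k/(p^e-1)$, claiming that ``twisting $\phi_B$ by $\cO_X(\lfloor kD\rfloor)$-type divisors and composing suitably'' produces a splitting at level $e$. This does not work: two splittings at the same Frobenius level $e$ cannot be combined into a single splitting at level $e$ for a nontrivial convex combination. The paper's mechanism is different and essential: keep \emph{separate} exponents $e$ (for $B$) and $f$ (for $D$), twist the $B$-splitting $\phi$ by $\cO_X(\lceil(p^f-1)D\rceil)$, apply $F^f_*$, and then post-compose with the $D$-splitting $\psi$. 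This yields a splitting of
\[
\cO_X \longrightarrow F^{e+f}_*\cO_X\bigl(\lceil(p^e-1)B\rceil + p^e\lceil(p^f-1)D\rceil\bigr),
\]
hence of $\cO_X \to F^{e+f}_*\cO_X(\lceil(p^e-1)B + p^e(p^f-1)D\rceil)$, giving $\epsilon=(p^e-1)/(p^{e+f}-1)$ at level $e+f$. The jump in Frobenius level is the point; your proposed $\epsilon=k/(p^e-1)$ at level $e$ has no mechanism behind it.

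This gap recurs in (ii). Your attempt to verify condition (i) of Theorem~\ref{LemmaSplitAlongAndComplementDivisorImpliesFRegular} directly, by choosing $C'$ ``large enough to absorb $(p^e-1)(1-\epsilon)D+C-(p^e-1)\epsilon B$'', is circular: global $F$-regularity of $(X,B)$ lets you choose $e$ only \emph{after} fixing $C'$, but your $C'$ depends on $e$. The paper avoids this by first proving (i) cleanly, then using global $F$-regularity of $(X,B)$ only to get that $(X,B+\tfrac{1}{p^f-1}C)$ is sharply $F$-split, and applying part (i) to the pairs $(X,B+\tfrac{1}{p^f-1}C)$ and $(X,D)$; this produces the needed sharp $F$-splitting of $(X,\epsilon B+(1-\epsilon)D+\epsilon'C)$, after which your own argument on $X\setminus C$ (which is correct) and Corollary~\ref{alternate} finish. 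Your reduction of (iii) to (ii) via Lemma~\ref{simple} is fine, but do not cite Corollary~\ref{CorGlobalFRegularityIsOpen}, which is proved later using this very lemma.
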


\begin{proof}[Proof of Lemma \ref{CorollaryCombinationOfFRegularAndFPureImpliesFRegular}]
First note that (iii) follows from (ii) by taking $D$ to be $(B +  \Delta)$.  Since $(1-\epsilon)$ can be taken to be arbitrarily close to 1, we can choose it to exceed any given  $\delta < 1$. Hence, the pair $(X, B + \delta \Delta)$ is globally $F$-regular for all positive $\delta <1$, by Lemma \ref{simple}.

For (i), we  prove that we can take $\epsilon$ to be any rational number of the form
\begin{equation}\label{eps}
\epsilon = {p^e - 1 \over p^{(e + f)} - 1}
\end{equation}
where $e$ and $f$ are sufficiently large and divisible (but independent) integers.
Take  $e$  large and divisible enough so there exists a map $\phi : F^e_* \cO_X(\lceil (p^e -1) B\rceil) \rightarrow \cO_X$ which splits the map $\cO_X \rightarrow F^e_* \cO_X(\lceil (p^e -1) B\rceil)$.  Likewise,  take  $f$ large and divisible enough so there exists a map $\psi : F^f_* \cO_X(\lceil (p^f -1) D \rceil) \rightarrow \cO_X$ which splits the map $\cO_X \rightarrow F^f_* \cO_X(\lceil (p^f -1) D \rceil)$.

Consider the splitting
\[
 \xymatrix{
 \cO_X \ar[r] &  F^e_* \cO_X(\lceil (p^e - 1)B \rceil) \ar[r]^-{\phi} & \cO_X.
}
\]
Because all the sheaves above are reflexive and $X$ is normal, we can tensor{\footnote{on the regular locus, and extend as in Remark \ref{reflex}}}
with  $\cO_X(\lceil (p^f - 1) D \rceil$ to   obtain a  splitting
\[
 \xymatrix{
\cO_X(\lceil (p^f - 1) D \rceil ) \ar[r] &  F^e_* \cO_X(\lceil (p^e - 1)B \rceil + p^e \lceil(p^f - 1) D\rceil) \ar[r] & \cO_X(\lceil (p^f - 1) D \rceil).
 }
\]
Applying $F^f_*$ to this splitting, and then composing with $\psi$ we obtain the following splitting,
\[
 \xymatrix{
 \cO_X \ar[r] & F^{e+f}_* \cO_X(\lceil (p^e - 1)B \rceil + p^e \lceil (p^f - 1)D \rceil) \ar[r] & \cO_X
}
\]
However, we also note that
\[
\lceil (p^e - 1)B \rceil + p^e \lceil (p^f - 1)D \rceil \geq \lceil  (p^e - 1)B  + p^e  (p^f - 1)D \rceil
\]
which implies that we also have a splitting
\[
 \xymatrix{
 \cO_X \ar[r] & F^{e+f}_* \cO_X(\lceil (p^e - 1)B  + p^e (p^f - 1)D \rceil) \ar[r] & \cO_X
}
\]
If we then multiply $(p^e - 1)B + p^e(p^f - 1)D$ by ${1 \over p^{(e + f)} - 1}$, the proof of (i) is complete for the choice of $\epsilon $ given in line \ref{eps}.

Now, to prove (ii), we use Theorem
 \ref{LemmaSplitAlongAndComplementDivisorImpliesFRegular}.
  Choose an effective integral divisor $C$ whose support contains the support of $D$ and such that the pair $(X \setminus C, D|_{X \setminus C})$ is globally  $F$-regular. Since there exists a splitting of
\[
  \cO_X \rightarrow F^f_*\cO_X( \lceil (p^f - 1)B + C \rceil),
\]
 it follows  that the
 pair $(X, B + \frac{1}{p^f -1}C)$ is globally sharply $F$-split. Applying part (i) of the Lemma to the pairs
 $(X, B + \frac{1}{p^f -1}C)$ and $(X, D)$, we conclude that
\[
 (X, \epsilon(B + \frac{1}{p^f -1}C) + (1 - \epsilon)D)
\]
  is globally sharply $F$-split.
Re-writing, we have
\[
 (X, \epsilon B  + (1 - \epsilon) D + \epsilon' C)
\]
 is globally sharply $F$-split  for $\epsilon$ and $\epsilon'$ arbitrarily close to zero.

We now apply Theorem  \ref{LemmaSplitAlongAndComplementDivisorImpliesFRegular}
to the pair $(X, \Delta) =  (X, \epsilon B +  (1 - \epsilon) D)$. Restricted to $X\setminus C$, this pair is globally $F$-regular, and we've just shown that for sufficiently small $\epsilon'$, the pair $(X, \Delta+ \epsilon' C) $ is globally sharply $F$-split. Using Corollary \ref{alternate}, we conclude that $(X, \Delta)$ is globally $F$-regular.

Finally, note that because  of the explicit choice of  $\epsilon$
in line (3),  it is clear its denominator can be assumed not divisible by $p$.
\end{proof}

We are now ready to begin the proof of Theorem \ref{MainExistenceTheorem}. We first recall some important facts about duality, and describe the main idea.

\subsection{Frobenius and duality}
Let $X \overset{g}\rightarrow Y$ be a finite morphism of finite type schemes over an $F$-finite local ring with a dualizing complex.  If $X$ and $Y$ are Cohen-Macaulay and have the same dimension, one consequence of Grothendieck duality  is that for any coherent sheaf $\FF$ on $X$, there is a natural isomorphism
$$
g_* \sHom_{\cO_X}(\FF, \omega_X) \cong \sHom_{\cO_Y}(g_*\FF, \omega_Y).
$$
See \cite[Chapter III,\S 6 and Chapter V, Prop 2.4]{HartshorneResidues} for details, and Section \ref{canonical} for the definition of the canonical module.

It follows that  if $X$ and $Y$ are assumed normal instead of Cohen-Macaulay, we can make the same conclusion, by restricting to the Cohen-Macaulay locus (an open set whose compliment has codimension greater than two)  and then extending the isomorphism over the whole of $X$. (Cf. Discussion \ref{reflexeasy}.)

Now, if $X$ is a normal $F$-finite scheme of essentially finite type over an $F$-finite Gorenstein local ring where $X$ has canonical module $\omega_X$ (see Discussion \ref{canonical}), applying this to the Frobenius morphism yields
$$
F^e_* \sHom_{\cO_X}(\FF, \omega_X) \cong \sHom_{\cO_X}(F^{e}_*\FF, \omega_X).
$$

\subsection{Divisors dual to splittings}\label{key}
The key idea of the proof of Theorem \ref{MainExistenceTheorem} is the following.
Fix an effective divisor $D$ on a normal $F$-finite scheme $X$, and consider the   natural map $\cO_X \rightarrow F_*^e\cO_X(D)$. If this map splits, we have a composition
\begin{equation}\label{a}
\cO_X \rightarrow F_*^e\cO_X(D) \overset{s}\rightarrow \cO_X,
\end{equation}
which is the identity map on $\cO_X$. Applying the duality functor $\sHom_{\cO_X}(-, \omega_X)$, we have
a dual splitting
$$
\omega_X \leftarrow F_*^e \sHom_{\cO_X}(\cO_X(D), \omega_X) =F_*^e(\cO_X(-D +K_X))  \overset{s^{\vee}}\leftarrow \omega_X.
$$
Now, twisting again by $\omega_X^{-1}$ and using the projection formula for $F^e$ on the
middle term,{\footnote{As usual, we work with reflexive sheaves by restricting to the regular locus, where all reflexive sheaves are invertible, and then extend uniquely to reflexive sheaves on $X$.}}   we arrive at a new splitting
\begin{equation}\label{b}
\cO_X \leftarrow F_*^e\cO_X((1-p^e)K_X-D) \overset{s^{\vee}}\leftarrow \cO_X.
\end{equation}
Note that the functor we actually applied to the splitting (\ref{a}) to get (\ref{b}) can be succinctly described as $\sHom_{\cO_X}(\blank, \cO_X)   $.

Now, the map $s^{\vee}$ determines  a section of $ F_*^e\cO_X((1-p^e)K_X-D)$,  namely the image of the element $1$, and this section in turn determines a uniquely defined  effective Weil  divisor $D'$ on $X$. This divisor $D'$ is linearly equivalent to $(1-p^e)K_X-D$.  In fact, under the induced
 identification of $\cO_X((1-p^e)K_X-D)$ with the subsheaf $\cO_X(D')$ of the function field $K$, the  map $s^{\vee}$ recovers the natural map
 $$\cO_X \overset{s^{\vee}}\rightarrow F^e_*\cO_X(D'),$$
which we have just shown to split.  We are now ready to combine this technique with Lemma \ref{CorollaryCombinationOfFRegularAndFPureImpliesFRegular} into a construction  of the divisor
 $\Delta$ needed for Theorem \ref{MainExistenceTheorem}.

\begin{proof}[Proof of Theorem \ref{MainExistenceTheorem}.]
First, without loss of generality, we may assume that the $\bQ$-divisor $B$  has no denominators divisible by $p$. Indeed,
 choose  $\tld B \geq B$  as in Proposition \ref{CorCanReplaceBByNice}.  If we can construct $\tld \Delta$ satisfying the conclusion of Theorem \ref{MainExistenceTheorem} for the pair $(X, \tld B)$, then $\Delta = \tld \Delta + (\tld B - B)$ satisfies the conclusion of Theorem \ref{MainExistenceTheorem} for the initial pair $(X, B)$.  Therefore, we may assume that no coefficient of $B$ has $p$ in its denominator, or put differently, that the coefficients of  $(p^e -1)B $ are all integers, for all sufficiently large and divisible $e$.

We first prove statement (ii), which follows quite easily from the dualizing technique described in Paragraph \ref{key} above.  Suppose that $(X, B)$ is globally sharply $F$-split.  Consider a splitting
\[
 \xymatrix{
\cO_X \ar[r] & F^e_* \cO_X \ar[r] & F^e_* \cO_X((p^e - 1)B) \ar[r]^-{\phi} & \cO_X
}
\]
where $(p^e - 1)B$ is an integral divisor.
Apply $\sHom_{\cO_X}(\blank, \cO_X)$ to this splitting, as discussed in Paragraph \ref{key}.  We then obtain the following splitting,
\[
 \xymatrix{
\cO_X & \ar[l] F^e_* \cO_X((1-p^e)K_X) & \ar[l] F^e_* \cO_X((1-p^e) (K_X + B)) & \ar[l]_-{\phi^{\vee}} \cO_X.
}
\]
The image of $1$ under $\phi^{\vee}$ determines a divisor $D'$ which is linearly equivalent to $(1-p^e) (K_X + B)$.  This produces a composition
\begin{equation}
\label{EqnReverseSplitting}
 \xymatrix{
\cO_X & \ar[l] F^e_* \cO_X(D'+(p^e - 1)B) & \ar[l] F^e_* \cO_X(D') & \ar[l]_-{\phi^{\vee}} \cO_X.
}
\end{equation}
Set $\Delta_1 = {1 \over p^e - 1} D'$.  Then the pair $(X, B + \Delta_1)$ is globally sharply $F$-split
with the splitting given by  Equation (\ref{EqnReverseSplitting}). But also, it is log Calabi Yau, since
$$K_X + B + \Delta_1\sim_{\bQ} K_X + B + {1 \over p^e -1}(1 - p^e)(K_X + B) = 0.$$
This completes the proof of (ii).

More work is required to prove (i).
Suppose that $(X, B)$ is globally $F$-regular.  Then it is also globally sharply $F$-split, and we may pick  $\Delta_1$ as in (ii).   Choose $H$ to be a very  ample effective divisor such that $\Supp \Delta_1\subseteq \Supp H$.  Consider a splitting
\[
 \xymatrix{
\cO_X \ar[r] & F^f_* \cO_X(H)  \ar[r] & F^f_* \cO_X((p^f - 1)B + H) \ar[r]^-{\psi} & \cO_X,
}
\]
such that $(p^f - 1)B$ is integral.
Apply $\sHom_{\cO_X}(\blank, \cO_X)$ to this splitting to obtain a dual splitting,
\begin{equation}\label{c}
 \xymatrix{
\cO_X & \ar[l] F^f_* \cO_X((1-p^f) K_X - H) & \ar[l] F^f_* \cO_X((1-p^f) (K_X + B) - H) & \ar[l]_-{\psi^{\vee}} \cO_X
}
\end{equation}
The image of $1$ under $\psi^{\vee}$ determines a divisor $D''$ which is linearly equivalent to $(1-p^f) (K_X + B) - H$.  Set $\Delta_2 = {1 \over p^f - 1} D''$.  Note that
\[
K_X + B + \Delta_2 \sim_{\bQ}  {-1 \over p^f - 1} H
\]
which is anti-ample.  Also note that the splitting in line (\ref{c}) demonstrates the
 pair $(X, B+  \Delta_2)$ to be  globally sharply $F$-split. Even better, line (\ref{c}) also demonstrates
$(X, B+  \Delta_2 + \frac{1}{p^f - 1}H)$ to be globally sharply $F$-split.

We now make use of  Lemma \ref{CorollaryCombinationOfFRegularAndFPureImpliesFRegular} to complete the proof.  In addition to the globally $F$-regular pair $(X, B)$, we have constructed divisors $\Delta_1$ and $\Delta_2$  satisfying
\begin{itemize}
\item[(i)]  $(X, B + \Delta_1) $ is globally sharply $F$-split with $K_X + B + \Delta_1 \sim_{\bQ} 0$;  and
\item[(ii)]  $(X, B + \Delta_2) $ is globally sharply $F$-split with $K_X + B + \Delta_2$  anti-ample.
\item[(iii)] $(X, B + \Delta_2 + \delta H ) $ is globally sharply $F$-split for some small positive $\delta$.
\end{itemize}

Now we apply Lemma \ref{CorollaryCombinationOfFRegularAndFPureImpliesFRegular}(i) to the divisors described in (i) and (iii) above.  We thus fix positive rational numbers $\epsilon_1, \epsilon_2 $, with $\epsilon_1 + \epsilon_2 = 1$  such that
$$
 (X, \epsilon_1 (B + \Delta_1)  + \epsilon_2(B + \Delta_2 + \delta H)) = (X, B+  \epsilon_2  \Delta_2 + \epsilon_1 \Delta_1 + \epsilon_2 \delta H)
$$
is globally sharply $F$-split.
Since the support of $\Delta_1$ is contained in the support of $H$, it follows from Lemma \ref{simple} that
\begin{equation}\label{2}
(X, B+  \epsilon_2  \Delta_2 + (\epsilon_1 + \epsilon') \Delta_1 )
\end{equation}
is  globally sharply $F$-split for some small positive $\epsilon'$. But also $(X, B+  \epsilon_2  \Delta_2) $ is globally $F$-regular, as one sees by applying Lemma
 \ref{CorollaryCombinationOfFRegularAndFPureImpliesFRegular}(iii)  to the globally $F$-regular pair $(X, B)$   and the globally sharply $F$-split  pair $(X, B + \Delta_2) $.

Finally, another application of Lemma  \ref{CorollaryCombinationOfFRegularAndFPureImpliesFRegular}(iii),
this time to the globally $F$-regular pair $(X, B+  \epsilon_2  \Delta_2) $ and the globally sharply $F$-split pair of line (\ref{2}), implies that
$
(X, B+  \epsilon_2  \Delta_2 + \epsilon_1  \Delta_1)
 $
is globally $F$-regular.  Set $\Delta = \epsilon_1 \Delta_1 +  \epsilon_2 \Delta_2 $. We conclude that the pair $(X, B + \Delta)$ is globally $F$-regular, and
$$
K_X + B + \Delta = \epsilon_1(K_X + B + \Delta _1) +  \epsilon_2(K_X + B + \Delta _2)
$$
is anti-ample (from (i) and (ii) just above). This completes the proof of (i) and hence Theorem \ref{MainExistenceTheorem}. \end{proof}

\begin{remark} The divisor $\Delta$ constructed is such that
 there exists some $e > 0$ such that $(p^e - 1) (B + \Delta)$ is an integral divisor and such that  $(p^e - 1)(K_X + B + \Delta)$ is an integral Cartier divisor.
In particular, the characteristic $p$ does not show up in the denominator of any coefficient of
 $\Delta + B$\end{remark}

\begin{remark}
The proof of Theorem \ref{MainExistenceTheorem} actually shows that if $(X, B)$ is globally $F$-regular (but $X$ is not necessarily quasi-projective), then for every effective divisor $D$ on $X$, there exists an $\epsilon > 0$ and a $\bQ$-divisor $\Delta$ on $X$ such that $(X, B + \Delta)$ is globally $F$-regular and $-K_X - B - \Delta \sim_{\bQ} \epsilon D$.
\end{remark}

\begin{remark}
Variants and special cases of the  trick to construct $\Delta$ have been used before. For example, it is discussed in  \cite[Section 2]{MehtaRamanathanFrobeniusSplittingAndCohomologyVanishing} as a way for developing criteria for $F$-splitting of smooth projective varieties.  Similar ideas are used later by Hara and Watanabe in \cite[Second proof  of Theorem 3.3]{HaraWatanabeFRegFPure}).    Also see \cite{SchwedeFAdjunction} for other interpretations of such $\Delta$ in a local setting.
\end{remark}

\section{Log Fano implies globally $F$-regular type in characteristic zero}
\label{SectionLogFanoImpliesFType}

We now consider the converse: is every log Fano variety globally $F$-regular? N\"aively posed, the answer is ``No". For example, the  smooth cubic surface in $\bP^3$ defined by the equation $w^3+ x^3 + y^3 + z^3 $ is easily checked to be log Fano in every characteristic (except of course $3$), but {\it not\/} globally $F$-regular, or  even $F$-split,  in characteristic $2$.

On the other hand, this cubic surface  is globally $F$-regular over any field whose characteristic is at least 5. In other words,  if we consider the cubic surface over any field of characteristic zero, it has {\it globally $F$-regular type.\/} The converse to Theorem 1.1 does hold in characteristic zero:

\begin{theorem}\label{char0converse}  Let $X$ be a normal projective variety over a field of characteristic zero.
If $(X, \Delta)$ is a Kawamata log terminal pair such that $K_X + \Delta$ is anti-ample, then $(X, \Delta)$ has globally $F$-regular type.
\end{theorem}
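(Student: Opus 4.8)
\textbf{Proof strategy for Theorem \ref{char0converse}.}

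The plan is to reduce the statement about the projective pair $(X,\Delta)$ to a local statement about the singularity at the vertex of an affine cone, where the dictionary between Kawamata log terminal singularities and strong $F$-regularity is already well understood in the ``reduction mod $p$'' framework. First I would choose a sufficiently divisible ample Cartier divisor $H$ on $X$ such that $-(K_X+\Delta)\sim_{\bQ}$ a positive multiple of $H$ and such that $H$ itself defines a projectively normal embedding; this lets me form the section ring $S=\bigoplus_{n\geq 0}\Gamma(X,\cO_X(nH))$, an affine cone $Y=\Spec S$ over $X$, with vertex $\bm$. The $\bQ$-divisor $\Delta$ on $X$ pulls back (via the cone construction, away from the vertex and then extended) to a $\bQ$-divisor $\Delta_Y$ on $Y$, and the anti-ampleness of $K_X+\Delta$ is exactly what is needed to ensure that the pair $(Y,\Delta_Y)$ is Kawamata log terminal at the vertex --- this is the content of (the pair version of) Proposition \ref{coneoverlogFano}, which I would invoke. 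The key point is that the Fano condition, not merely the Calabi--Yau condition, guarantees klt at the vertex rather than merely log canonical.

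Next I would apply the standard dictionary theorem in characteristic zero: a klt pair $(Y,\Delta_Y)$ with $Y$ affine of finite type over a field of characteristic zero has strongly $F$-regular type, i.e. its reductions to characteristic $p\gg 0$ are strongly $F$-regular (this is the local theorem of Hara, and Hara--Watanabe in the pair setting; it is the affine counterpart of Theorem \ref{TheoremHaraWatanabePairsImplication}). Thus for a dense set of primes $p$, the reduction $(Y_p,(\Delta_Y)_p)$ is (locally, hence since it is a cone, globally by the affine case) $F$-regular, which is to say the section ring $S_p$ together with its boundary is $F$-regular. Then I would run Proposition \ref{Fregsectionring} in the reverse direction --- the analog of Proposition \ref{coneoverlogFano} for $F$-singularities --- to conclude that the pair $(X_p,\Delta_p)$ is globally $F$-regular for all $p$ in this dense set. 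That is precisely the assertion that $(X,\Delta)$ has globally $F$-regular type.

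There are a few bookkeeping subtleties rather than a single deep obstacle. The first is making the reduction-mod-$p$ apparatus compatible across the three objects simultaneously: the projective pair, the cone, and the section ring, so that ``$p\gg 0$'' can be chosen uniformly and the formation of the cone commutes with reduction to characteristic $p$ for almost all $p$. This is routine spreading-out (choose a finitely generated $\bZ$-subalgebra of the base field over which everything, including the ampleness, the klt resolution data, and the grading on $S$, is defined) but needs to be stated carefully. The second, and the step I expect to be the most delicate, is the cone correspondence itself in the \emph{pair} setting: one must check that $\Delta_Y$ is the right boundary (the coefficients transfer correctly, no extra discrepancy appears along the exceptional divisor of the blowup of the vertex beyond what the anti-ampleness controls) and that $K_Y+\Delta_Y$ is $\bQ$-Cartier with index prime to $p$ after reduction. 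These are exactly Propositions \ref{coneoverlogFano} and \ref{Fregsectionring}, so in the final write-up the theorem follows by assembling those two propositions with the characteristic-zero local dictionary; the present proof amounts to citing those and handling the uniform spreading-out.
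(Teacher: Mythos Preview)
Your approach is essentially identical to the paper's: use Proposition~\ref{coneoverlogFano} to pass to a section ring whose pair $(\Spec S, \Delta_S)$ is Kawamata log terminal, invoke the local characteristic-zero dictionary (klt $\Rightarrow$ $F$-regular type) for the affine cone, and then apply Proposition~\ref{Fregsectionring} to conclude that $(X,\Delta)$ has globally $F$-regular type. One minor correction on attribution: the direction you need (klt $\Rightarrow$ strongly $F$-regular type, for pairs) is not the ``affine counterpart'' of Theorem~\ref{TheoremHaraWatanabePairsImplication}, which records the reverse implication; the paper cites Takagi \cite[Corollary 3.4]{TakagiInterpretationOfMultiplierIdeals} for this, extending Hara's boundary-free theorem.
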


Roughly speaking, a pair $(X, \Delta)$ has globally F-regular type if, for almost all ``reductions mod $p$", the corresponding pairs over the various finite fields are globally F-regular. We do not include the definition here, but instead refer to the general discussion in  \cite[Section 1 and Definition 4.2]{SmithVanishingSingularitiesAndEffectiveBounds}, or to the related definitions of ``dense F-pure type" and so on in  \cite{HaraWatanabeFRegFPure}.

\begin{remark} \label{RemarkLogCYImpliesFSplit}
We also expect that if $(X, \Delta)$ is a log canonical pair such that $K_X + \Delta$ is $\bQ$-trivial, then $(X, \Delta)$ has  dense globally sharp $F$-split type, meaning that for a dense set of primes among all  reductions mod $p$, the corresponding pair is globally sharply F-split.  This would follow if we knew that every log canonical pair had dense $F$-pure type, see \cite[Problem 5.1.2]{HaraWatanabeFRegFPure}.
 \end{remark}

The proof of Theorem \ref{char0converse} is a non-trivial generalization of some of the main ideas of \cite{SmithGloballyFRegular}. The point is to reduce these statements about the global geometry of a projective variety $X$ to statements about the singularities at the vertex of the cone over $X$ for some projective embedding. We begin with a review of section rings for a projective variety.

\subsection{Section rings}\label{sectionrings}

Fix a normal (irreducible) projective variety $X$ over a field. For any ample invertible sheaf $\sL$ on $X$, we have a corresponding \emph{section ring\/}
\begin{equation}\label{sectring}
S = S(X, \sL) := \bigoplus_{i \in \bZ} H^0(X, \sL^i).
\end{equation}
The ring $S$ is an $\bN$-graded  normal domain, finitely generated over the field $k  = S_0$, such that  the projective $k$-scheme $\Proj S$ recovers $X$. See \cite[4.5.1]{EGA}. We denote its unique homogeneous maximal ideal, generated by the elements of positive degree, by $\frak m$. The ``punctured spectrum" of $S$ can be viewed as a $k^*$-bundle over $X$ via a natural flat``quotient map"
 \begin{equation}\label{puncture}
  \sU = \Spec S \setminus \frak m \overset{q}\rightarrow X
 \end{equation} where $k^* $ denotes the scheme $ \Spec k[t, t^{-1}]$.
 (This is more or less by definition when $S$ is generated in degree one; the general case is discussed, for example, in \cite[Prop 2.1(4)]{HyrySmithCoreVersusGradedCore}.)

There is a well-known correspondence between (isomorphism classes of) coherent sheaves on $X$ and finitely generated graded $S$-modules, ``up to isomorphism of the tails".
Under this correspondence, the invertible sheaves $\sL^n$ correspond to the
graded $S$-modules $S(n)$,  by which we  denote the rank one free  $S$-module whose generator has degree $-n$.  More generally,
 each finitely generated graded $S$-module $M$ determines a coherent sheaf $\widetilde M$ on $X$, whose value on the basic open set $D_+(f)$ is $[M \otimes_S S_f]_0$ (using the notation of \cite[Chapter II, Section 5]{Hartshorne}).
Of course, if $M$ and $M'$ agree in large degree, it follows that $\widetilde M = \widetilde M'$. On the other hand, given a coherent sheaf $\sF$  on $X$,  there is a {\it unique\/}
  {\it saturated\/} graded $S$-module
  $$
M_{\sF} = M(X, \sL, \sF) := \bigoplus_{i \in \bZ} H^0(X, \sF\otimes \sL^i)
$$
 in the class of all finitely generated graded $S$-modules determining $\sF$ on $X$.{\footnote{Proof:    Let $\mathcal U = \Spec S \setminus \frak m$, and let $M$ be any finitely generated graded $S$-module producing the coherent sheaf $\sF$ on $X$. Consider the exact sequence
 $$
0 \rightarrow H^0_{\frak m}(M) \rightarrow M \overset{restr.}\rightarrow  \Gamma(\mathcal U, \widetilde M) =
  \bigoplus_{i \in \N} H^0(X, \widetilde{M(i)})  \rightarrow H^1_{\frak m}(M) \rightarrow H^1(\Spec S, \widetilde M) = 0.
  $$
  Here, the equality  $\Gamma(\mathcal U, \widetilde M) =
  \bigoplus_{i \in \N} H^0(X, \widetilde{M(i)})$ follows by direct computation of the two sheaves on the open sets $D(f) \subset \Spec S$ and $D_+(f) \subset X$, for any homogeneous element $f \in S$ of positive degree (notation as in \cite{Hartshorne}).
  Since $\widetilde{M(i)} =
  \sF\otimes\sL^i$ on $X$, we see that $M$ is saturated (that is, that $H^1_{\frak m}(M) = 0$) if and only if it is isomorphic (as a graded module) to $M_{\sF}$.}}
  By saturated, here, we mean that depth of $M$ on the unique homogeneous maximal ideal $\frak m$ is at least two, or equivalently, that the local cohomology module $H^1_{\frak m}(M) = 0$.
Under this correspondence, maps between coherent sheaves on $X $ correspond to {\it degree preserving\/} maps of graded $S$-modules.
Importantly, the graded $S$-module corresponding to the canonical sheaf $\omega_X$ on $X$ is the graded canonical module $\omega_S$ for $S$, see \cite[p. 420]{SmithFujitasFreeness}. Note that reflexive modules are always saturated, so to check this, it suffices to verify that $\omega_S$ determines the sheaf $\omega_X$.

For more about the general theory of section rings, see \cite[II \S 5]{Hartshorne} (which covers the special case where $S$ is assumed to be generated by degree one elements), \cite{DemazureNormalGradedRings}, \cite[II \S 2]{EGA}, \cite{SmithVanishingSingularitiesAndEffectiveBounds}, \cite{SmithFujitasFreeness}, or \cite{HyrySmithCoreVersusGradedCore}.

\subsection{Divisors and section rings}\label{divoncone}
 Fix any section ring $S = S(X, \mathcal L)$ for the normal projective  variety $X$. For any
prime divisor $D$ on  $X$, there is a corresponding prime divisor $D_S$ on $\Spec S$.
There are three  useful ways to describe $D_S$:
\begin{itemize}
\item[(1)] Thinking of $D$ as corresponding to a height one homogeneous prime ideal of the graded ring $S$,  $D_S$ is the  divisor of $\Spec S$ corresponding to the same prime ideal.
\item[(2)] The divisor $D_S$ is the unique Weil divisor on $\Spec S$ such that the reflexive sheaf $\cO_S(D_S)$ determines the coherent sheaf $\cO_X(D)$. Put differently,
$$
\cO_S(D_S) = M_{\cO_X(D)} = \bigoplus_{n\in \bZ} H^0(X,  \cO_X(D)\otimes \mathcal L^n),
$$ as submodules of the fraction field of $S$.
\item[(3)]  The divisor $D_S$
is the unique  Weil divisor on $\Spec S$ agreeing on the punctured spectrum with the pullback $q^*D$, where $q: \Spec S \setminus \bm \rightarrow X$ is the $k^*$-bundle described in Paragraph \ref{sectionrings}. (Because $q$  is flat, Weil divisors can be pulled back to Weil divisors, whether or not they are Cartier.)
 \end{itemize}
It is easy to see that these are all equivalent descriptions of the same divisor; indeed, one need only observe that they all agree on the punctured spectrum of $\Spec S$, an open set of a normal irreducible scheme whose compliment has codimension two or more.   Clearly this procedure extends by linearity to any divisor on $X$, and even to any $\bQ$-divisor $\Delta$ on $X$, to produce a well-defined
 $\bQ-$divisor $\Delta_S$ on $\Spec S$. In particular,
for any Weil divisor $A$ on $X$, the reflexive sheaf $\cO_S(A_S)$ is the unique saturated graded $S$-module corresponding  to the sheaf $\cO_X(A)$.

We will eventually prove Theorem \ref{char0converse} by reducing to a corresponding statement about section rings. First, we state a characterization of globally $F$-regular pairs in terms of section rings, generalizing  \cite[Thm 3.10 (4)]{SmithGloballyFRegular}.

\begin{proposition}\label{Fregsectionring}
Let $X$ be  a normal projective variety $X$ over an $F$-finite field of prime characteristic.  For {\it any choice of ample invertible sheaf\/} $\sL$,  let $S$ denote the corresponding section ring.
Then
\begin{itemize}
\item[(1)] The pair $(X, \Delta)$ is globally $F$-regular if and only if $(\Spec S, \Delta_S)$ is globally $F$-regular.
\item[(2)] The pair
 $(X, \Delta)$ is globally sharply $F$-split if and only if $(\Spec S, \Delta_S)$ is globally sharply $F$-split.
\end{itemize}
\end{proposition}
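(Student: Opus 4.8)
The plan is to exploit the $k^*$-bundle $q : \sU = \Spec S \setminus \bm \to X$ from Paragraph \ref{sectionrings}, together with the correspondence between reflexive sheaves on $X$ and saturated graded $S$-modules described in Paragraph \ref{divoncone}. The essential point is that global $F$-regularity (resp. global sharp $F$-splitting) for a pair is a statement about the splitting of certain maps of reflexive sheaves, and such splittings can be detected on the punctured spectrum $\sU$ because the complement $\{\bm\}$ has codimension at least two in $\Spec S$ (here we use that $S$ is normal of dimension at least two, which we may assume, the case $\dim X = 0$ being trivial). So the first step is to translate both sides into statements about reflexive sheaves on $\Spec S$ and their restrictions to $\sU$, and then to descend along $q$.

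Concretely, I would proceed as follows. First, fix an effective divisor $D$ on $X$ and its associated divisor $D_S$ on $\Spec S$; conversely every effective divisor on $\Spec S$, after possibly enlarging it (using Lemma \ref{simple} and the fact that $S$ is graded, so any effective divisor is dominated by a sum of a homogeneous effective divisor and the pullback of an ample divisor from $X$), may be taken of the form $E_S + (\text{something supported on }V(\bm))$ — but since $V(\bm)$ has codimension $\geq 2$, only the $E_S$ part matters for reflexive-sheaf splittings. Thus it suffices to compare, for each effective $D$ on $X$, the splitting of
\[
\cO_X \to F^e_* \cO_X(\lceil (p^e-1)\Delta \rceil + D)
\]
with the splitting of
\[
\cO_S \to F^e_* \cO_S(\lceil (p^e-1)\Delta_S \rceil + D_S).
\]
The second step is the key computation: the graded $S$-module $\Gamma_*\big(\sU,\, q^*(\text{reflexive sheaf on } X)\big)$ recovers, under the saturation correspondence, the reflexive sheaf $\cO_S$ of the corresponding divisor on $\Spec S$, and this is compatible with $F^e_*$ because $q$ is flat and Frobenius commutes with flat base change (and $F^e_*$ of a reflexive sheaf is reflexive, by Proposition part (3)). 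Hence an $\cO_X$-linear splitting of the first map pulls back along $q$ and extends (using Proposition part (4), since $\sU \hookrightarrow \Spec S$ has complement of codimension $\geq 2$) to an $\cO_S$-linear splitting of the second, and conversely an $\cO_S$-linear splitting restricts to $\sU$ and then descends along the $k^*$-bundle $q$ to an $\cO_X$-linear splitting, because $q_* \cO_{\sU} = \bigoplus_{n} \sL^n$ and taking degree-zero parts of an $S$-linear (equivalently, $q_*\cO_\sU$-linear) map of graded modules produces the required $\cO_X$-linear map sending $1$ to $1$. The argument for global sharp $F$-splitting (part (2)) is identical, taking $D = 0$.

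I expect the main obstacle to be the bookkeeping around the ceiling functions and the graded structure: one must check that $\lceil (p^e-1)\Delta \rceil_S = \lceil (p^e-1)\Delta_S \rceil$ (this is immediate from description (3) of $\Delta_S$, since pulling back along the flat map $q$ and then taking closure commutes with taking ceilings divisor-component-wise), and that the section "$1 \mapsto 1$" condition transfers correctly in both directions — in the descent direction this requires noting that the degree-zero component of the relevant $\Hom$ of graded modules is exactly $\Gamma(X, \sHom_{\cO_X}(\ldots))$, which is where one genuinely uses that $S$ is the \emph{saturated} module. A secondary, more technical point is justifying that it suffices to test the $\Spec S$ condition only on divisors of the form $D_S$ rather than all effective divisors on $\Spec S$; this follows from Lemma \ref{simple} together with the observation that $\Spec S$ is quasi-projective over $k$, so any effective divisor is dominated by a $q^*$(very ample)$ + $(homogeneous) divisor, and again the part supported at the vertex is invisible to reflexive splittings. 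None of these steps is deep, but they must be assembled carefully; once assembled, both equivalences follow simultaneously.
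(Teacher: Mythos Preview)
Your overall strategy matches the paper's: translate splittings between $X$ and graded $S$-modules via the correspondence of Paragraph \ref{divoncone}, reduce to homogeneous divisors on $\Spec S$, and pass back by taking degree-zero components. But there is a real error in your key compatibility step. You assert that the passage $\sF \mapsto M_\sF$ is ``compatible with $F^e_*$ because $q$ is flat and Frobenius commutes with flat base change.'' This is false as stated: the square involving $F^e_X$, $F^e_{\sU}$, and $q$ commutes but is \emph{not} Cartesian, so flat base change does not yield $q^*F^e_*\sF \cong F^e_*q^*\sF$. Over an affine chart $\Spec A \subset X$ with $\sU|_{\Spec A} = \Spec A[t,t^{-1}]$, one computes directly that $F^e_*(M[t,t^{-1}]) \cong \bigoplus_{r=0}^{p^e-1}(F^e_*M)[t,t^{-1}]\cdot t^r$ as $A[t,t^{-1}]$-modules, so $q^*F^e_*\sF$ is only a direct \emph{summand} of $F^e_*q^*\sF$. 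Equivalently, $F^e_*M_\sF$ is naturally $\tfrac{1}{p^e}\bZ$-graded and $M_{F^e_*\sF}$ is only its integer-degree part; the paper isolates exactly this statement as a separate lemma (Lemma \ref{claim}) immediately before the proof.

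Fortunately this summand relation is all you need: since $1 \in S$ has degree zero, the map $S \to F^e_*\cO_S(A_S)$ factors through the integer-degree summand $[F^e_*M_{\cO_X(A)}]_{0\Mod\bZ}$, and hence the splitting your pullback-and-extend argument actually produces (namely a splitting of $S \to [F^e_*M]_{0\Mod\bZ}$) extends to a splitting of the full map by composing with the summand projection. With this correction your argument is correct and coincides with the paper's. (Your reduction to homogeneous divisors on $\Spec S$ is also slightly shaky as written---not every effective divisor on $\Spec S$ is obviously dominated by a homogeneous one; the cleanest route, which the paper takes, is to invoke Theorem \ref{LemmaSplitAlongAndComplementDivisorImpliesFRegular} with a single homogeneous $C$ for which $(\Spec S \setminus C, \Delta_S)$ is locally $F$-regular.)
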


Similarly, in characteristic zero, the  global geometry of $(X, \Delta)$ is  governed by the singularities of an affine cone over it. The following generalizes \cite[Proposition 6.2]{SmithGloballyFRegular} also see \cite[Proposition 4.38]{FujinoIntroductionToLMMPForLCBook}:

\begin{proposition}\label{coneoverlogFano}
Let $X$ be a normal projective variety over a field of characteristic zero. Then
\begin{itemize} \item[(1)]
The pair $(X, \Delta)$ is log Fano if and only if $X$ admits a section ring $S$ such that the
corresponding pair $(\Spec S, \Delta_S)$ is Kawamata log terminal.
\item[(2)] If the pair $(X, \Delta)$ is log Calabi-Yau then for every section ring $S$ of $X$,  the corresponding pair $(\Spec S, \Delta_S)$ is log canonical.  Conversely, if for some section ring $S$ the pair  $(\Spec S, \Delta_S)$ is log canonical, then there exists a $\bQ$-divisor $B$ on $X$ such that $(X, \Delta+B)$ is log Calabi-Yau.
\end{itemize}
\end{proposition}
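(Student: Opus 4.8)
The plan is to reduce the statement to a well-known comparison of discrepancies between a cone and its base, combined with the divisorial dictionary of Subsection \ref{divoncone}. The key geometric input is that the punctured spectrum $\sU = \Spec S \setminus \bm$ maps to $X$ via the flat $k^*$-bundle $q$ of line (\ref{puncture}), and that $\Spec S$ itself can be related to the total space of a line bundle. Concretely, let $Y$ be the total space of the line bundle $\sL^{-1}$ over $X$ (equivalently $\mathbf{Spec}_X \bigoplus_{i\geq 0} \sL^i$ when $S$ is generated in degree one, with an analogous weighted-blowup description in the general case); there is a natural contraction $\mu: Y \to \Spec S$ collapsing the zero section $E \cong X$ to the vertex, and $Y \setminus E \cong \sU$. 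Since $\sL$ is ample, $-E$ is $\mu$-ample, so $\mu$ is a genuine birational morphism with exceptional divisor $E$. The first step is to record the adjunction-type formula $K_Y = \mu^*K_{\Spec S} + aE$ together with $(K_Y + E)|_E = K_E = K_X$ and $E|_E \sim -\mathcal L\cdot(\text{const})$; tracking the $k^*$-action one computes the discrepancy $a$ of $E$ over $\Spec S$ explicitly in terms of $K_X$ and $\sL$. The upshot, which I would state as a lemma, is: $(\Spec S, \Delta_S)$ is klt (resp.\ lc) near the vertex if and only if $a - (\text{coefficient contribution of }\Delta_S\text{ along }E) > -1$ (resp.\ $\geq -1$), \emph{and} $(X,\Delta)$ is klt (resp.\ lc); and the first of these inequalities translates, via the explicit value of $a$, into positivity (resp.\ triviality) of $-(K_X + \Delta)$ relative to the polarization $\sL$.

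Granting this lemma, the two parts follow quickly. For (1): if $(X,\Delta)$ is log Fano, then $-(K_X+\Delta)$ is ample, so for a \emph{suitable} choice of $\sL$ — namely one with $\sL \sim_{\bQ} -r(K_X+\Delta)$ for some $r>0$, after clearing denominators so that $r(K_X+\Delta)$ is an integral Cartier divisor — the discrepancy of $E$ computed above is exactly the threshold value that makes $(\Spec S, \Delta_S)$ klt, using also that $X$ klt forces the cone to be klt away from $E$ via the bundle map $q$ (discrepancies pull back under the smooth morphism $\sU \to X$). Conversely, if $(\Spec S, \Delta_S)$ is klt for some section ring, then restricting to $\sU$ and pushing down along $q$ shows $(X,\Delta)$ is klt, while the klt condition \emph{at} the vertex forces the numerical inequality that says $-(K_X+\Delta)$ is positive with respect to $\sL$, hence ample since $\sL$ is ample. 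For (2): the forward direction is the same computation with the non-strict inequality, and it works for \emph{every} $\sL$ because $K_X+\Delta \sim_{\bQ} 0$ makes the threshold discrepancy independent of the polarization. For the converse in (2), lc-ness of $(\Spec S, \Delta_S)$ gives lc-ness of $(X,\Delta)$ plus a numerical condition that bounds $K_X+\Delta$ against $\sL$ from one side only; one then absorbs the discrepancy into an auxiliary effective $B$ (supported on a general member of $|\sL^m|$, say) so that $K_X + \Delta + B \sim_{\bQ} 0$ while preserving the lc property, yielding log Calabi-Yau.

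The main obstacle I anticipate is handling the case where $S$ is \emph{not} generated in degree one, since then $\Spec S$ is a quotient of the line-bundle total space by a finite group action (or one works with a weighted cone), and the resolution $\mu$ and the discrepancy computation for $E$ acquire correction terms. I would deal with this either by passing to the Veronese subring $S^{(d)}$ for suitable $d$ — which changes neither the klt/lc nor the log Fano/Calabi-Yau status, by standard arguments on cyclic quotients and because $\Proj S^{(d)} = \Proj S = X$ — thereby reducing to the generated-in-degree-one case, or by invoking the general cone description in \cite[Prop 2.1(4)]{HyrySmithCoreVersusGradedCore} and \cite{DemazureNormalGradedRings} directly. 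A secondary technical point is that in positive-characteristic-style arguments one is cautious about resolutions, but here we are in characteristic zero (as hypothesized), so log resolutions exist and Remark \ref{kltcharp}'s ``check one resolution'' formulation applies; the discrepancy bookkeeping is then routine. Finally, I would remark that the asymmetry in part (2) — ``only if'' for all $S$ but ``if'' only producing some $B$ — is genuine and parallels the asymmetry already visible in Theorem \ref{MainExistenceTheorem}(ii), where global sharp $F$-splitting yields log Calabi-Yau only after enlarging the boundary.
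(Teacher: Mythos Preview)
Your approach is essentially the same as the paper's: reduce to the case where $S$ is generated in degree one via a Veronese argument, resolve the vertex by the total space $Y$ of the tautological line bundle over $X$, compute the discrepancy of the exceptional divisor $E\cong X$ in terms of the relation between $K_X+\Delta$ and the polarization, and translate the resulting inequality into anti-ampleness (resp.\ $\bQ$-triviality) of $K_X+\Delta$. Your treatment of the converse in (2), absorbing the defect into a general $B\in \tfrac{1}{m}|m(-K_X-\Delta)|$, also matches the paper exactly.

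The one place your sketch needs tightening is the proof of your ``key lemma'' in the direction \emph{$(X,\Delta)$ klt/lc $\Rightarrow$ $(\Spec S,\Delta_S)$ klt/lc at the vertex}. Knowing the discrepancy of the single divisor $E$ and knowing that the bundle map $q:\sU\to X$ is smooth does not by itself control \emph{all} exceptional divisors centered at $\bm$; there could be further divisors on blowups of $Y$ along $E$. The paper handles this by inversion of adjunction: since $(X,\Delta)=(E,\Delta_Y|_E)$ is klt, the pair $(Y,\Delta_Y+E)$ is purely log terminal near $E$ (Koll\'ar--Mori, Theorem~5.50), hence $(Y,\Delta_Y+aE)$ is klt for the relevant $a<1$, hence $(\Spec S,\Delta_S)$ is klt; the lc case uses Kawakita's inversion of adjunction on log canonicity. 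Alternatively, since $\eta:Y\to X$ is an $\bA^1$-bundle with $\Delta_Y=\eta^*\Delta$ and $E$ the zero section, one can pull back a log resolution of $(X,\Delta)$ to obtain a log resolution of $(Y,\Delta_Y+aE)$ and compare discrepancies directly, bypassing the general inversion theorems. Either way, you should make this step explicit rather than folding it into the statement of the lemma.
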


\begin{remark}
We can not hope to show that {\it every\/} section ring of a log Fano variety  is Kawamata log terminal, except in the special case where the rank of the Picard group is one. Indeed, suppose $-K_X - \Delta = H$ is ample, and let $H'$ be any ample divisor such that $H$ and $H'$ represent $\bQ$-independent classes in Pic$(X) \otimes \bQ$. If $S$ is the section ring with respect to $H'$, then  the divisor $K_S + \Delta_S$ can {\it never\/} be $\bQ$-Cartier. Indeed, since $S$ and $K_S + \Delta$ are graded, if   $\cO_S(n(K_S + \Delta_S))$ is locally free, then it is free, and hence isomorphic to  $\widetilde{S(m)} =
\cO_S(mH')$ for some integer $m$. But this would imply $n(K_X + \Delta) = -n H \sim m H'$, a contradiction to the assumption that $H$ and $H'$ are independent.
\end{remark}

Before proving these propositions, we put them together into a proof of Theorem
 \ref{char0converse}.

\begin{proof}[Proof of Theorem \ref{char0converse}]
Let $(X, \Delta)$ be a log Fano pair.  Using Proposition \ref{coneoverlogFano}, construct a section ring $S = S(X, \sL)$ such that the corresponding  pair $(\Spec S, \Delta_S)$ is Kawamata log terminal.  In particular, $(\Spec S, \Delta_S)$ has globally $F$-regular type, by \cite[Corollary 3.4]{TakagiInterpretationOfMultiplierIdeals}.  It now follows from Proposition \ref{Fregsectionring} that $(X, \Delta)$ has  globally $F$-regular type.
\end{proof}

\subsection{Frobenius and graded modules}
To prove Proposition \ref{Fregsectionring}, we need to understand how  Frobenius interacts with the correspondence between coherent sheaves on $X$ and graded modules on a section ring.
 First let $M$ be any $\bZ$-graded module over a $\bN$-graded ring $S$.
 There is a natural way to grade $F^e_*M$ by $ \frac{1}{p^e}\bZ$. Specifically, if $m \in M$ is a degree $n$ element, then viewed as an element of  $F^e_*M$, we declare its degree to be $\frac{n}{p^e}$. With this grading, the natural action of $S$ on
$F^e_*M$ respects the degrees: $s \in S_i$ acts on $m \in [F^{e}_*M]_{\frac{n}{p^e}}$ to produce
$s^{p^e}m \in  [F^{e}_*M]_{\frac{n}{p^e} + i}$. Furthermore, the graded  $S$-module  $F^e_*M$ decomposes into a direct sum of graded $S$-modules $[F^e_*M]_{\frac{i}{p^e}\Mod\bZ}$ for $i $ ranging from $0$ to $p^{e}-1$.

 Now, if $S$ is a section ring for the normal projective variety $X$, then
  the summand made of integer-degree elements
  $[F^e_*M]_{0\Mod \bZ}$ plays a special role:

\begin{lemma}\label{claim} With notation as above, if  $M$ is the unique saturated graded $S$-module corresponding to a coherent sheaf $\sF$ on $X, $ then  $[F^e_*M]_{0\Mod \bZ}$ is the  unique graded S-module corresponding to $F^e_*\sF$.
\end{lemma}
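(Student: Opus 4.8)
The plan is to identify $[F^e_*M]_{0\Mod\bZ}$ directly with the canonical saturated graded $S$-module
\[
M_{F^e_*\sF}\;=\;\bigoplus_{n\in\bZ}H^0\bigl(X,(F^e_*\sF)\otimes\sL^{n}\bigr)
\]
attached to the coherent sheaf $F^e_*\sF$, which by the discussion of Paragraph \ref{sectionrings} is the \emph{unique} saturated graded $S$-module determining $F^e_*\sF$ on $X=\Proj S$. Thus it suffices to produce a degree-preserving isomorphism of graded $S$-modules between $[F^e_*M]_{0\Mod\bZ}$ and $M_{F^e_*\sF}$.

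First I would rewrite the right-hand side. Write $M=M_{\sF}=\bigoplus_{m}M_m$ with $M_m=H^0(X,\sF\otimes\sL^{m})$. Since $F^e\colon X\to X$ is a finite morphism and $F^{e*}\sL^{n}\cong\sL^{np^{e}}$, the projection formula gives a natural isomorphism $(F^e_*\sF)\otimes\sL^{n}\cong F^e_*\bigl(\sF\otimes\sL^{np^{e}}\bigr)$; and because $F^e$ is a homeomorphism, taking global sections identifies the degree-$n$ summand of $M_{F^e_*\sF}$ with $M_{np^{e}}$. Under this identification an element $s\in S_i=H^0(X,\sL^{i})$ acts by $\otimes\,F^{e*}s$, and $F^{e*}s$, regarded as a section of $\sL^{ip^{e}}$, is precisely the degree-$ip^{e}$ element $s^{p^{e}}$ of $S$. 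Hence, as a graded $S$-module, $M_{F^e_*\sF}\cong\bigoplus_{n\in\bZ}M_{np^{e}}$, where $s\in S_i$ carries the degree-$n$ summand $M_{np^{e}}$ into the degree-$(n+i)$ summand $M_{(n+i)p^{e}}$ by multiplication by $s^{p^{e}}$.

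Next I would unwind the left-hand side using the grading convention set up just above the lemma: an element of $M$ of degree $m$ has degree $m/p^{e}$ in $F^e_*M$, so $[F^e_*M]_{0\Mod\bZ}$ is spanned by the elements of $M$ whose degree is divisible by $p^{e}$, its degree-$n$ component being exactly $M_{np^{e}}$; moreover $s\in S_i$ acts on $F^e_*M$ by $m\mapsto s^{p^{e}}m$, and hence sends $M_{np^{e}}$ to $M_{(n+i)p^{e}}$. Comparing with the previous paragraph, the identity on $\bigoplus_{n}M_{np^{e}}$ is the sought degree-preserving isomorphism of graded $S$-modules $[F^e_*M]_{0\Mod\bZ}\cong M_{F^e_*\sF}$; in particular $[F^e_*M]_{0\Mod\bZ}$ is saturated, being isomorphic to the manifestly saturated module $M_{F^e_*\sF}$ (alternatively, it is a graded direct summand of $F^e_*M$, and local cohomology supported at $\bm$ commutes with the finite morphism $F^e$, so $H^{i}_{\bm}(F^e_*M)\cong F^e_*H^{i}_{\bm}(M)=0$ for $i\le 1$).

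The only delicate point in this plan is the compatibility of the two $S$-module structures: that the grading shift built into $F^e_*$ matches the grading of the section ring, and that the absolute Frobenius pullback sends a degree-$i$ element of $S$ to its $p^{e}$-th power in $S$. These are bookkeeping facts rather than substantive ones. A more geometric alternative would be to compute on the punctured spectrum $\sU=\Spec S\setminus\bm$, using the functoriality of the absolute Frobenius (so that $q\circ F^{e}=F^{e}\circ q$ as maps $\sU\to X$, where $q\colon\sU\to X$ is the $k^{*}$-bundle of Paragraph \ref{sectionrings}) to compare the restriction of $\widetilde{[F^e_*M]_{0\Mod\bZ}}$ to $\sU$ with $q^{*}(F^e_*\sF)$, and then extending over the codimension-two complement of $\sU$; this trades the explicit module computation for the analogous compatibility of Frobenius with the bundle structure.
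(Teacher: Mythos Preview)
Your proposal is correct and follows essentially the same route as the paper: compute $M_{F^e_*\sF}$ via the projection formula $(F^e_*\sF)\otimes\sL^n\cong F^e_*(\sF\otimes\sL^{np^e})$ to get $\bigoplus_n H^0(X,\sF\otimes\sL^{np^e})$, and observe that this is exactly the integer-degree summand of $F^e_*M$. You are more explicit than the paper in verifying that the $S$-module structures agree (via $s\mapsto s^{p^e}$) and in justifying saturation, but these are elaborations of the same argument rather than a different one.
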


\begin{proof}[Proof of Lemma]
We need only compute that
 $$
 M_{F^e_*\sF} = \bigoplus_{i\in \bZ}H^0(X,  F^e_*\sF \otimes \sL^{i}) = \bigoplus_{i\in \bZ}H^0(X,  F^e_*(\sF \otimes \sL^{ip^e}))  =  \bigoplus_{i\in \bZ}H^0(X, \sF \otimes \sL^{ip^e}),
 $$
 where the second equality follows from the   projection formula.
 This is precisely the integer degree summand of
 $F^e_*M$ with the grading described above.
 \end{proof}

 \begin{proof}[Proof of Proposition \ref{Fregsectionring}]
The pair $(X, \Delta)$ is globally $F$-regular if and only if, for all effective $D$, there is an $e$ such that the natural map
$$
\cO_X \rightarrow F^e_*\cO_X(\lceil (p^e -1) \Delta \rceil + D)
$$ splits.
Tensoring this splitting with the sheaf of $\cO_X$-algebras $\oplus \sL^{i} $ and taking global sections, we have a degree preserving splitting of
\begin{equation}\label{*}
S \rightarrow  M_{F^e_*\cO_X(\lceil (p^e -1) \Delta \rceil + D)} =
\left[ F^e_*M_{\cO_X(\lceil (p^e -1) \Delta \rceil + D)}\right]_{0 \Mod \bZ}
\end{equation}
and hence a splitting of
$$
S \rightarrow  F^e_*M_{\cO_X(\lceil (p^e -1) \Delta \rceil + D)}.
$$
Since the saturated graded $S$-module
$M_{\cO_X(\lceil ( p^e -1) \Delta \rceil + D)}$ is $\cO_S(\lceil (p^e -1) \Delta_S \rceil + D_S)$,  this splitting is also a splitting of
\begin{equation}\label{conesplitting}
S \rightarrow   F^e_*\cO_S(\lceil ( p^e -1) \Delta_S \rceil + D_S).
\end{equation}
But to check  global regularity for the affine scheme pair $(\Spec S, \Delta_S)$, it suffices to check the requisite splittings for all {\it homogeneous\/} effective divisors $D_S$.  This can be seen to follow from, for example, Theorem  \ref{LemmaSplitAlongAndComplementDivisorImpliesFRegular}.  We conclude that
 if $(X, \Delta)$ is globally $F$-regular, so is $(\Spec S, \Delta_S)$.

The converse statement follows from reversing the argument:  given a splitting of the map
 from line (\ref{conesplitting}) above, it can be assumed to be homogeneous. (Indeed, for any finitely generated $S$-module $N$, the module $\Hom_S(N, S)$ is graded, so any splitting $s$ would be a sum of its homogeneous components. Since the splitting takes $1$ to $1$, the degree zero component of the splitting is also a splitting.)
Since the splitting factors through the integer degree part, we also have a (degree-preserving) splitting of the map in line (\ref{*}). Because this induces a splitting of the corresponding map of sheaves on $X$, the proof of the converse direction is also complete.

Finally, we get the corresponding statement about global sharp $F$-splitting with the exact same argument, taking $D$ (and hence $D_S$) to be the zero divisor.
\end{proof}

We turn to the proof of  Proposition \ref{coneoverlogFano}.
The following lemma simplifies our argument by reducing us to the case where the section ring is generated in degree one.
\begin{lemma}\label{etale}
 Let $\mathcal L$ be an invertible sheaf on a normal projective variety $X$, and let $S$ and $S^{(k)}$ denote the section rings with respect to $\mathcal L$ and $\mathcal L^k$, respectively.
  Then:
 \begin{enumerate}
 \item
 For any positive integer $k$, the natural morphism (induced by the  inclusion $S^{(k)} \hookrightarrow S$)
$$
\Spec S \rightarrow \Spec  S^{(k)}
$$
is \'etale on the punctured spectra (that, is, on the open sets complimentary to the unique homogeneous maximal ideals of $S$ and $S^{(k)}$).
\item If   $\Delta_{S^{(n)}}$ and $\Delta_S$ are $\mathbb Q$-Weil divisors on
 $\Spec S^{(n)}$ and $\Spec S$ corresponding to the same  $\mathbb Q$-divisor on $X$ (Cf. Discussion \ref{divoncone}), then the pair $(\Spec S, \Delta_S)$ is klt if and only if
the pair $(\Spec S^{(k)}, \Delta_{S^{(k)}})$ is klt. An analogous statement holds  for log canonical pairs.
\end{enumerate}

\end{lemma}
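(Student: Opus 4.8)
The plan is to realise the natural map $g\colon\Spec S\to\Spec S^{(k)}$ attached to the inclusion of the $k$th Veronese subring $S^{(k)}=\bigoplus_n S_{nk}$ into $S$ as a finite surjective morphism that is \'etale in codimension one, and then to read (2) off from the standard invariance of klt and log canonical singularities under such morphisms, once the two log canonical divisors are matched up.

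\emph{Statement (1).} Every homogeneous $f\in S_d$ satisfies $f^{k}\in S_{dk}\subseteq S^{(k)}$, so $S$ is integral over $S^{(k)}$; being also a finitely generated $S^{(k)}$-algebra (it is generated over $S_0=S^{(k)}_0$ by finitely many homogeneous elements), $S$ is module-finite over $S^{(k)}$, hence $g$ is finite, and it is dominant, hence surjective. Since $g$ is graded one checks $g^{-1}(\bm_{S^{(k)}})=\{\bm_S\}$, so $g$ restricts to a morphism of punctured spectra $\sU_S\to\sU_{S^{(k)}}$, where $\sU_S=\Spec S\setminus\{\bm_S\}$ and similarly for $S^{(k)}$, with $\bm_S,\bm_{S^{(k)}}$ the homogeneous maximal ideals. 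I would then invoke the flat bundles $q_S\colon\sU_S\to X$ and $q_{S^{(k)}}\colon\sU_{S^{(k)}}\to X$ of Paragraph~\ref{sectionrings}: these are the $k^{*}$-bundles over $X$ attached to $\mathcal{L}$ and to $\mathcal{L}^{k}$ respectively, the triangle $q_{S^{(k)}}\circ g=q_S$ commutes (both composites being the canonical map to $\Proj S=\Proj S^{(k)}$), and $g$ is fibrewise the $k$th power map $t\mapsto t^{k}$ (as one sees from the inclusion of graded $\cO_X$-algebras $\bigoplus_n\mathcal{L}^{nk}\hookrightarrow\bigoplus_n\mathcal{L}^n$, which places the degree-one summand $\mathcal{L}^{k}$ of the former in degree $k$). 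Covering $X$ by affine opens $W$ on which $\mathcal{L}$ is trivial, $g$ restricts over each $W$ to the morphism $W\times\Spec k[t,t^{-1}]\to W\times\Spec k[t,t^{-1}]$ that is the identity on $W$ and $t\mapsto t^{k}$ on the fibre, and this is \'etale since the characteristic (here zero) does not divide $k$. Therefore $g$ is \'etale on $\sU_S$, and since its complement in $\Spec S$ is the single point $\bm_S$, of codimension $\dim X+1\geq 2$ (the case $\dim X=0$ being immediate, as then $S$ and $S^{(k)}$ are regular), $g$ is \'etale in codimension one.

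\emph{Statement (2).} First I would establish $K_S+\Delta_S=g^{*}(K_{S^{(k)}}+\Delta_{S^{(k)}})$. On $\sU_S$, description~(3) of Paragraph~\ref{divoncone} together with the commuting triangle gives $\Delta_S=q_S^{*}\Delta=g^{*}q_{S^{(k)}}^{*}\Delta=g^{*}\Delta_{S^{(k)}}$; and since $g$ is finite and \'etale in codimension one, the ramification formula gives $K_S=g^{*}K_{S^{(k)}}$ with no ramification divisor. Hence $K_S+\Delta_S=g^{*}(K_{S^{(k)}}+\Delta_{S^{(k)}})$, first on $\sU_S$ and then on all of $\Spec S$ by reflexivity, the vertex having codimension $\geq 2$ (cf.\ Discussion~\ref{reflexeasy}). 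Next, $K_S+\Delta_S$ is $\bQ$-Cartier if and only if $K_{S^{(k)}}+\Delta_{S^{(k)}}$ is: for an integral Weil divisor $A$ on $X$ the reflexive module $\cO_S(A_S)$ is the saturated graded module $M_{\cO_X(A)}$ (Paragraph~\ref{divoncone}), and, $S$ being graded with $S_0$ a field, $\cO_S(A_S)$ is invertible exactly when $M_{\cO_X(A)}\cong S(j)$ for some $j$, i.e.\ when $\cO_X(A)\cong\mathcal{L}^{j}$; thus, since $K_S+\Delta_S=(K_X+\Delta)_S$ (recall from Paragraph~\ref{sectionrings} that $\omega_S$ is the graded module of $\omega_X$, so $K_S=(K_X)_S$), the divisor $K_S+\Delta_S$ is $\bQ$-Cartier precisely when the class of $K_X+\Delta$ in $\mathrm{Cl}(X)_{\bQ}$ lies in the line $\bQ\cdot[\mathcal{L}]$. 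Running the identical computation for $S^{(k)}=S(X,\mathcal{L}^{k})$ yields the condition $[K_X+\Delta]\in\bQ\cdot[\mathcal{L}^{k}]=\bQ\cdot[\mathcal{L}]$, which is the same.

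With these in hand the conclusion is quick: if this common $\bQ$-Cartier condition fails, then neither pair is klt or log canonical and there is nothing to prove; otherwise $K_S+\Delta_S=g^{*}(K_{S^{(k)}}+\Delta_{S^{(k)}})$ for a finite surjective morphism $g$ that is \'etale in codimension one, whence $(\Spec S^{(k)},\Delta_{S^{(k)}})$ is klt (respectively log canonical) if and only if $(\Spec S,\Delta_S)$ is; see, e.g., \cite[Proposition~5.20]{KollarMori} and its standard log canonical analogue. I expect the main obstacle to be the behaviour at the cone point $\bm_S$, where $g$ is finite but in general not flat: one must establish that $g$ is \'etale in codimension one (which rests on the $k^{*}$-bundle description of $\sU_S$) and that the $\bQ$-Cartier property of $K+\Delta$ transfers between $S$ and $S^{(k)}$ (handled through the graded module description rather than through flatness). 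Once these are secured, the invariance of klt and of log canonical across the vertex is exactly the cited comparison of discrepancies under finite morphisms.
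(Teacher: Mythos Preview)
Your proof is correct and follows essentially the same approach as the paper's: both trivialize $\mathcal{L}$ over an affine cover of $X$ to identify the map on punctured spectra with the inclusion $A[t^k,t^{-k}]\hookrightarrow A[t,t^{-1}]$ (the $k$th power map on the torus fiber), and both deduce part~(2) from the identity $g^{*}(K_{S^{(k)}}+\Delta_{S^{(k)}})=K_S+\Delta_S$ together with \cite[5.20]{KollarMori}. Your explicit verification that the $\bQ$-Cartier condition on $K+\Delta$ transfers between $S$ and $S^{(k)}$ is a small but useful addition that the paper leaves implicit in its citation.
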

\begin{proof}
(1)
Choose an affine open cover of $X$ which trivializes $\mathcal L$. Thinking of $X = \Proj S^{(k)}$, we can assume that the sets in the cover have the form $D_+(x)$ for some $x \in S^{(k)}$. Here $D_+(x)
= \Spec  A$, where $A$ is the ring $\left[S^{(k)}[\frac{1}{x}]\right]_0 = \left[S[\frac{1}{x}]\right]_0$,  the notation from \cite[II \S5]{Hartshorne}.
We will show that the inclusion
$$
S^{(k)}[\frac{1}{x}] \subset S[\frac{1}{x}]
$$
is \'etale.  Since the $D(x_i)$ cover the punctured spectra of $S$ and $S^{(k)}$, this will show that
$$
\Spec S \rightarrow \Spec S^{(k)}
$$ is \'etale on the punctured spectra.

Since $\mathcal L$ is trivial on $D_+(x)$, we can fix some generator $t$, which is a degree one element of $ S[\frac{1}{x}]$. Note that  all the powers of $\mathcal L$ are trivial on $D_+(x)$ and that $t^n$ generates $\mathcal L^n$ on $D_+(x)$ for all integers $n$.  Thinking of $X$ as $\Proj S$,  the sheaves $\mathcal L^n$ correspond to the graded $S$-modules $S(n)$, so we have
$$
S[\frac{1}{x}] = \bigoplus_{n \in \bZ} \mathcal O_X(D_+(x)) t^n = A[t, t^{-1}].
$$
On the other hand, thinking of $X$ as $\Proj S^{(k)}$, we have
$$
S^{(k)}[\frac{1}{x}] = \bigoplus_{n \in \bZ} \mathcal O_X(D_+(x)) t^{kn} = A[t^k, t^{-k}].
$$
So the inclusion
$
S^{(k)}[\frac{1}{x}] \subset S[\frac{1}{x}]
$ can be identified with the inclusion
$$
A[t^k, t^{-k}] \subset A[t, t^{-1}],
$$ which is obviously \'etale, as it is given by the polynomial $T^k - a$, where $a = t^k \in  A[t^k, t^{-k}]$,  whose derivative is invertible.

(2).
Because  $S$ has dimension at least two, the finite map $$
f: \Spec S \rightarrow \Spec S^{(k)}
$$ is \'etale in codimension one. We  now easily check that both $f^*K_{S^{(k)}} = K_{S}$ and  $f^*(\Delta_{S^{(k)}}) = \Delta_S$.
We can now compare discrepancies directly, or simply deduce the desired claim  from
\cite[5.20 (4)]{KollarMori}.
\end{proof}

\subsection{Blowups of section rings and the tautological bundle}\label{naturalconstruction}
We now set up some notation and recall a basic construction we use in the proof of Proposition \ref{coneoverlogFano}; for details see \cite[8.7]{EGA} or  \cite[Section 6.6]{HyrySmithOnANonVanishingConjecture}.
 Let $S$ be a section ring of a normal projective variety $X$. For simplicity, we assume that
$S$ is generated in degree one (which amounts to assuming $\sL$ is very ample and for a projectively normal embedding).{\footnote{Though we don't need it here, everything in this section, including Diagram (\ref{EqnTautologicalBundleDiagram}),  holds true for any section ring if instead of blowing up $\frak m$, we take $Y$ to the be the graded blowup $\Proj S^{\natural}$ of $\Spec S$; See \cite[Section 6.6]{HyrySmithOnANonVanishingConjecture}.}}

Let $Y \overset{\pi}\rightarrow \Spec S$ denote the blowup of the cone $\Spec S$ at the vertex $\frak m$. The exceptional divisor of $\pi$  is naturally identified with the projective variety $X$;
 under this identification, restricting to the exceptional set, the sheaf $\cO_Y(-X)$ becomes  the ample invertible sheaf $\sL$.

The scheme $Y$ can also  be interpreted as  the total space of the tautological bundle for the embedding of $X$ given by $\sL$. That is, there is a natural isomorphism $Y \cong \Spec_{\cO_X}{(\oplus_{i \geq 0}\sL^i)}$, and the natural projection $Y \overset{\eta}\rightarrow  X$ defines a line bundle on $X$ whose sheaf of sections recovers $\sL^{-1}$.   Thinking of $Y$ in this way,  the inclusion of the exceptional divisor $X$ in
$Y$ becomes the inclusion of the zero section of the bundle in $Y$. In particular, the composition
$\pi^{-1}(\frak m) = X \overset{j}\hookrightarrow Y \overset{\eta}\rightarrow X $ is the identity map on $X$.
This is summarized by the following diagram
\begin{equation}
\label{EqnTautologicalBundleDiagram}
\xymatrix{
X \ar@{^{(}->}[r]^-{j} \ar[d] & Y \ar[r]^{\eta} \ar[d]^{\pi} & X \ar@{<-}[d]^{q} \\
\Spec (S / \bm) \ar@{^{(}->}[r] & \Spec S \ar@{<-^{)}}[r] & \Spec S \setminus \{\frak m\}
}
\end{equation}

\begin{proof}[Proof of Proposition \ref{coneoverlogFano}]
We begin with some general observations.
Fix any normal projective $X$ and effective $\mathbb Q$ divisor $\Delta$ on $X$. Let $S$ be any section ring, say with respect to the ample divisor $H$, and let $\Delta_S$ be the corresponding $\mathbb Q$-divisor on $\Spec S$. We want to prove that conditions on the  singularities of $(\Spec S, \Delta_S)$ are equivalent to conditions on the singularities {\it and positivity\/} of $(X, \Delta)$.
According to Lemma \ref{etale}, there is no loss of generality in assuming $S$ is generated in degree one.

First note that  $(X, \Delta)$ is klt (or lc) if and only if
$(\Spec S, \Delta_S)$ is klt (or lc) on the punctured spectrum. Indeed,
 $\Spec S \setminus \frak m =  \mathcal U$ is a $k^*$-bundle over $X$, so
  this follows  directly from the definitions after trivializing $q: \mathcal U \rightarrow X$.{\footnote{Alternatively, one can make a general statement about the behavior of pairs under smooth morphisms.}} The issue is then that conditions on the singularities of $(\Spec S, \Delta_S)$ at the point $\frak m$ correspond to positivity conditions on the pair $(X, \Delta).$

Suppose that $K_S + \Delta_S$ is $\mathbb Q$-Cartier on $\Spec S$.  This means there is a (positive) integer $n$ such that the reflexive $S$-module corresponding to the integral Weil divisor $n(K_S + \Delta_S)$ is locally free of rank one. But, since this module is graded, locally free implies free, and so the unique reflexive graded $S$-module corresponding to $n(K_S + \Delta_S)$ is isomorphic to $S(t)$, for some degree shift $t \in \mathbb Z$.
In particular, $K_X + \Delta \sim_{\bQ} \frac{t}{n} H $ is either ample, anti-ample or $\mathbb Q$-trivial, depending on whether $t$ is positive, negative or zero.

Now, let $Y \overset{\pi}\rightarrow \Spec S$ be the blow up of $\Spec S$ at the maximal ideal $\frak m$, as discussed in Paragraph \ref{naturalconstruction}, with notation as in
Diagram (\ref{EqnTautologicalBundleDiagram}).
 We claim that
 \begin{equation} \label{calc}
\pi^*(K_S + \Delta_S) \sim_{\bQ} K_Y + \Delta_Y + (1+ \frac{t}{n}) X,
\end{equation}
where  $\Delta_Y$ is  the birational transform $\pi^{-1}_*\Delta_S$ of $\Delta_S$ on $Y$.
To prove this, fix $K_Y$ such that $\pi_*K_Y = K_S$ and write
$$
\frac{1}{n}\pi^*(n(K_S + \Delta_S)) = K_Y + \Delta_Y + a X
$$
for some $a \in \bQ$.  Now restrict  to the exceptional divisor $X$.  Since ${X}|_{X} = -H$, the adjunction formula gives
\begin{equation}\label{discrep}
 0 \sim_{\bQ}  (K_Y + X)|_{X} +  {\Delta_Y}|_{X} + (a-1) X|_{X} \sim_{\bQ} K_X + \Delta + (a-1)(-H),
  \end{equation}
  after we've verified that  $ {\Delta_Y}|_{X} = \Delta$. To this end, note that
   since $\Delta_S$ is the unique divisor agreeing with $q^*\Delta$ on
$\Spec S \setminus \{\frak m\} \cong Y \setminus X$, its birational transform on $Y$ is the closure of $q^*\Delta$ in $Y$, which is to say, $\Delta_Y = \pi^{-1}_* \Delta_S = \eta^*\Delta$.
  This implies that
$$
 {\Delta_Y}|_{X} = j^*\eta^*\Delta = (\eta \circ j)^* \Delta = \Delta,
$$
since $\eta \circ j$ is the identity map on $X$.
(Cf. Diagram (\ref{EqnTautologicalBundleDiagram}).)
  Remembering that $-K_X - \Delta \sim_{\bQ} \frac{t}{n} H$, formula (\ref{discrep}) yields $a = 1 + \frac{t}{n}$.

Now we are ready to prove Proposition \ref{coneoverlogFano}. Suppose that $(S, \Delta_S) $ is klt. In particular, from (\ref{calc}) it follows that
$(1+ \frac{t}{n}) < 1.$ This implies that $t $ is negative, and therefore that $K_X + \Delta \sim_{\bQ} \frac{t}{n}H$ is anti-ample. Similarly, if    $(S, \Delta_S) $ is lc,  then $t \leq 0$.  If $ t = 0$,  then
$K_X + \Delta$ is $\mathbb Q$-trivial, and we can take $B = 0$. Otherwise, $-K_X -\Delta$ is ample, and we  take $B$ to be $\frac{1}{m}$ for a general $G$ in the very ample linear system $|m(-K_X - \Delta_X)|$ for large $m$.  The pair $(X, \Delta + B)$  is log canonical by \cite[Lemma 5.17(2)]{KollarMori}.
This concludes the ``if direction" of the proof of Proposition \ref{coneoverlogFano} for both parts (1) and (2).

We now prove the converse. First, assume $(X, \Delta)$ is klt with $-K_X - \Delta$ ample.
Fix any positive integer $n$  such that $n(-K_X - \Delta)$ is an ample integral Cartier divisor $H$, and let $S$ be the section ring with respect to $H$. We need to show that $(\Spec S, \Delta_S)$ is klt in a neighborhood of $\frak m$. Without loss of generality, we assume $S$ is generated in degree one. Note that $K_S +  \Delta_S $ is $\bQ$-Cartier, with $n(K_S +  \Delta_S)$ corresponding to the free $S$-module $S(-1)$.

From (\ref{calc}), we see that
 $(\Spec S, \Delta_S)$  is klt at $\frak m$  if and only if  $(Y, \Delta_Y + (1 - \frac{1}{n}) X)$ is klt  in a neighborhood of $X$ \cite[Lemma 2.30]{KollarMori}.  The latter pair is  klt if the pair $(Y,   \Delta_Y + X)$ is purely log terminal; see \cite[Def 2.34]{KollarMori}.  By inversion of adjunction, the pair $(Y,   \Delta_Y + X)$ is purely log terminal  (in a neighborhood of $X$)
 since the pair $(X,  {\Delta_Y}|_{X}) = (X, \Delta)$ is klt
(see \cite[Theorem 5.50]{KollarMori}).
  Therefore $(\Spec S, \Delta_S)$ is Kawamata log terminal as desired.

Now, assume that $(X, \Delta)$ is log Calabi-Yau. Let $S$ be any section ring, without loss of generality generated in degree one.    We need to show $(\Spec S, \Delta_S)$ is log canonical in a neighborhood of $\frak m$.
  As above, it suffices to show that $(Y, \Delta_Y + X)$ is log canonical in a neighborhood of $X$, and this follows  from  inversion of adjunction on log canonicity since  the pair $(X, {\Delta_Y}|_{X}) = (X, \Delta)$
is log canonical \cite{KawakitaInversion}.
\end{proof}

\section{Properties of globally $F$-regular varieties}

In this section, we gather some results about globally $F$-regular varieties.
We begin with some general results, then treat projective $F$-regular varieties, proving a Kawamata-Viehweg vanishing theorem for them. We also gather a collection of (mostly well-known) examples of globally $F$-regular (log Fano) varieties.  Finally we consider the affine situation, and point out some applications to  tight closure theory.

\subsection{General properties}

First, we point out an openness result on $F$-regular pairs.

\begin{corollary}\label{CorGlobalFRegularityIsOpen}
Let $(X, \Delta)$ be a globally $F$-regular pair. Then for any effective divisor $D$ on $X$,
the pair $(X, \Delta+ \delta D)$ is globally $F$-regular for sufficiently small $\delta > 0$.
\end{corollary}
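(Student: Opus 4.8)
The plan is to deduce this from the ``convexity'' Lemma \ref{CorollaryCombinationOfFRegularAndFPureImpliesFRegular}, specifically part (iii), which already asserts that a globally $F$-regular pair may be perturbed by a globally sharply $F$-split ``direction.'' So the only real content is to exhibit, for some positive rational multiple of the given effective divisor $D$, a global sharp $F$-splitting of the corresponding perturbation of $(X,\Delta)$, and then to feed this into that lemma together with the globally $F$-regular pair $(X,\Delta)$ itself.

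First I would apply the hypothesis that $(X,\Delta)$ is globally $F$-regular to the effective divisor $D$: there is an $e > 0$ such that the natural map
\[
\cO_X \rightarrow F^e_* \cO_X(\lceil (p^e - 1)\Delta \rceil + D)
\]
splits. Since $D$ is an integral divisor, $\lceil (p^e - 1)\Delta \rceil + D = \lceil (p^e - 1)\Delta + D \rceil = \lceil (p^e - 1)(\Delta + \tfrac{1}{p^e-1}D) \rceil$, so this very same splitting witnesses that the pair $(X, \Delta + \tfrac{1}{p^e - 1}D)$ is globally sharply $F$-split.

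Next I would invoke Lemma \ref{CorollaryCombinationOfFRegularAndFPureImpliesFRegular}(iii) with $B = \Delta$ and boundary direction $\tfrac{1}{p^e-1}D$: since $(X,\Delta)$ is globally $F$-regular and $(X, \Delta + \tfrac{1}{p^e-1}D)$ is globally sharply $F$-split, the pair $(X, \Delta + \delta\cdot\tfrac{1}{p^e-1}D)$ is globally $F$-regular for every rational $0 < \delta < 1$. Rewriting the perturbation as $\Delta + \delta' D$ with $\delta' = \delta/(p^e-1)$, this is exactly the claimed assertion for all rational $0 < \delta' < \tfrac{1}{p^e-1}$, hence for all sufficiently small $\delta' > 0$.

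I do not expect any genuine obstacle here; the result is a near-immediate corollary of material already in hand. The only mildly delicate point — ensuring that the ``free'' splitting along $D$ is literally a sharp $F$-splitting for the perturbed boundary — is handled by the elementary identity $\lceil x + m \rceil = \lceil x \rceil + m$ for integral $m$, exactly the manipulation used in the proofs of Lemma \ref{simple} and Corollary \ref{alternate}. One might also remark that an arbitrary effective $\bQ$-divisor $D$ can be replaced by an integral divisor with the same support after clearing denominators, so the statement in fact holds verbatim for effective $\bQ$-divisors $D$ as well.
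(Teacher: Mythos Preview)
Your proof is correct and follows essentially the same route as the paper's own argument: use the defining splitting for the effective divisor $D$ to see that $(X,\Delta+\tfrac{1}{p^e-1}D)$ is globally sharply $F$-split, then apply Lemma~\ref{CorollaryCombinationOfFRegularAndFPureImpliesFRegular}(iii). Your write-up simply spells out the ceiling identity and the translation to small $\delta'$ more explicitly than the paper does.
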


\begin{proof}
Because $(X, \Delta)$ is  globally $F$-regular, the splitting of
$$
\cO_X \rightarrow F^e\cO_X(\lceil p^e - 1 \Delta \rceil + D)
$$
shows that $(X, \Delta + \frac{1}{p^e -1}D)$ is globally sharply $F$-split.
Now Lemma \ref{CorollaryCombinationOfFRegularAndFPureImpliesFRegular} (iii) gives the desired conclusion.
\end{proof}

\begin{remark}[Cautionary Remark]  If $(X, \Delta_t)$ is a collection of globally $F$-regular pairs approaching (because the coefficients  are converging) some pair $(X, \Delta)$, it is not necessarily the case that the limit pair $(X, \Delta)$ is globally sharply $F$-split.  For example, if  $X = \Spec R$ and  $D$ is a Cartier divisor  whose  $F$-pure threshold $c$ of $(X, D)$ is a rational number with $p$ in the denominator, then $(X, cD)$ is not sharply $F$-pure.  See \cite{SchwedeSharpTestElements} for additional discussion.
\end{remark}

The next result shows that the property of global $F$-regularity is preserved under many common types of morphisms. For a partial converse, see \cite[Proposition 1.4]{HaraWatanabeYoshidaReesAlgebrasOfFRegularType}.

\begin{proposition}
\label{PropositionMehtaRamanathanVersion}
Suppose that $f : X \rightarrow Y$ is a morphism of normal varieties over an F-finite field of prime characteristic, and let $W$ be any open set of $X$ which maps into the smooth locus $U$ of $Y$ under $f$. Let $\Delta_Y$ be an effective $\bQ$-divisor on $Y$ such that no coefficient of $\Delta_Y$ has $p$ in the denominator, and let  $\Delta_W$ be its  pullback  to $W$.
  Then if the natural map $\cO_U \rightarrow f_* \cO_W$ splits and  if $(W, \Delta_W)$ is globally $F$-regular, then so is $(Y, \Delta_Y)$.
\end{proposition}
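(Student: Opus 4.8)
The plan is to verify the splitting condition of Definition \ref{def1}(i) for $(Y,\Delta_Y)$ directly: first I would reduce to the smooth locus $U$ of $Y$, and then transport a suitable splitting from $W$ down to $U$ along $g:=f|_W$, using the projection formula together with the hypothesized splitting $\rho$ of the structure map $\iota:\cO_U\to g_*\cO_W$.

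For the reduction to $U$: since $Y$ is normal, $Y\setminus U$ has codimension at least two, so for any effective Weil divisor $E$ on $Y$ the sheaf $\sHom_{\cO_Y}(F^e_*\cO_Y(E),\cO_Y)$ is reflexive and its sections over $U$ agree with its sections over $Y$ (see Subsection \ref{reflex} and Remark \ref{reflexeasy}). Hence a homomorphism $F^e_*\cO_Y(E)|_U\to\cO_U$ sending $1$ to $1$ extends uniquely to a homomorphism $F^e_*\cO_Y(E)\to\cO_Y$, and because $\cO_Y$ is torsion free this extension again sends $1$ to $1$. So it suffices to produce, for every effective divisor $D$ on $Y$, an integer $e>0$ for which the natural map $\cO_U\to F^e_*\cO_U\left((\lceil (p^e-1)\Delta_Y\rceil+D)|_U\right)$ splits.

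Next I would fix such a $D$, and put $\Delta_W:=g^*(\Delta_Y|_U)$ (legitimate since $U$ is smooth; this is the $\Delta_W$ of the statement, and the hypothesis on denominators of $\Delta_Y$ makes its round-ups pull back compatibly) and $D_W:=g^*(D|_U)$. As $(W,\Delta_W)$ is globally $F$-regular, there is an $e>0$ and a splitting of $\cO_W\to F^e_*\cO_W(\lceil(p^e-1)\Delta_W\rceil+D_W)$; and since $\lceil(p^e-1)\Delta_W\rceil+D_W\le g^*A_U$, where $A_U:=(\lceil(p^e-1)\Delta_Y\rceil+D)|_U$ is an integral effective divisor on $U$, the technique of Lemma \ref{simple} upgrades this to a splitting $\sigma$ of the natural map $\cO_W\to F^e_*\cO_W(g^*A_U)\cong F^e_*(\cO_W\otimes g^*\cO_U(A_U))$. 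Now apply $g_*$: Frobenius commutes with $g$, so $g_*F^e_*=F^e_*g_*$, and the projection formula for the invertible sheaf $\cO_U(A_U)$ turns the natural map into a map $\alpha:g_*\cO_W\to F^e_*\left((g_*\cO_W)\otimes\cO_U(A_U)\right)$ which $g_*\sigma$ splits. Writing $\nu:\cO_U\to F^e_*\cO_U(A_U)$ for the natural map, naturality of these constructions with respect to $\iota$ gives $F^e_*(\iota\otimes\mathrm{id})\circ\nu=\alpha\circ\iota$, so $\tau:=\rho\circ(g_*\sigma)\circ F^e_*(\iota\otimes\mathrm{id})$ satisfies $\tau\circ\nu=\rho\circ(g_*\sigma)\circ\alpha\circ\iota=\rho\circ\iota=\mathrm{id}_{\cO_U}$. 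Thus $\nu$ splits; by the reduction above this yields a splitting of $\cO_Y\to F^e_*\cO_Y(\lceil(p^e-1)\Delta_Y\rceil+D)$, and since $D$ was arbitrary, $(Y,\Delta_Y)$ is globally $F$-regular.

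The main obstacle is the naturality bookkeeping in the last step: one must check carefully that pushing the \emph{natural} map $\cO_W\to F^e_*\cO_W(g^*A_U)$ forward along $g$, under the projection-formula identification $g_*(\cO_W\otimes g^*\cO_U(A_U))\cong(g_*\cO_W)\otimes\cO_U(A_U)$, really produces the \emph{natural} map $\alpha$ attached to the $\cO_U$-algebra $g_*\cO_W$, and that $\alpha$ is compatible with $\iota$ and with $\nu$ as asserted; only then does splitting the ``$g_*\cO_W$-version'' force a splitting of the ``$\cO_U$-version.'' Everything else — the reflexive extension in the reduction to $U$, and the enlargement of the divisor via Lemma \ref{simple} — is routine. (An alternative would be to invoke the criterion of Theorem \ref{LemmaSplitAlongAndComplementDivisorImpliesFRegular}, but since $Y\setminus U$ has codimension $\ge 2$ rather than being a divisor, the direct approach above seems cleaner.)
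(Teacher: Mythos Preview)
Your proof is correct and follows essentially the same strategy as the paper's: reduce to the smooth locus $U$ via the codimension-two reflexive extension, pull a splitting on $W$ down to $U$ using the projection formula, and postcompose with the given splitting $\rho:g_*\cO_W\to\cO_U$. The only cosmetic difference is that the paper invokes the denominator hypothesis to choose $e$ so that $(p^e-1)\Delta_U$ is already integral (avoiding round-ups entirely), whereas you carry the ceilings and use the inequality $\lceil g^*B\rceil\le g^*\lceil B\rceil$ together with Lemma~\ref{simple}; your route actually shows the hypothesis is not strictly needed for this step.
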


The main interest in Proposition \ref{PropositionMehtaRamanathanVersion} is when
$\Delta_Y$ (and $\Delta_W$) are zero.  Then the statement becomes
\begin{corollary}\label{corMR} Suppose that $X \overset{f}\rightarrow Y$ is a morphism of normal varieties over an F-finite field of prime characteristic. If
the natural map $\cO_Y \rightarrow f_* \cO_X$ splits, and  $X$ is globally F-regular,  then  $Y$ is also globally F-regular.
\end{corollary}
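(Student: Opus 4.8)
The plan is to reduce Proposition \ref{PropositionMehtaRamanathanVersion} to an application of Theorem \ref{LemmaSplitAlongAndComplementDivisorImpliesFRegular}, so that we need only verify global sharp $F$-splitting of a suitable pair on $Y$ together with global $F$-regularity away from a chosen divisor. First I would fix an effective divisor $C$ on $Y$ whose support contains the non-smooth locus of $Y$ and the support of $\Delta_Y$, and such that on $Y\setminus C$ things are already globally $F$-regular; here I can shrink $U$ and work on the open complement, using that the original $(W,\Delta_W)$ is globally $F$-regular and restriction of globally $F$-regular pairs to opens stays globally $F$-regular (this is essentially the observation in Remark \ref{opencondition} combined with Lemma \ref{simple}). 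So condition (ii) of Theorem \ref{LemmaSplitAlongAndComplementDivisorImpliesFRegular} will be automatic, and the whole content is to produce, for some $e>0$, a splitting of
\[
\cO_Y \rightarrow F^e_*\cO_Y(\lceil (p^e-1)\Delta_Y + C\rceil).
\]

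The key step is to transport a splitting from $W$ to $Y$ using the splitting $\cO_U \rightarrow f_*\cO_W$. Since $(W,\Delta_W)$ is globally $F$-regular, for the pullback of $C$ to $W$ — call it $C_W$ — there is an $e>0$ with a splitting $\sigma\colon F^e_*\cO_W(\lceil (p^e-1)\Delta_W\rceil + C_W)\rightarrow \cO_W$ sending $1$ to $1$. Applying $f_*$ and precomposing with the natural map, and using that $f_*$ commutes with $F^e_*$, I get an $\cO_U$-linear map $F^e_*\cO_U\big(\text{(pushforward of that divisor)}\big)\rightarrow f_*\cO_W$; composing with the given splitting $f_*\cO_W\rightarrow \cO_U$ yields a map landing in $\cO_U$ that still sends $1$ to $1$. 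The point of the hypothesis that no coefficient of $\Delta_Y$ has $p$ in its denominator is that, after taking $e$ divisible enough, $(p^e-1)\Delta_W$ is integral and equals the pullback of the integral divisor $(p^e-1)\Delta_Y$, so the divisor appearing upstairs is genuinely the pullback of $\lceil(p^e-1)\Delta_Y+C\rceil$ (up to something dominated by a multiple of $C$, absorbed as in the proof of Lemma \ref{simple}). One then checks that the natural map $\cO_Y(\lceil(p^e-1)\Delta_Y+C\rceil)\rightarrow f_*f^*\cO_Y(\cdots)$ together with the constructed map composes to a splitting of $\cO_Y\rightarrow F^e_*\cO_Y(\lceil(p^e-1)\Delta_Y+C\rceil)$ over $U$.

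The main obstacle, and the place requiring care, is the descent from the open set $U\subseteq Y$ back to all of $Y$: the splitting produced lives a priori only over $U$, and I need it to extend to a global section of the reflexive sheaf $\sHom_{\cO_Y}\big(F^e_*\cO_Y(\lceil(p^e-1)\Delta_Y+C\rceil),\cO_Y\big)$. This is handled exactly as in the proof of Theorem \ref{LemmaSplitAlongAndComplementDivisorImpliesFRegular}: twist by $\cO_Y(mC)$ for $m\gg 0$ to kill the obstruction to extending the homomorphism across $C$ (using \cite[Chapter II, Lemma 5.14(b)]{Hartshorne}), extend by reflexivity over all of $Y$, note the extended map still sends $1$ to $1$, and then feed the resulting data into Theorem \ref{LemmaSplitAlongAndComplementDivisorImpliesFRegular} — whose conclusion absorbs the extra $mC$ by composing the splitting with itself and re-twisting. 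Finally, Corollary \ref{corMR} is the special case $\Delta_Y=0$ (so $\Delta_W=0$), where the hypothesis on denominators is vacuous; I would simply state that it follows immediately.
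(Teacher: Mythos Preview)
Your core observation --- transport a splitting from $W$ to $U$ by pushing forward and composing with the given section $f_*\cO_W \to \cO_U$ --- is exactly the paper's argument. But the paper uses this composition to verify the \emph{definition} of global $F$-regularity for $(U,\Delta_U)$ directly: for an arbitrary effective Cartier divisor $C$ on $U$, pull back to $W$, use global $F$-regularity of $(W,\Delta_W)$ to produce $\psi$, then form
\[
\cO_U \to F^e_*\cO_U((p^e-1)\Delta_U + C) \to f_*F^e_*\cO_W(f^*((p^e-1)\Delta_U + C)) \xrightarrow{f_*\psi} f_*\cO_W \xrightarrow{\phi} \cO_U,
\]
which sends $1$ to $1$. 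Since $Y\setminus U$ has codimension at least two and every sheaf in sight is reflexive, global $F$-regularity of $(U,\Delta_U)$ immediately gives global $F$-regularity of $(Y,\Delta_Y)$. So what you call ``the main obstacle'' --- extending from $U$ to $Y$ --- is no obstacle at all, and the twisting-by-$mC$ machinery you borrow from Theorem~\ref{LemmaSplitAlongAndComplementDivisorImpliesFRegular} is never needed.

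Your detour through Theorem~\ref{LemmaSplitAlongAndComplementDivisorImpliesFRegular} also introduces a genuine gap. Condition~(ii) there requires $(Y\setminus C,\Delta_Y|_{Y\setminus C})$ to be globally $F$-regular, but your justification --- ``restriction of globally $F$-regular pairs to opens stays globally $F$-regular'' applied to $(W,\Delta_W)$ --- yields a globally $F$-regular pair on an open subset of $W$, not on $Y\setminus C$. Passing from one to the other is precisely the proposition you are trying to prove, so the argument is circular as written. (Remark~\ref{opencondition} concerns the \emph{local} $F$-regular locus and does not help here.) You could try to salvage this by arranging $Y\setminus C$ to be affine and smooth with $\Delta_Y$ supported in $C$, so that smoothness gives local $=$ global $F$-regularity there; but this imposes extra hypotheses and is in any case unnecessary, since the direct composition above already handles every effective $C$ on $U$ in one stroke. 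For Corollary~\ref{corMR} itself you are right that it is just the case $\Delta_Y=0$.
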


There are many common situations where the hypotheses of Corollary
\ref{corMR} hold. Indeed, the following two situations have been considered before:
\begin{itemize}
\item[(1)]
If $R \rightarrow S$ is a split inclusion of  domains, and $\Spec S$ is  globally $F$-regular, then so is $\Spec R$. This is essentially \cite[Theorem 3.1(e)]{HochsterHunekeTightClosureAndStrongFRegularity}.
\item[(2)]
If $X \overset{f}\rightarrow Y$ is a  proper morphism of normal varieties with $\cO_Y = f_*\cO_X$, and $X$ is globally $F$-regular, then so is $Y$.  This generalization to global $F$-regularity (from global $F$-splitting) of
\cite[Proposition 4]{MehtaRamanathanFrobeniusSplittingAndCohomologyVanishing} was observed in
\cite[Proposition 1.2(2)]{HaraWatanabeYoshidaReesAlgebrasOfFRegularType}.
\end{itemize}

In fact, the proof of Proposition \ref{PropositionMehtaRamanathanVersion} is essentially a jazzed up combination of the  arguments used in situations (1) and (2) above:

 \begin{proof}[Proof of Proposition \ref{PropositionMehtaRamanathanVersion}]
First note that if $(U, \Delta_U)$ is globally $F$-regular (respectively globally sharply $F$-split) then so is $(Y, \Delta_Y)$ because $Y \setminus U$ has codimension 2.  We define $\Delta_U$ to be $\Delta_Y|_U$.
Set $\phi : f_* \cO_W \rightarrow \cO_U$ to be a splitting of $\cO_U \rightarrow f_* \cO_W$.  Choose any effective divisor $C$ on $U$ (note that $C$ is Cartier since $U$ is smooth).  By assumption, there exists an $e > 0$ so that $(p^e - 1) \Delta_U$ is integral (and also Cartier).   Since $(W, \Delta_W)$ is globally $F$-regular (making $e$ larger if needed), we can choose a map
$ F^e_* \cO_W(f^*((p^e - 1)\Delta_U) + f^* C ) \overset{\psi} \rightarrow \cO_W$
such that the composition
$$
\cO_W \rightarrow F^e_* \cO_W(f^*((p^e - 1)\Delta_U) + f^* C)
\overset{\psi}\rightarrow  \cO_W
$$
is the identity map of $\cO_W$.  We have the following composition by the projection formula:
\[
\xymatrix@C=20pt{
\cO_U \ar[r] & F^e_* \cO_U((p^e - 1)\Delta_U + C) \ar[r] & f_* F^e_* \cO_W(f^*((p^e - 1)\Delta_U + C)) \ar[r]^-{f_* \psi} & f_* \cO_W \ar[r]^-{\phi} & \cO_U.
}
\]
It is easily seen to be the identity since the composition sends the global section $1$ to $1$.
To do the globally sharply $F$-split case, set $C = 0$.
\end{proof}

\begin{remark}
One could ask if a characteristic zero analog of (2) above holds.  That is, suppose that $f$ has connected fibers (or even that $\cO_Y \rightarrow f_* \cO_X$ splits) and there exists a $\Delta_X$ on $X$ such that $(X, \Delta_X)$ is log Fano.  Does there exist a divisor $\Delta_Y$ on $Y$ such that $(Y, \Delta_Y)$ is also log Fano?

We will show this if $f$ has connected fibers.  Our first step is to show that if $(X, \Delta_X)$ is log Fano, then for every Cartier divisor $D$ on $X$, there exists $\epsilon > 0$ and an effective $\bQ$-divisor $\Delta_D$ on $X$ such that $(X, \Delta_D)$ is Kawamata log terminal and $-(K_X - \Delta_D) \sim_{\bQ} \epsilon D$.  To construct $\Delta_D$, choose $\epsilon > 0$ such that $-K_X - \Delta_X - \epsilon D \sim_{\bQ} \delta A$ where $A$ is a general section of a very ample divisor and $\delta < 1$.   Set $\Delta_D = \Delta_X + \delta A$ and note that $(X, \Delta_D)$ is Kawamata log terminal (since $A$ was general).  Now choose $C$ to be an anti-ample divisor on $Y$ and let $D = f^* C$.  Choose $\epsilon > 0$ and $\Delta_D$ as above and apply \cite[Theorem 0.2]{AmbroTheModuliBDivisorOfAnLCFibration}.  Also compare with \cite{WisniewskiOnContractionsOfExtremalRaysOfFanos}.
\end{remark}

\begin{example} [Blowups of the projective plane]
Let $X_n$ be the blow-up of the projective plane at $n$  general points (over $\C$). Then $X$ has globally $F$-regular type if and only if $n \leq 8$.  Indeed, if $n \leq 8 $, then $-K_{X_n}$ is ample, so $X_n$ has globally $F$-regular type.  On the other hand, if $n = 9$, then $-K_{X_9} $ is not big, as can be verified by taking taking the anticanonical divisor on $\mathbb P^2$ to be the unique cubic through the nine points.  So $X_9$ is not globally $F$-regular, and nor is any $X_{n}$ for $n \geq 10$ by Corollary \ref{corMR}(1). \end{example}

\begin{example}[Ruled Surfaces] \label{ruled}  Let $X  \overset{\pi}\rightarrow C $ be a ruled surface over a smooth projective curve of genus $g$ (over a field of prime characteristic).  If  $g \geq 1$, then $X$ can not be globally $F$-regular.  Indeed, since $\cO_C \rightarrow \pi_*\cO_X$ is an isomorphism (that is, $X \rightarrow C$ has connected fibers), globally $F$-regularity of $X$ would force global $F$-regularity for $C$, by Corollary \ref{corMR}.  But of course, the only globally F-regular curve is $\bP^1$.  On the other hand, every ruled surface over $\bP^1$ is globally F-regular, because these are all  toric varieties;  see \cite{SmithGloballyFRegular}. In conclusion, a ruled surface is globally F-regular if and only if the base is rational.

The ample and effective cones for ruled surfaces can be worked out explicitly; see \cite[p. 70]{LazarsfeldPositivity1}.  Doing so, one can show that there are many ruled surfaces for which $-K_X$ is big, but for which there is no $\Delta$ with $(X, \Delta)$  ``controlled"  such that $-K_X - \Delta$ is ample. One interesting case is when $X = \bP(\cO_C \oplus \cO_C(-Q))$ is ruled over an elliptic curve, for some effective divisor $Q$ on $C$. In this case, we can find $\Delta$ such that
 $(X, \Delta)$ is log canonical and $-K_X - \Delta$ is big and nef.  Indeed, one can take $\Delta$ to be the class of a section of $X \rightarrow C$. On the other hand, using the explicit description of the ample and effective cones for such ruled surfaces, one can directly see there is no $\Delta$ such that $-K_X - \Delta$ is ample and $(X, \Delta)$ is log canonical.
\end{example}

\subsection{Kawamata-Viehweg vanishing for globally $F$-regular pairs}
We prove a version of the Kawamata-Viehweg vanishing for globally $F$-regular pairs.  The proof is similar to \cite[Corollary 4.4]{SmithGloballyFRegular}.

\begin{theorem}
\label{TheoremKawamataViehwegForGloballyFRegular}
Let $X$ be a normal projective variety over a field of prime characteristic.  Let $L$ be a Cartier divisor on $X$ such that $L \sim_{\bQ} M + \Delta$, where $M$ is a nef and big $\bQ$-divisor and the pair $(X, \Delta)$ is  globally $F$-regular.
Then $H^i(X, \cO_X(-L)) = 0$ for $i < \dim X$.
\end{theorem}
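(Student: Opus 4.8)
The plan is to reduce, via Serre duality, to a Kodaira-type vanishing statement for $\omega_X\otimes\cO_X(L)$, and then to establish that by dualizing a Frobenius splitting and applying Serre--Fujita vanishing --- essentially the strategy of \cite[Corollary 4.4]{SmithGloballyFRegular}.  Since $(X,\Delta)$ is globally $F$-regular, $X$ is Cohen--Macaulay; as $X$ is projective over a field and $L$ is Cartier, Serre duality gives $H^i(X,\cO_X(-L))\cong H^{d-i}\bigl(X,\cO_X(K_X+L)\bigr)^{\vee}$ with $d=\dim X$.  Thus it suffices to prove $H^{j}(X,\cO_X(K_X+L))=0$ for all $j>0$.

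I would first replace the nef and big divisor $M$ by an honestly ample one.  Since $M$ is big, Kodaira's lemma gives $M\sim_{\bQ}A_0+E_0$ with $A_0$ ample and $E_0\geq 0$; since $M$ is also nef, for every $k>0$ we have $M\sim_{\bQ}\bigl((1-\tfrac{1}{k})M+\tfrac{1}{k}A_0\bigr)+\tfrac{1}{k}E_0$ with the first summand ample.  Hence $L\sim_{\bQ}A+\bigl(\Delta+\tfrac{1}{k}E_0\bigr)$ with $A$ ample, and by Corollary \ref{CorGlobalFRegularityIsOpen} the pair $\bigl(X,\Delta+\tfrac{1}{k}E_0\bigr)$ is still globally $F$-regular once $k\gg0$.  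After absorbing $\tfrac{1}{k}E_0$ into $\Delta$, and after the denominator-clearing adjustments furnished by Proposition \ref{CorCanReplaceBByNice}, I may assume: $L\sim_{\bQ}A+\Delta$ with $A$ ample and $(X,\Delta)$ globally $F$-regular, and there is $e_0>0$ so that for all $e\in e_0\bN$ the $\bQ$-divisor $(p^e-1)\Delta$ is integral and --- since global $F$-regularity implies global sharp $F$-splitting --- the natural map $\cO_X\to F^e_*\cO_X\bigl((p^e-1)\Delta\bigr)$ splits.

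The core is the dualization.  Apply $\sHom_{\cO_X}(\blank,\omega_X)$ to this splitting and invoke the Frobenius form of Grothendieck duality (\S\ref{key}); this yields a split surjection $F^e_*\cO_X\bigl(K_X-(p^e-1)\Delta\bigr)\twoheadrightarrow\omega_X$.  Tensoring with the invertible sheaf $\cO_X(L)$ and applying the projection formula for $F^e$ (done on the regular locus and extended to reflexive sheaves, cf.\ Remark \ref{reflexeasy}), and using $p^eL-(p^e-1)\Delta\sim L+(p^e-1)A$, we get a surjection
\[
F^e_*\cO_X\bigl(K_X+L+(p^e-1)A\bigr)\ \twoheadrightarrow\ \cO_X(K_X+L).
\]
Since Frobenius is affine, taking cohomology realizes $H^{j}(X,\cO_X(K_X+L))$ as a quotient of $H^{j}\bigl(X,\cO_X(K_X+L+(p^e-1)A)\bigr)$ for every $j$.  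Now $A$ is ample, so for $e\gg0$ the divisor $L+(p^e-1)A$ is ample and these divisors march off to infinity along the ray spanned by $A$; hence Serre--Fujita vanishing (applied to the coherent sheaf $\cO_X(K_X+L)$) forces $H^{j}\bigl(X,\cO_X(K_X+L+(p^e-1)A)\bigr)=0$ for all $j>0$ and all $e\gg0$.  Therefore $H^{j}(X,\cO_X(K_X+L))=0$ for $j>0$, and by Serre duality we are done.

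The subtle step is the reduction in the second paragraph: trading $M$ for an ample divisor forces us to enlarge $\Delta$, and the argument only works because the enlarged pair is still globally $F$-regular --- which is exactly the content of Corollary \ref{CorGlobalFRegularityIsOpen}.  This is where global $F$-regularity, rather than merely global sharp $F$-splitting, is essential: when $M$ is only nef and big one cannot start the dualization from a globally sharply $F$-split pair, since the small perturbation needed to make the positive part ample need not preserve sharp $F$-splitting (compare the cautionary remark following Corollary \ref{CorGlobalFRegularityIsOpen}).  The remaining ingredients --- bookkeeping with round-ups, with the arithmetic progression of exponents $e$, and with the passage between reflexive and invertible sheaves on the regular locus --- are the routine manipulations used throughout this paper.
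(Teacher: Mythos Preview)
Your argument is correct and takes a genuinely different route from the paper's. The paper never passes through Serre duality or Grothendieck duality for Frobenius: it tensors the splitting $\cO_X \to F^{e+f}_*\cO_X(\lceil(p^{e+f}-1)\Delta\rceil + D + E)$ (with $D$ coming from Kodaira's lemma for $M$ and $E\sim p^fL$) directly by $\cO_X(-L)$, obtaining an \emph{injection} of $H^i(X,\cO_X(-L))$ into $H^i$ of an anti-ample sheaf, and then cites the earlier ample-case vanishing \cite[Corollary~4.4]{SmithGloballyFRegular}. You instead Serre-dualize first, then dualize the splitting via $\sHom_{\cO_X}(\blank,\omega_X)$ to produce a split \emph{surjection} onto $\cO_X(K_X+L)$, and invoke Serre vanishing directly. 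Your route is more self-contained (no appeal to a prior vanishing theorem) and isolates precisely where global $F$-regularity, as opposed to mere sharp $F$-splitting, enters: only in the Kodaira reduction via Corollary~\ref{CorGlobalFRegularityIsOpen}. The paper's route instead folds the Kodaira-type reduction into the choice of $D$ and $E$ inside a single global-$F$-regularity splitting.

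One small technical caveat: invoking Proposition~\ref{CorCanReplaceBByNice} to make $(p^e-1)\Delta$ integral can destroy the $\bQ$-Cartier property of $\Delta$ (hence of $A=L-\Delta$), which you need implicitly when you apply Serre vanishing to $\cO_X(K_X+L)\otimes\cO_X((p^e-1)A)$. It is safer to skip that proposition. After the Kodaira reduction you already have $\Delta$ $\bQ$-Cartier; let $m$ be its Cartier index (so $mA$ is ample Cartier), keep the round-up $\lceil(p^e-1)\Delta\rceil$, and write $p^e-1=qm+r$ with $0\le r<m$. Then
\[
\cO_X\bigl(K_X+p^eL-\lceil(p^e-1)\Delta\rceil\bigr)\ \cong\ \cO_X\bigl(K_X+(r{+}1)L-\lceil r\Delta\rceil\bigr)\otimes\cO_X(mA)^{\otimes q},
\]
and since only finitely many values of $r$ occur while $q\to\infty$ along your arithmetic progression of $e$'s, ordinary Serre vanishing finishes the argument.
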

\begin{proof}
Because $L$ is big, we can fix $f \gg 0$ so that   there exists an  effective $E$ linearly equivalent to $p^f L$. By taking $f$ larger if necessary, we can also assume that for all large and sufficiently divisible $e$,
\begin{itemize}
\item[(1)] $p^f(p^e - 1) \Delta$ and $p^f(p^e - 1)M$ are integral,
\item[(2)]  $\cO_X(p^f(p^e - 1)L) \cong \cO_X(p^f(p^e - 1)(M + \Delta))$.
\end{itemize}

Since $M$ is nef and big, there exists an effective divisor $D$ such that $nM - D$ is ample for all $n \gg 0$; see \cite[Cor 2.2.7]{LazarsfeldPositivity1}.  Because $(X, \Delta)$ is globally $F$-regular, for all sufficiently large integers $g$, the map
\[
\cO_X \rightarrow F^g_* \cO_X( \lceil (p^g - 1)\Delta \rceil + D + E)
\]
splits.  By choosing $g$ large enough, we may assume that $g = f + e$ where $f$ is the fixed integer above and $e > 0$ is such that both (1) and (2) are satisfied above. Also, we can assume that  $p^f(p^e - 1)M - D$ is ample.
Therefore, the map
\[
\cO_X \rightarrow F^{e+f}_* \cO_X( p^f(p^e - 1)\Delta + D + E)
\]
splits since $p^f(p^e -1)\Delta \leq \lceil (p^{e+f} - 1)\Delta \rceil$. Tensoring (on the smooth locus, and extending as usual) with $\cO_X(-L)$ and taking cohomology, we have a splitting of
  the map
\[
H^i(X, \cO_X(-L)) \rightarrow H^i(X, F^{e+f}_* \cO_X( -p^{e+f} L + p^f(p^e - 1)\Delta + D + E)).
\] In particular, this map on cohomology is injective for all sufficiently large and divisible $e$.

However,
\[
\begin{split}
-p^{e+f} L + p^f(p^e - 1)\Delta + D + E =\\
 -(p^{e+f} - p^f)L - p^f L + p^f(p^e - 1)\Delta + D + E \sim \\
(-p^f(p^e - 1)M - p^f(p^e - 1)\Delta) + p^f(p^e -1)\Delta + D + \left( E-p^f L \right) \sim\\
 -p^f(p^e - 1)M + D
\end{split}
\]
which is anti-ample.  Therefore, $H^i(X, \cO_X( -p^{e+f} L + p^f(p^e - 1)\Delta + D + E))$ vanishes for $i < \dim X$ since $X$ is globally $F$-regular, by  \cite[Corollary 4.4]{SmithGloballyFRegular}, see also \cite{BrionKumarFrobeniusSplitting}.  Because of the injection above, it follows that $H^i(X, \cO_X(-L))$ vanishes, and the proof is complete.
\end{proof}

\subsection{Local consequences}

Theorem \ref{MainExistenceTheorem} has an interesting local interpretation.

\begin{corollary}
\label{CorLocalExistenceTheorem}
Suppose $R$ is normal, $F$-finite with $X = \Spec R$.  Then $X$ is $F$-pure (respectively strongly $F$-regular) if and only if there exists an effective $\bQ$-divisor $\Delta$ such that the pair $(R, \Delta)$ is $F$-pure (respectively strongly $F$-regular) and $K_R + \Delta$ is $\bQ$-Cartier with index not divisible by $p$.
\end{corollary}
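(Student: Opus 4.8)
The plan is to deduce this from the main existence theorem, Theorem~\ref{MainExistenceTheorem}, using throughout the observation recorded after Definition~\ref{def1} that on an affine scheme a pair is globally $F$-regular (resp.\ globally sharply $F$-split) if and only if it is locally so, which by Remark~\ref{related} means exactly that $(R,\Delta)$ is strongly $F$-regular (resp.\ sharply $F$-pure) in the sense of Hara--Watanabe and Schwede. Thus ``$X=\Spec R$ is strongly $F$-regular'' is literally ``$(X,0)$ is globally $F$-regular,'' and similarly on the $F$-pure side; the real content of the corollary is the existence of $\Delta$ with the stated numerical properties.

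For the forward implication, in the strongly $F$-regular case, I would apply the version of Theorem~\ref{MainExistenceTheorem} recorded in the remark immediately following its proof, which drops the quasiprojectivity hypothesis: with $B=0$ and with the auxiliary effective divisor $D$ there taken to be a principal divisor $\divisor f$ (or even $0$), one gets an effective $\bQ$-divisor $\Delta$ with $(X,\Delta)$ globally $F$-regular and $-K_X-\Delta\sim_{\bQ}\epsilon\,\divisor f$, so that $K_X+\Delta$ is $\bQ$-Cartier, indeed $\bQ$-principal. The statement about the index is then the content of the remark on denominators following Theorem~\ref{MainExistenceTheorem}: the $\Delta$ produced by the construction satisfies $(p^e-1)(K_X+\Delta)$ integral Cartier for some $e>0$, whence the index of $K_X+\Delta$ divides $p^e-1$ and is not divisible by $p$. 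The $F$-pure case is identical with Theorem~\ref{MainExistenceTheorem}(ii) in place of~(i): $(X,0)$ globally sharply $F$-split yields $\Delta$ with $(X,\Delta)$ globally sharply $F$-split and $K_X+\Delta$ $\bQ$-trivial, i.e.\ $\bQ$-linearly equivalent to $0$ and so $\bQ$-Cartier, and the same denominators remark gives the bound on the index.

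For the reverse implication no hypothesis on $K_R+\Delta$ is needed: if $(R,\Delta)$ is strongly $F$-regular (resp.\ sharply $F$-pure) for some effective $\bQ$-divisor $\Delta$, then since $0\le\Delta$, Lemma~\ref{simple} shows $(X,0)$ is globally $F$-regular (resp.\ globally sharply $F$-split), that is, $R$ is strongly $F$-regular (resp.\ $F$-pure). I do not expect any real obstacle: the mathematical content is entirely inside Theorem~\ref{MainExistenceTheorem}, and what remains is the translation dictionary between ``global on an affine scheme'' and ``strong $F$-regularity / $F$-purity of a pair'' together with the elementary point that choosing a principal auxiliary divisor forces $-K_X-\Delta$ to be $\bQ$-Cartier. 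The only technicality deserving a remark is that Theorem~\ref{MainExistenceTheorem} is phrased for schemes quasiprojective over an $F$-finite local ring with a dualizing complex, whereas the corollary assumes only that $R$ is normal and $F$-finite; this is harmless, since under the standing hypothesis that $\omega_R$ exists (Section~\ref{canonical}) the duality argument of Paragraph~\ref{key} --- the one ingredient actually used --- applies verbatim on $\Spec R$, exactly as the non-quasiprojective remark after Theorem~\ref{MainExistenceTheorem} notes.
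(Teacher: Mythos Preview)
Your argument is correct and, like the paper's, deduces the corollary from Theorem~\ref{MainExistenceTheorem}. The paper's proof is a single line: on an affine scheme every Cartier divisor is ample, so the conclusion of Theorem~\ref{MainExistenceTheorem}(i) that $-K_X-\Delta$ is ample is, in the affine case, nothing more than the statement that $K_X+\Delta$ is $\bQ$-Cartier (with the index claim coming from the same denominators remark you cite). Your route via the non-quasiprojective remark, choosing a principal auxiliary divisor $D$ so that $-K_X-\Delta\sim_{\bQ}\epsilon D$ is $\bQ$-principal, reaches the same conclusion and is arguably more careful about the quasiprojectivity hypothesis, but it is more roundabout than simply invoking the affine observation.
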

\begin{proof}
It follows from the global statement if one observes that on an affine variety, every Cartier divisor is ample.
\end{proof}




\section{Further questions and observations}

As we observed, it is not difficult to find examples of log Fano varieties that are not globally $F$-regular, for certain low characteristic.  However, one can ask the following question:

\begin{question}
Suppose that $X$ has globally $F$-regular type, then does there exist an effective divisor $\Delta$ such that $(X, \Delta)$ is log Fano? Similarly, if  $X$ has globally sharply $F$-split type, then is $X$ log Calabi-Yau?
\end{question}

Although naively one might expect this to follow from Theorem \ref{MainExistenceTheorem},
the problem is that the $\Delta$ we construct there is not canonical and will probably depend on the characteristic.   On the other hand, many classes of varieties are known to be $F$-split (respectively, globally $F$-regular).  For example, for projective toric varieties, it is easy to directly construct a (toric) $\Delta$ independent of characteristic such that $K_X + \Delta$ is anti-ample and $(X, \Delta)$ is Kawamata log terminal. Similarly, Schubert varieties are known to be globally $F$-regular, see \cite{LauritzenRabenThomsenGlobalFRegularityOfSchuertVarieties}.  It is  thus  natural  to try to find divisors $\Delta$ such that $(X, \Delta)$ is log Fano for Schubert varieties, independent of the characteristic (this has recently been done by David Anderson and Alan Stapledon).  There are many other examples of globally F-regular varieties arising naturally in representation theory (see, for example,
 \cite{HeThomsenGeometryOfBTimesBOrbitClosures} or \cite{BrionThomsenFRegularityOfLarge})
and it would interesting to find natural geometric or representation theoretic descriptions of a divisor $\Delta$ showing them to be log Fano independent of the characteristic.



 Turning to local considerations, given \cite[Section 5]{DeFernexHacon}, it is natural to ask whether  a local (but non-affine) analog of Theorem \ref{MainExistenceTheorem} holds. That is,
\begin{question}
If $X$ is locally $F$-regular, does  there exist an effective $\bQ$-divisor $\Delta$ such that
$K_X + \Delta$
 is $\bQ$-Cartier and $(X, \Delta)$ is locally $F$-regular?  \end{question}

A related question in commutative tight closure theory is the following.
\begin{question}
Suppose that $R$ is a normal reduced $F$-finite ring.  Does there exist an effective $\bQ$-divisor $\Delta$ on $\Spec R$ such that $K_R + \Delta$ is $\bQ$-Cartier and $\tld \tau(R) = \tld \tau(R, \Delta)$?
\end{question}

For the definition of $\tilde \tau$ we refer to the paper \cite{LyubeznikSmithCommutationOfTestIdealWithLocalization} for the original boundary-less definition and to \cite{TakagiInterpretationOfMultiplierIdeals} for the general case, also see \cite{HaraTakagiOnAGeneralizationOfTestIdeals}.

The following question is natural in view of Lemma
\ref{CorollaryCombinationOfFRegularAndFPureImpliesFRegular} (and since the corresponding statement in characteristic zero is true).

\begin{question}
Suppose that the pairs $(X, \Delta_1)$ and $(X, \Delta_2)$ are sharply $F$-pure, then is it true that for a dense set of $\epsilon \in (0, 1)$, the pair $(X, \epsilon \Delta_1 + (1 - \epsilon) \Delta_2)$ is sharply $F$-pure?
\end{question}

It follows from Lemma \ref{CorollaryCombinationOfFRegularAndFPureImpliesFRegular} that there are infinitely many such $\epsilon$ and infinitely many of them accumulate around the points ${0}$ and $1$.  However, not all $\epsilon$ satisfy this property as the following example shows:

\begin{example}
Consider the pairs $(\bA^2_{\bF_3}, \Div(xy) )$ and $(\bA^2_{\bF_3}, \Div(x^2 - y^2) )$.  Choose $\epsilon = {1 \over 3}$.  Then I claim that the pair $(\bA^2_{\bF_3}, {2 \over 3} \Div(xy) + {1 \over 3} \Div(x^2 - y^2))$ is not sharply $F$-pure at the origin $\bm = (x, y)$.  To see this, using Fedder's criterion for pairs, see \cite[Theorem 4.1]{SchwedeSharpTestElements} (compare with \cite{FedderFPureRat} and \cite{TakagiInversion}), it is enough to observe that
\[
\begin{split}
(xy)^{\lceil {(2)(3^e - 1) \over 3}  \rceil} (x^2 - y^2)^{\lceil {(3^e - 1) \over 3} \rceil} = (xy)^{(2)3^{e-1}} (x^2 - y^2)^{3^{(e-1)}} =\\
(x^{(4)3^{(e-1)}} y^{(2)3^{(e-1)}} - x^{(2)3^{(e-1)}} y^{(4)3^{(e-1)}})   \subseteq \bm^{[3^e]}
\end{split}
\]
for all $e > 0$.  Of course, if one uses ``$F$-purity'', see \cite{HaraWatanabeFRegFPure} and \cite{TakagiInversion}, instead of ``sharp $F$-purity'', this sort of example does not occur.
\end{example}

Of course, we'd also like to know whether  log Calabi-Yau varieties over a field of characteristic zero have dense sharply F-split type. As we showed in Section 5, this is essentially equivalent to the well-known (difficult) conjecture that log canonical singularities have dense sharply F-pure type, in the special case of standard graded rings.


\providecommand{\bysame}{\leavevmode\hbox to3em{\hrulefill}\thinspace}
\providecommand{\MR}{\relax\ifhmode\unskip\space\fi MR}
\providecommand{\MRhref}[2]{%
  \href{http://www.ams.org/mathscinet-getitem?mr=#1}{#2}
}
\providecommand{\href}[2]{#2}

\end{document}